\documentclass[reqno,12pt]{amsart}
\usepackage{amsmath, amsthm, amssymb,geometry,xcolor,hyperref,enumerate}
\newtheorem{thm}{Theorem}
\newtheorem{prop}[thm]{Proposition}
\newtheorem{lem}[thm]{Lemma}
\newtheorem{cor}[thm]{Corollary}

\newtheorem{rem}[thm]{Remark}

\newtheorem{defn}[thm]{Definition}
\newcommand{\al}{\alpha}
\newcommand{\be}{\beta}

\newcommand{\vep}{\varepsilon}

\newcommand{\RR}{\mathbb{R}}

\newcommand{\sM}{\mathcal{M}}

\newcommand{\sK}{\mathcal{K}}

\newcommand{\sN}{\mathcal{N}}

\numberwithin{equation}{section}
\counterwithin{thm}{section}
\hypersetup{
	colorlinks=true,
	linkcolor=blue,
	citecolor=red,
	urlcolor=magenta
}
\allowdisplaybreaks

	\title[Finiteness and rigidity of optimal destabilizing subvarieties]{Finiteness and rigidity of optimal destabilizing subvarieties for the J-equation} %curves and divisors}
\author{P. Sivaram and Zakarias Sj\"Ostr\"om Dyrefelt}

\address{P. Sivaram \\  Institut for Matematik, Aarhus University, Ny Munkegade 118, 8000, Aarhus C, Denmark. }
\email{sivaram@math.au.dk}

\address{Zakarias Sj\"ostr\"om Dyrefelt \\ Institut for Matematik, Aarhus University, Ny Munkegade 118, 8000, Aarhus C, Denmark. }
\email{dyrefelt@math.au.dk}

\begin{document}

\begin{abstract}
%We prove finiteness of optimally destabilizing subvarieties under weaker conditions than previously known. 
We prove that when the J-equation does not admit a smooth solution, there is always only a finite number of obstructing optimally destabilizing curves and divisors on compact Kähler manifolds of any dimension. For threefolds, this establishes the unconditional finiteness of all optimally destabilizing subvarieties. We moreover characterize the union of these optimal subvarieties, proving that they are contained in the locus where a natural continuity path loses local smooth compactness. Finally, to address subvarieties that are neither curves nor divisors, we prove a rigidity criterion which prevents optimal cycles from moving in compact one-parameter families, with further consequences for their deformation and symmetry properties.
%with consequences for Chow rigidity and actions of the connected automorphism group. 
\iffalse
We study subvarieties which attain the stability threshold for the J-equation. At the semistable boundary, we use Chen's mass-concentration current and a partial regularization away from a fixed Siu upper-level set to show that the optimal destabilizing locus is contained in one fixed proper analytic subset. As a consequence, optimal destabilizing curves and divisors are finite on compact K\"ahler manifolds of arbitrary dimension. Combined with modified-K\"ahler assumptions only in the intermediate dimensions, this gives a finite criterion for the J-equation and, in particular, finiteness of all optimal destabilizers on threefolds. We also show that every optimal subvariety lies in the locus where a natural continuity path loses local smooth compactness. Finally, we prove a nef-threshold criterion which prevents optimal cycles from moving in compact one-parameter families, with consequences for Chow rigidity and connected automorphism actions.
\fi 
\end{abstract}

\maketitle

\section{Introduction}

\noindent  Let \(X\) be a compact K\"ahler manifold of dimension \(n \geq 2\), and let \(\alpha,\beta \in H^{1,1}(X,\mathbb R)\) be K\"ahler classes represented by K\"ahler forms \(\omega\) and \(\chi\) respectively. The J-equation  was first introduced by Donaldson \cite{Don99}, and asks for a K\"ahler metric
\[
\omega_\varphi=\omega+\sqrt{-1}\partial\bar\partial\varphi\in\alpha
\]
satisfying
\[
\operatorname{tr}_{\omega_\varphi}\chi
=
c_{\alpha,\beta},
\qquad
c_{\alpha,\beta}
:=
n\frac{\beta\cdot\alpha^{n-1}}{\alpha^n}.
\]

\noindent It has been a main source of sufficient existence criteria for constant scalar curvature K\"ahler metrics (see e.g. \cite{Che00,So20, SD5}) and it also occurs as the small volume-limit of the deformed Hermitian-Yang-Mills equation. It is a distinguished special case of the generalized Monga-Amp\`ere and Z-critical equations (see e.g. \cite{Pingali, DP20, DMS}).
%It is also a prominent special case of Pingali's generalized Monge-Amp\`ere equation. 

%Its parabolic counterpart, the J-flow, was introduced by Chen as a natural method for finding a solution \cite{Che04}.
The smooth solvability theory of the J-equation is by now well understood, thanks to the work of many mathematicians. Weinkove and Song--Weinkove identified the central role of the strict cone, or subsolution, condition and established convergence of the J-flow under these conditions \cite{Wei04,Wei06,SW08}. Restricting the cone condition to subvarieties leads to a collection of intersection inequalities. Lejmi--Sz\'ekelyhidi proposed these inequalities as a numerical criterion for solvability \cite{LS15}, Collins--Sz\'ekelyhidi developed the corresponding coercivity and toric theory \cite{CS17}, and Gao Chen proved that solvability is equivalent to a uniform form of the numerical criterion \cite{Che21}. The complete Lejmi--Szeklyhidi conjecture was moreover proved on all K\"ahler manifolds by Jian song in \cite{So20}.

The boundary and unstable regimes are less rigid. At the boundary, all the expected numerical inequalities remain nonnegative, but equality may occur on proper subvarieties. In the unstable regime, different subvarieties may compete to give the strongest obstruction. On K\"ahler surfaces, the J-equation can be related to a complex Monge--Amp\`ere equation, and the degeneration of the J-flow has been studied in considerable detail \cite{FLSW,KSD1,Mur26}. More recently, Datar--Mete--Song introduced a minimal-slope and bubbling picture for complex Hessian equations \cite{DMS26}, Khalid--Sj\"ostr\"om Dyrefelt developed wall--chamber decompositions and finiteness results under positivity hypotheses \cite{KSD2}, and Fu established the minimal-slope characterization of J-semistability \cite{Fu}.

These developments suggest studying not only whether the J-equation is solvable, but also the geometry of the subvarieties which most strongly obstruct solvability, in the sense of achieving the inf in the definition of the stability threshold
\[
\Gamma^{\mathrm{pp}}_\beta(\alpha)
=
\inf_{\substack{V\subsetneq X, \; \dim V = p, \\ p = 1,2,\dots, n-1}}
\frac{
\int_V c - p\beta\wedge\alpha^{p-1}
}{
(n-p) {\int_V\alpha^p}
}
\]
where 
\[
c
=
n\frac{\int_X\beta\wedge\alpha^{n-1}}
{\int_X\alpha^n}.
\]
It measures the optimal numerical margin in the subvariety inequalities and we say that a subvariety is \emph{destabilizing} if the infimum is non-positive, and \emph{optimally destabilizing} if moreover it attains the infimum. At the semistable boundary, i.e. when $\Gamma^{\mathrm{pp}}_\beta(\alpha) = 0$, these are precisely the subvarieties on which the corresponding intersection inequality becomes an equality, thanks to a fundamental linearity property observed in \cite{ZD20}.

In a program started in \cite{KSD1,KSD2} it was studied to what extent there can there be infinitely many destabilizing subvarieties, proving finiteness under certain positive conditions of initial data, while noting that finiteness is not always guaranteed. The purpose of this paper is to investigate three related questions. First, to what extent can there be infinitely many optimal destabilizers? Second, do optimal destabilizing subvarieties govern the locus where a continuity method or the J-flow loses regularity? Third, are optimal destabilizing cycles rigid, or can they move in positive-dimensional families? Our results relate optimal destabilization to loss of local smooth compactness, establish finiteness statements in arbitrary dimension, and give an explicit numerical criterion excluding moving families of optimal cycles.

\subsection{A fixed analytic set and degeneration along optimal destabilizers} In \cite{KSD1,KSD2} one of the main ingredients in the proof was to show that the set of (optimally) destabilizing subvarieties is included in the negative part of the Zariski decomposition of a suitable big class (see \cite{Boucksomthesis}), which is then a proper analytic set and has finitely many irreducible components, providing a starting point for a proof by induction on dimension. A basic difficulty in studying optimal destabilizers is that unless one imposes a condition such as that in \cite{KSD2}, it is a priori not clear whether the union of optimal destabilizers are contained in some proper analytic set. In this paper the main idea is to approach this question from a more analytic perspective. Our first result places the entire optimal locus inside one fixed analytic subset given by a Siu upper-level set of an auxiliary mass-concentration current. 

\begin{thm} \label{thm:intro-analytic-envelope} 
    Assume that $\Gamma^{pp}_\be(\al)\leq 0$. Then there exist a K\"ahler current $T_{\mathrm{mc}}\in\alpha $ and a number $\delta_0>0$ such that \[ \mathcal D^{\mathrm{opt}}_{\alpha,\beta} \subset E_{\delta_0}(T_{\mathrm{mc}}) := \{x\in X:\nu(T_{\mathrm{mc}},x)\geq\delta_0\}, \] where $$\mathcal{D}^{\mathrm{opt}}_{\alpha,\beta}=\bigcup_{V\in \mathrm{Des}_{\al,\be}^{\mathrm{opt}}(X)}V$$ is the union of all optimally destabilizing subvarieties with respect to $(X,\alpha,\beta)$. In other words, the optimal destabilizing locus is contained in one fixed proper analytic subset of $X$. 
\end{thm}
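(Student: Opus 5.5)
We will first reduce to the semistable boundary, then construct the current from Chen's mass-concentration argument, and finally read off the inclusion from Lelong numbers.

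\textbf{Reduction to $\Gamma=0$.} Set $\gamma:=\Gamma^{\mathrm{pp}}_\beta(\alpha)\le 0$. Replacing $\beta$ by $\beta':=\beta-\gamma\,\alpha$ (or rescaling $\alpha$, using the linearity of the threshold observed in \cite{ZD20}), we obtain a pair with $\Gamma^{\mathrm{pp}}_{\beta'}(\alpha)=0$. The optimally destabilizing subvarieties of $(\alpha,\beta)$ are then exactly the subvarieties $V$ with
\[
\int_V c'\alpha^p-p\beta'\wedge\alpha^{p-1}=0 .
\]
If $\beta'$ fails to be Kähler, we instead work along the family $\beta_t=\beta+t\alpha$.

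\textbf{Construction of $T_{\mathrm{mc}}$.} Consider the continuity path of Chen \cite{Che21}: for $\varepsilon>0$ the perturbed pair is uniformly J-stable, so it admits solutions $\omega_\varepsilon\in\alpha$ of
\[
\operatorname{tr}_{\omega_\varepsilon}\chi=c_\varepsilon ,\qquad c_\varepsilon\downarrow c .
\]
Consider the measures $\mu_\varepsilon:=\chi\wedge\omega_\varepsilon^{n-1}$, or equivalently the currents $\omega_\varepsilon$ themselves. By weak compactness, some subsequence gives $\omega_\varepsilon\to T_\infty\in\alpha$, a closed positive current.

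The key claim is that $T_\infty$ must concentrate mass. Suppose its Lelong numbers vanished along an optimal $V$. Then, using Demailly regularization, the concentration estimate in Chen's proof (the mass of $\omega_\varepsilon^{p}$ restricted near $V$ must blow up for the equality to persist) gives a contradiction. In fact we would like a quantitative statement: there is $\delta_1>0$, depending only on $(X,\alpha,\beta,\chi)$, with $\nu(T_\infty,x)\ge\delta_1$ for every $x$ in an optimal $V$.

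To make this uniform in $V$, we argue by contradiction. Take optimal subvarieties $V_k$ with $\sup_{V_k}\nu(T_\infty,\cdot)\to 0$. For small $\nu$, the partial regularization of $T_\infty$ away from a Siu upper-level set $E_\delta$ produces a smooth (semi)positive representative near $V_k$. Combined with the cone inequality satisfied by the $\omega_\varepsilon$, namely
\[
c\,\omega_\varepsilon^{n-1}-(n-1)\chi\wedge\omega_\varepsilon^{n-2}>0
\]
(and its restrictions), this yields a strict inequality on $V_k$ with a margin independent of $k$. That contradicts optimality.

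To obtain a Kähler current rather than just a positive current, we set
\[
T_{\mathrm{mc}}:=(1-s)T_\infty+s\,\omega
\]
for small $s>0$. This is strictly positive and lies in $\alpha$, and its Lelong numbers are scaled by $1-s$, so $\delta_0:=(1-s)\delta_1$ works.

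\textbf{Conclusion.} By Siu's theorem, $E_{\delta_0}(T_{\mathrm{mc}})$ is an analytic subset. It is proper because the Lelong numbers of a current in a fixed class are bounded and the set where they are positive is a countable union of proper analytic sets. The inclusion $\mathcal D^{\mathrm{opt}}_{\alpha,\beta}\subset E_{\delta_0}(T_{\mathrm{mc}})$ then follows from the uniform lower bound on Lelong numbers along every optimal $V$.

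\textbf{Main obstacle.} The main obstacle is the uniform lower bound $\delta_1$. One has to convert "equality in the subvariety inequality" into "Lelong number at least $\delta_1$ everywhere on $V$", not just at generic points or along some subvariety. I would first try the approach of Chen's proof, a local mass-concentration estimate for $\omega_\varepsilon^{p}|_V$. Here the regularization error has to be controlled by Demailly's estimate in terms of $\max\nu$, so that a small maximal Lelong number contradicts equality quantitatively.
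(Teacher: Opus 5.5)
\textbf{There is a genuine gap, and it lies in the choice of the current.} A weak limit $T_\infty$ of solutions $\omega_\varepsilon\in\alpha$ need not have any positive Lelong number along an optimal subvariety. So the uniform bound $\delta_1$ you are after is false for $T_\infty$, not merely unproved.

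Here is a concrete example. Take $X=\mathrm{Bl}_p\mathbb{P}^2$ with exceptional curve $E$. One can pick K\"ahler classes $\alpha,\beta$ with $\Gamma^{\mathrm{pp}}_\beta(\alpha)=0$ and $c\alpha-\beta=\lambda\pi^*[H]$, $\lambda>0$, so that $E$ is optimal. In dimension two the J-equation along the path $\beta_t$ becomes $(c\omega_t-\chi_t)^2=\chi_t^2$ in the class $c\alpha-\beta_t$ (Chen's path in $(1+s)\alpha$ behaves the same way). For $t\geq 1/2$ these classes are represented by forms dominating the fixed semipositive big form $\tfrac{\lambda}{2}\pi^*\omega_{FS}$. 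The uniform $L^\infty$ estimates of Ko{\l}odziej and Eyssidieux--Guedj--Zeriahi then keep the normalized potentials bounded. Hence every weak limit has bounded potential and vanishing Lelong numbers everywhere.

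Replacing $T_\infty$ by $(1-s)T_\infty+s\,\omega$ only rescales Lelong numbers, so it does not help. The same defect appears in your contradiction step. The cone inequality of the $\omega_\varepsilon$ has a margin that degenerates at the boundary. Regularizing a current that satisfies only $P_\chi\leq c$ weakly cannot produce a strict inequality on $V_k$ with a margin independent of $k$.

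\textbf{The missing idea is a current in $\alpha$ carrying both a K\"ahler lower bound and a \emph{strict} cone margin.} The paper obtains it as follows.

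\emph{The current.} Apply Chen's mass-concentration theorem \cite[Theorem~1.18]{Che21} to the strict subsolutions $\Omega_s\in(1+s)\alpha$ supplied by \cite[Theorem~1.11]{Che21}. This yields $S\in\alpha-\varepsilon_0\beta$ with $P_\chi(S)\leq c$ weakly. Then $T_{\mathrm{mc}}=S+\varepsilon_0\chi$ satisfies $T_{\mathrm{mc}}\geq\varepsilon_0\chi$. By concavity of $x\mapsto x/(1+\varepsilon_0x)$ it also satisfies $P_\chi(T_{\mathrm{mc}})\leq c/(1+\varepsilon_0c/(n-1))<c$. In the surface example this is a K\"ahler current in $c\alpha-\beta$, which is forced to have positive Lelong numbers along $E$.

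\emph{Partial regularization.} A partial version of Chen's gluing regularization uses the smooth $\Omega_s$ as comparison branch near the Siu set. It gives smooth $\widehat\Omega_{s,K}\in(1+s)\alpha$ with $P_\chi<c$ on all of $X$. On any compact $K$ off $E_{\delta_0}(T_{\mathrm{mc}})$ it also gives $\widehat\Omega_{s,K}\geq a\chi$ and $P_\chi(\widehat\Omega_{s,K})\leq c-\eta$, with $a,\eta$ independent of $s$. Here $\delta_0$ is fixed in advance by the regularization constants. So uniformity over all optimal $V$ is automatic, and no sequence $V_k$ is needed.

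\emph{Mass comparison.} Suppose an optimal $V$ meets $X\setminus E_{\delta_0}(T_{\mathrm{mc}})$. The integrand $c\widehat\Omega_{s,K}^p-p\chi\wedge\widehat\Omega_{s,K}^{p-1}$ is pointwise nonnegative, so its integral over $V$ is bounded below by some $\delta_V>0$ independent of $s$. Cohomologically, however, this integral equals $cs(1+s)^{p-1}\int_V\alpha^p\to0$, a contradiction.

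\textbf{Smaller points.}
\begin{itemize}
\item The equation $\operatorname{tr}_{\omega_\varepsilon}\chi=c_\varepsilon$ with $\omega_\varepsilon\in\alpha$ forces $c_\varepsilon=c$ after integration.
\item The negation you need concerns the generic Lelong number $\inf_{V_k}\nu$, not $\sup_{V_k}\nu$.
\item $\beta-\gamma\alpha$ is automatically K\"ahler since $\gamma\leq0$.
\end{itemize}
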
 

\noindent The analytic set in Theorem~\ref{thm:intro-analytic-envelope} is auxiliary and need not coincide with the optimal locus. Nevertheless, its analyticity is enough to prove finiteness of optimal divisors. Together with a separate non-K\"ahler-locus argument for curves and the methods of \cite{KSD2} in the intermediate dimensions, it gives the finiteness results stated below. There is also a direct relationship with degeneration of the continuity method. Consider the affine path \[ \beta_t = (1-t)\frac{c}{n}\alpha+t\beta \] and let $\tau$ be its first boundary parameter. The optimal subvarieties are unchanged along this path for $t>0$. If $\omega_t\in\alpha$ solves the corresponding J-equation for $t<\tau$, then local smooth compactness must fail at every point of every optimal destabilizing subvariety. 

\begin{thm}\label{prop:intro-forced-nonregularity} 
Let $\mathcal R_\tau$ be the set of points admitting a neighborhood on which some sequence $\omega_{t_j}$, $t_j\uparrow\tau$, converges smoothly. Then \[ \mathcal D^{\mathrm{opt}}_{\alpha,\beta} \subset X\setminus\mathcal R_\tau. \] 
\end{thm}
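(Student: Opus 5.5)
We argue by contradiction. Suppose $x_0\in V\cap\mathcal R_\tau$ for some optimally destabilizing $V$ of dimension $p$. Then there are a neighborhood $U\ni x_0$ and a sequence $t_j\uparrow\tau$ such that $\omega_j:=\omega_{t_j}$ converges smoothly on $U$ to a smooth form $\omega_\tau$. The form $\omega_\tau$ is at least semipositive, and it solves $\operatorname{tr}_{\omega_\tau}\chi_\tau=c$ on $U$. Here $\chi_\tau$ is a fixed Kähler representative of $\beta_\tau$; we may take $\chi_t=(1-t)\frac{c}{n}\omega+t\chi$, so that the constant $c_{\alpha,\beta_t}=c$ is independent of $t$. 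Since the trace equation bounds $\omega_\tau^{-1}$ from above in terms of $\chi_\tau$, $\omega_\tau$ is in fact strictly positive on $U$.

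The key pointwise input is the cone condition satisfied by each $\omega_j$. For any $p$-dimensional subspace $W\subset T_xX$, the J-equation gives
\[
c\,\omega_j^{p}|_W - p\,\chi_{t_j}\wedge\omega_j^{p-1}|_W>0 .
\]
This is the standard consequence of $\operatorname{tr}_{\omega_j}\chi_{t_j}=c$ with $\chi_{t_j}>0$: the partial trace over $W$ is strictly less than $c$, because the complementary eigenvalues are positive. Moreover, the defect is quantitative. It equals $\sum$ of the eigenvalues of $\chi_{t_j}$ relative to $\omega_j$ in directions complementary to $W$, which is bounded below by a positive constant on compact subsets of $U$, uniformly in $j$ because of the smooth convergence. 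Passing to the limit, on $V_{\mathrm{reg}}\cap U$ we get
\[
\big(c\,\omega_\tau^p-p\,\chi_\tau\wedge\omega_\tau^{p-1}\big)|_V\geq \epsilon\,\omega_\tau^p|_V
\]
for some $\epsilon>0$ (after shrinking $U$). The singular set of $V$ has measure zero, so we may take $x_0$ to be a regular point of $V$.

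Next we globalize, and this is the step we expect to be the main obstacle. Optimality means
\[
\int_V c\,\alpha^p-p\beta_\tau\wedge\alpha^{p-1}=(n-p)\Gamma^{pp}_{\beta_\tau}(\alpha)\int_V\alpha^p=0
\]
at the boundary parameter $\tau$, using that optimal subvarieties are unchanged along the path. Away from $U$, the forms $\omega_j|_V$ still satisfy the strict cone inequality pointwise, so
\[
\int_V c\,\omega_j^p-p\chi_{t_j}\wedge\omega_j^{p-1}\geq \int_{V\cap U}\big(c\,\omega_j^p-p\chi_{t_j}\wedge\omega_j^{p-1}\big),
\]
and by cohomological invariance the left side equals $\int_V c\,\alpha^p-p\beta_{t_j}\wedge\alpha^{p-1}$. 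Letting $j\to\infty$, the left side tends to $0$, while the right side tends to $\int_{V\cap U}(c\,\omega_\tau^p-p\chi_\tau\wedge\omega_\tau^{p-1})\geq\epsilon\int_{V\cap U}\omega_\tau^p>0$. This is a contradiction.

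The delicate points are the following. First, the integrals over $V$ must be computed with smooth closed forms restricted to $V$; this is fine for singular $V$ by Lelong's theorem, since integration over $V$ is a closed positive current. Second, the pointwise nonnegativity must hold on all of $V_{\mathrm{reg}}$, including points where $\omega_j$ degenerates; this holds because each $\omega_j$ is a genuine smooth solution. Third, when $\Gamma^{pp}<0$ at the original parameter, we need that $\tau$ is precisely where the threshold along the path hits $0$, and that $V$ remains optimal there. This follows from the linearity in $t$ of the numerator, which vanishes at $t=0$, so the ratio scales by a positive factor.
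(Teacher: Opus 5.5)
Your proposal is correct and follows essentially the same route as the paper's proof. Both arguments reduce to the boundary pair $(\alpha,\beta_\tau)$ and obtain positivity of the local limit from $\chi_{t_j}\le c\,\omega_{t_j}$; they then use strict pointwise cone positivity on $V_{\mathrm{reg}}$ and compare a cohomological integral tending to $0$ with a local contribution over $V_{\mathrm{reg}}\cap U'$ ($U'\Subset U$) that stays bounded below. Your quantitative bound $\geq\epsilon\,\omega_\tau^p$ is harmless but not needed. Note also that the pointwise defect is the trace of $\chi$ on the $\omega$-orthogonal complement of $W$, which is at least $(n-p)$ times the smallest eigenvalue, rather than literally a sum of eigenvalues.
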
 
\noindent No analyticity is asserted for $X\setminus\mathcal R_\tau$. Thus the two sets play complementary roles: \( E_{\delta_0}(T_{\mathrm{mc}}) \) is a fixed analytic set used to prove finiteness, whereas \( X\setminus\mathcal R_\tau \) records actual degeneration of the continuity path.

\subsection{Finiteness of the set of optimally destabilizing curves and divisors}

Our next main result shows that there are always only finitely many optimally destabilizing curves and divisors on any $(X,\alpha,\beta)$, no matter the dimension of $X$ and without positivity assumptions on $c\alpha - p\beta$ as in \cite{KSD2}. 

\begin{thm}\label{thm:finitely many curves and divisors}
	Let $X$ be any compact K\"ahler manifold and $\alpha, \beta$ two K\"ahler classes. Then the set of optimal destabilizing curves and divisors for $(X,\alpha,\beta)$ is finite. 
\end{thm}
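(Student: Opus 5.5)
We split into divisors and curves, with a preliminary reduction for the stable case. If $\Gamma^{pp}_\be(\al)>0$, no subvariety is destabilizing and there is nothing to prove, so we assume $\Gamma^{pp}_\be(\al)\le 0$ throughout.

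\textbf{Divisors.} Here Theorem~\ref{thm:intro-analytic-envelope} applies. It gives a fixed Kähler current $T_{\mathrm{mc}}\in\al$ and $\delta_0>0$ such that every optimal destabilizer is contained in the Siu upper-level set $E_{\delta_0}(T_{\mathrm{mc}})$.
\begin{itemize}
\item By Siu's theorem, $E_{\delta_0}(T_{\mathrm{mc}})$ is a proper analytic subset of $X$, so it has finitely many irreducible components.
\item An optimal destabilizing divisor $D$ is irreducible of dimension $n-1$ and lies inside this proper analytic set.
\item Hence $D$ must be one of its $(n-1)$-dimensional irreducible components.
\end{itemize}
This yields finiteness of optimal divisors immediately.

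\textbf{Curves.} Curves can sit inside a positive-dimensional component of $E_{\delta_0}(T_{\mathrm{mc}})$, so we need a second argument. For $p=1$ the slope of a curve $C$ is $\int_C (c\al-\be)\,/\,((n-1)\int_C\al)$. Optimality with value $\Gamma:=\Gamma^{pp}_\be(\al)\le0$ then reads
\[
\int_C\bigl(c\al-\be-(n-1)\Gamma\al\bigr)=0,
\]
and the class $\gamma:=(c-(n-1)\Gamma)\al-\be$ satisfies $\gamma\cdot C\ge0$ for all curves $C$.

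\begin{itemize}
\item \textbf{Step 1: $\gamma$ is nef and big.} Nefness should follow from $\gamma\cdot C\ge0$ for all curves, using the Demailly--P\u{a}un characterization applied to $\gamma+\vep\al$, together with the inequalities in all dimensions $p$ that hold by definition of $\Gamma$. Bigness, i.e. $\int_X\gamma^n>0$, should follow from the $p=n$ identity: $\int_X(c\al-n\be)\wedge\al^{n-1}=0$ combined with $\Gamma\le 0$.
\item \textbf{Step 2: optimal curves lie in the non-Kähler locus.} Optimal curves are exactly the curves with $\gamma\cdot C=0$. By Collins--Tosatti, the non-Kähler locus $E_{nK}(\gamma)$ of a nef and big class is the union of positive-dimensional subvarieties $V$ with $\int_V\gamma^{\dim V}=0$. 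In particular, every curve $C$ with $\gamma\cdot C=0$ lies in $E_{nK}(\gamma)$, which is a proper analytic set.
\item \textbf{Step 3: induction.} Take an irreducible component $Z$ of $E_{nK}(\gamma)$ of dimension $\ge2$ and pass to a resolution $\tilde Z$. The class $\gamma|_Z$ is nef. If it is big on $Z$, the same Collins--Tosatti argument pushes the $\gamma$-trivial curves into a proper analytic subset of $Z$. If it is not big, then $\int_Z\gamma^{\dim Z}=0$, and we must show directly that $Z$ cannot be swept out by a positive-dimensional family of $\gamma$-trivial curves.
\end{itemize}

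\textbf{The main obstacle} is the last case: ruling out moving families of optimal curves in a component $Z$ on which $\gamma$ is not big. The plan is to use optimality of $\Gamma$ in dimension $\dim Z$.
\begin{itemize}
\item The $Z$-inequality states $\int_Z\bigl(c\al-p\be-(n-p)\Gamma\al\bigr)\wedge\al^{p-1}\ge0$.
\item A covering family of $\gamma$-trivial curves forces $\gamma|_Z$ to be non-big. Combined with the linearity property of \cite{ZD20} and a Hodge-index / Khovanskii--Teissier type inequality comparing $\int_Z\gamma\wedge\al^{p-1}$ with $\int_Z\gamma^{p}$, this should force $\gamma\cdot C$ to be strictly positive for generic curves in $Z$.
\item This contradicts the assumption that $Z$ is covered by optimal curves.
\end{itemize}
If this works, Noetherian induction on dimension terminates and leaves only finitely many isolated optimal curves, which completes the proof.
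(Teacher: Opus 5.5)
Your divisor argument is the paper's (Corollary~\ref{cor:finite-optimal-divisors}). Your class $\gamma=(c-(n-1)\Gamma)\alpha-\beta$ is also the right one for curves: it equals $c_0\alpha-\beta_0$ with $\beta_0=\beta-\Gamma\alpha$ (K\"ahler, as $\Gamma\le0$) and $c_0=c-n\Gamma$, and it is a positive multiple of the paper's class $c\alpha-\beta_\tau$. The curve part, however, has a genuine gap, caused by one missing ingredient that you need twice.

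First, bigness does not follow from $\int_X(c\alpha-n\beta)\wedge\alpha^{n-1}=0$ and $\Gamma\le0$. These only give $\gamma\cdot\alpha^{n-1}>0$, which says nothing about $\gamma^n$ for a nef class. Nor does nefness follow from $\gamma\cdot C\ge0$ on curves, for a transcendental class on a K\"ahler manifold. The paper gets nefness from solvability of the J-equation along $\beta_t$, $t<\tau$, via \cite{So20}; a Demailly--P\u{a}un argument would instead need exactly the estimate below for $\gamma+\vep\alpha$ on every subvariety. What is required is Lemma~\ref{lem:positive volume}. By Khovanskii--Teissier the numbers $S_k=\alpha^{n-k}\cdot\beta_0^k$ are log-concave, so $S_k/S_{k-1}\le S_1/S_0=c_0/n$. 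Grouping the binomial expansion of $(c_0\alpha-\beta_0)^n$ into consecutive pairs then makes every pair nonnegative. Strict positivity comes from the pairs with $j\ge1$ or from the leftover term $\beta_0^n$.

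Second, and decisively, your Step~3 leaves its hard case open (``if this works''), and the Hodge-index/linearity sketch given there is not an argument. This is not a marginal case. The irreducible components $Z$ of $E_{nK}(\gamma)=\mathrm{Null}(\gamma)$ themselves satisfy $\int_Z\gamma^{\dim Z}=0$, so for a component of dimension $\ge2$ your ``non-big'' alternative is the only one that can occur; the ``big'' branch is vacuous.

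The idea you are missing is that the non-big case never arises. Run the same expansion on a resolution of an arbitrary subvariety $V$ with $k=\dim V\ge2$. Use only the J-nef inequality $\mu_{\alpha,\beta_0}(V)\le c_0$ for $V$ itself, i.e.\ $\int_V\beta_0\wedge\alpha^{k-1}\le\frac{c_0}{k}\int_V\alpha^k$. The $j=0$ pair is then $\ge0$. Strict positivity comes from the leftover $\int_V\beta_0^k$ when $k$ is even, and from the $j=1$ pair when $k\ge3$ is odd. So $\int_V\gamma^k>0$.

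Hence $\mathrm{Null}(\gamma)$ contains no subvariety of dimension $\ge2$. By Collins--Tosatti, $E_{nK}(\gamma)$ is then a compact analytic set all of whose components are curves. The optimal curves are exactly the curves with $\gamma\cdot C=0$, so they are among these finitely many components. No induction is needed: this single positivity estimate on every subvariety of dimension at least two is what your proposal lacks.
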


\noindent On threefolds and proper irreducible subvariety is either a curve or divisor, so this recovers, by an independent method, the finiteness part of Liu’s recent threefold theorem \cite{Liu1} which was made available as we were finishing up this project. It also improves on the main result of \cite{KSD2} to test only for subvarieties of intermediary dimensions $p = 2,\dots, n-2$: % When $n \leq 3$ this gives:

\begin{cor}\label{cor:all dim}
Let $X$ be a compact K\"ahler manifold with K\"ahler classes $\alpha$ and $\beta$ such that $$c\alpha - p \beta \in \mathcal M_{p+1} \mathcal K$$ for each $p = 2,\dots,n-2$. Then the set of optimally destabilizing subvarieties for $(X,\alpha,\beta)$ is finite. 
\end{cor}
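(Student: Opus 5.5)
The plan is to split the optimal destabilizers according to their dimension $p\in\{1,\dots,n-1\}$. Theorem~\ref{thm:finitely many curves and divisors} already handles the extreme dimensions $p=1$ and $p=n-1$, and it imposes no hypothesis on $(X,\alpha,\beta)$. The only remaining work is therefore in the intermediate range $2\le p\le n-2$, which is exactly where the hypothesis $c\alpha-p\beta\in\mathcal M_{p+1}\mathcal K$ is assumed. When $n\le 3$ this range is empty, and the corollary reduces to Theorem~\ref{thm:finitely many curves and divisors}.

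\textbf{Step 1 (reduction to the semistable boundary).} If $\Gamma^{pp}_\beta(\alpha)>0$, there are no destabilizers at all and nothing needs proving. Otherwise I use the affine path $\beta_t$ from the introduction: as remarked there, the optimal subvarieties do not change along this path for $t>0$. I will therefore work at the boundary parameter where the threshold equals $0$, after checking that the positivity hypotheses are transported correctly. Concretely, this means checking that $c_t\alpha-p\beta_t$ is a positive combination of $c\alpha-p\beta$ and $\alpha$, so that it stays in the cone $\mathcal M_{p+1}\mathcal K$, which I expect to be convex and to contain the Kähler cone. At the boundary, the linearity property of \cite{ZD20} identifies the optimal $p$-dimensional subvarieties $V$ with those satisfying the equality $\int_V (c\alpha^p-p\beta\wedge\alpha^{p-1})=0$.

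\textbf{Step 2 (intermediate dimensions via \cite{KSD2}).} For a fixed $p$ with $2\le p\le n-2$, I invoke the mechanism of \cite{KSD2}. Under the assumption $c\alpha-p\beta\in\mathcal M_{p+1}\mathcal K$, every optimal $p$-dimensional $V$ is forced into the negative part of the Zariski decomposition of a suitable big class. More precisely, $V$ lies in the non-Kähler locus, which is a proper analytic set with finitely many irreducible components. The optimal $V$ of dimension $p$ are then among those components, or else they are optimal destabilizers inside a component $Z$. In the latter case I argue by induction on $\dim Z$, using the restricted data $(Z,\alpha|_Z,\beta|_Z)$, for which the same cone condition descends. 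The main result of \cite{KSD2} gives exactly this finiteness dimension by dimension, so here I only need to quote it for $p$ in the intermediate range rather than for all $p$.

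\textbf{Step 3 (assembly).} Taking the finite union over $p=1,\dots,n-1$ gives the conclusion: $p=1$ and $p=n-1$ come from Theorem~\ref{thm:finitely many curves and divisors}, and $2\le p\le n-2$ come from Step 2. I expect the main obstacle to be checking that the argument of \cite{KSD2} really runs one dimension at a time. Their statement assumes the positivity condition for all $p$, and the induction on components $Z$ might seem to need the condition in other dimensions, including the curve and divisor cases on $Z$. My plan is to use Theorem~\ref{thm:intro-analytic-envelope} together with Theorem~\ref{thm:finitely many curves and divisors} applied on $Z$ (for which I would need a singular or resolved version) to fill in the missing dimensions. If that fails, I will re-run the \cite{KSD2} induction directly, recording that only the $(p+1)$-positivity is used when treating $p$-dimensional optimal subvarieties.
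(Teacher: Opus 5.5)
Your split by dimension and your use of Theorem~\ref{thm:finitely many curves and divisors} for $p=1$ and $p=n-1$ agree with the paper. Step~2, however, has a genuine gap: you never use the one fact that makes the intermediate range immediate, and the induction you put in its place does not go through as stated.

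Fix $2\le p\le n-2$ and put $\xi:=c\alpha-p\beta$. The argument runs as follows.
\begin{enumerate}
\item By \cite[Lemma 2.3]{KSD2} (recalled in Section~\ref{sec:preliminaries}), the hypothesis $\xi\in\mathcal M_{p+1}\mathcal K$ means that every component of $E_{\mathrm{nK}}(\xi)$ has dimension at most $p$.
\item An optimal $p$-dimensional $V$ satisfies $\int_V\xi\cdot\alpha^{p-1}=(n-p)\Gamma^{pp}_\beta(\alpha)\int_V\alpha^p\le0$.
\item If $V\not\subset E_{\mathrm{nK}}(\xi)$, you could restrict to $V$ a K\"ahler current in $\xi$ with analytic singularities that is smooth at a general point of $V$. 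This would make the integral in (2) strictly positive, which is the argument of \cite[Lemma 2.4]{KSD2}. Hence $V\subset E_{\mathrm{nK}}(\xi)$.
\item The component containing $V$ has dimension $\le p=\dim V$, so $V$ is itself one of the finitely many irreducible components of this fixed analytic set.
\end{enumerate}
This is the paper's proof. It runs one $p$ at a time and uses only the hypothesis in dimension $p$. Your second alternative, an optimal $V$ lying strictly inside a component $Z$, simply cannot occur.

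The route you propose instead would not work as written.
\begin{itemize}
\item The claim that ``the same cone condition descends'' to $(Z,\alpha|_Z,\beta|_Z)$ is unjustified. $Z$ lies in $E_{\mathrm{nK}}(\xi)$, which is exactly where $\xi|_Z$ can fail to be big; in general it need not even be pseudoeffective. Moreover, the slope constant for the J-equation on $Z$ is $\mu_{\alpha,\beta}(Z)$, not $c$.
\item Your fallback of applying Theorem~\ref{thm:intro-analytic-envelope} and Theorem~\ref{thm:finitely many curves and divisors} on $Z$ would need versions of those results for possibly singular $Z$. Neither you nor the paper supplies these.
\item Step~1 is correct, since $c\alpha-p\beta_t=t(c\alpha-p\beta)+(1-t)(1-\tfrac pn)c\alpha$ stays in the cone. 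It is unnecessary for the intermediate range, though, because the containment $V\subset E_{\mathrm{nK}}(c\alpha-p\beta)$ only uses $\Gamma^{pp}_\beta(\alpha)\le0$.
\end{itemize}
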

\noindent We can even be slightly more precise: The set of (not necessarily smooth) $p-1$-dimensional optimal destabilizing subvarieties inside a smooth $p$-dimensional subvariety, is always finite.  

Moreover, if $n = 3$ we have
\begin{cor} \label{cor:three dim}
On any K\"ahler threefold the set of optimally destabilizing subvarieties is finite.
\end{cor}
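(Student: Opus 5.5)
The plan is to deduce Corollary~\ref{cor:three dim} directly from Theorem~\ref{thm:finitely many curves and divisors}. The point is that on a threefold there are no intermediate dimensions to handle.

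\emph{Step 1.} Recall the definition of optimally destabilizing subvarieties: they are the proper irreducible subvarieties $V\subsetneq X$ of dimension $p\in\{1,\dots,n-1\}$ attaining the infimum defining $\Gamma^{\mathrm{pp}}_\beta(\alpha)$, in the case where this infimum is non-positive. If $\Gamma^{\mathrm{pp}}_\beta(\alpha)>0$, there are no destabilizing subvarieties at all and the set is empty. So we may assume $\Gamma^{\mathrm{pp}}_\beta(\alpha)\leq 0$.

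\emph{Step 2.} For $n=3$ the admissible dimensions are $p=1$ and $p=2$. Every optimally destabilizing subvariety is therefore either an irreducible curve or an irreducible divisor. Theorem~\ref{thm:finitely many curves and divisors} holds for any compact K\"ahler manifold and any pair of K\"ahler classes, so it shows that the set of optimal destabilizing curves is finite and the set of optimal destabilizing divisors is finite. Their union is exactly the full set of optimally destabilizing subvarieties, so that set is finite.

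\emph{Alternative route.} The same conclusion follows from Corollary~\ref{cor:all dim}. Its hypothesis $c\alpha-p\beta\in\mathcal M_{p+1}\mathcal K$ is imposed only for $p=2,\dots,n-2$, and for $n=3$ this range is empty, so the hypothesis is vacuous.

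There is no genuine obstacle here. The only point to check is that Theorem~\ref{thm:finitely many curves and divisors} is truly unconditional: it must not assume, for example, that $\Gamma^{\mathrm{pp}}_\beta(\alpha)=0$ rather than $\leq 0$, or that $X$ is projective. As stated, it applies to arbitrary compact K\"ahler $X$ and K\"ahler classes $\alpha,\beta$, so the corollary follows.
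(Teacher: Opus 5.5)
Your proposal is correct and matches the paper's own reasoning. On a threefold every proper positive-dimensional irreducible subvariety is a curve or a divisor, so the corollary follows at once from Theorem~\ref{thm:finitely many curves and divisors}, or equivalently from Corollary~\ref{cor:all dim}, whose hypothesis over $p=2,\dots,n-2$ is vacuous when $n=3$. Your check that the theorem is unconditional also agrees with the paper: curves are handled by Corollary~\ref{cor:finite curves} after passing to the boundary pair $(\alpha,\beta_\tau)$ via Lemma~\ref{lem:optimal-independent-path}, and divisors by Corollary~\ref{cor:finite-optimal-divisors} under $\Gamma^{\mathrm{pp}}_\beta(\alpha)\leq 0$, which covers every case where optimally destabilizing subvarieties can exist.
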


\noindent Given the above results, what remains is understanding optimally destabilizing subvarieties of \emph{intermediate dimension} $2 \leq p \leq n-2$. In this direction we are not yet able to prove the expected finiteness, but instead we establish that such optimal destabilizers cannot move in continuous families.

\subsection{Rigidity and automorphism invariance for optimally destabilizing subvarieties of dimension $2 \leq p \leq n-2$}
\label{subse:nef threshold}

The threshold $\Gamma^{\mathrm{pp}}_\beta(\alpha)$ is defined by optimizing over all proper subvarieties of $X$. Even when the infimum is attained, it is therefore not a priori clear that an optimal subvariety is geometrically distinguished: the members of a positive-dimensional family of cycles have the same numerical class, and hence the same slope. The next theorem shows that a quantitative amount of positivity of $\beta$ in the direction $-\alpha$ rules out this possibility and forces optimal cycles to be rigid. Although the numerical hypothesis may at first appear technical, it is not merely formal. At the J-semistable boundary $\Gamma^{\mathrm{pp}}_\beta(\alpha)=0$, it is automatic for curves, since $$s_\alpha(\beta) := \sup\{s\in\mathbb R:\beta-s\alpha\ \text{is nef}\}$$ satisfies $s_{\alpha}(\beta) > 0$. More generally, the single inequality \( s_\alpha(\beta)>\frac{1}{6}c \) implies the hypothesis of the following theorem simultaneously for every $1\leq p\leq n-1$, since \[ \frac{p-1}{p(p+1)}\leq\frac{1}{6}. \] 
Thus the theorem below applies automatically to all optimal curves at the boundary, including the familiar surface boundary configurations, and under a simple uniform condition it applies to optimal cycles in every dimension. 

\begin{thm}\label{thm:threshold-rigidity} 
Let $X$ be a compact K\"ahler manifold of dimension $n$, and let $\alpha,\beta\in H^{1,1}(X,\mathbb R)$ be K\"ahler classes. Set \( c:=\mu_{\alpha,\beta}(X)\) and consider the nef threshold \[ s_\alpha(\beta) := \sup\{s\in\mathbb R:\beta-s\alpha\ \text{is nef}\}. \] Moreover, fix an integer $1\leq p\leq n-1$ and assume that \[ s_\alpha(\beta)> \Gamma^{\mathrm{pp}}_\beta(\alpha) + \frac{p-1}{p(p+1)} \bigl(c-n\Gamma^{\mathrm{pp}}_\beta(\alpha)\bigr). \] Then the $p$-dimensional optimal subvarieties of $X$ cannot move in a nonconstant compact one-parameter family. More precisely, there is no nonconstant analytic family $\{V_t\}_{t\in T}$ over a connected compact complex curve $T$ whose general member is an irreducible $p$-dimensional subvariety satisfying \[ \frac{c-\mu_{\alpha,\beta}(V_t)}{n-p} = \Gamma^{\mathrm{pp}}_\beta(\alpha). \] 
\end{thm}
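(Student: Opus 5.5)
We argue by contradiction. Suppose $\{V_t\}_{t\in T}$ is a nonconstant compact family of $p$-dimensional optimal cycles. Let $\mathcal Z\subset X\times T$ be its graph and $\pi_X:\mathcal Z\to X$, $\pi_T:\mathcal Z\to T$ the projections. Since the family is nonconstant, the image $W:=\pi_X(\mathcal Z)$ is an irreducible subvariety of dimension $p+1$. Replace $\mathcal Z$ by a resolution $Z$. Then $Z$ is a compact Kähler $(p+1)$-fold with a generically finite map $f:Z\to W$ of degree $d\geq 1$ and a fibration $g:Z\to T$ whose general fiber $F$ maps onto $V_t$. The fiber class $F=g^*[\mathrm{pt}]$ is nef and satisfies $F^2=0$. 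Write $\Gamma:=\Gamma^{\mathrm{pp}}_\beta(\alpha)$ and $c_q:=c-(n-q)\Gamma$. All intersection numbers on $W$ and $V_t$ are computed on $Z$ against $f^*\alpha$ and $f^*\beta$, which we still denote $\alpha,\beta$.

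\textbf{Step 1: the two numerical relations.} Optimality of $V_t$ says exactly
\[
c_p\,\alpha^p\cdot F = p\,\beta\alpha^{p-1}\cdot F .
\]
For $W$ there are two cases. If $p+1<n$, the definition of $\Gamma$ as an infimum gives $c_{p+1}\,\alpha^{p+1}\cdot Z\geq (p+1)\,\beta\alpha^p\cdot Z$. If $p+1=n$, then $W=X$ and equality holds with $c_n=c$. In both cases we have the inequality $(\ast)$: $(p+1)\,\beta\alpha^p\leq c_{p+1}\,\alpha^{p+1}$ on $Z$.

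\textbf{Step 2: use the nef threshold.} Put $s:=s_\alpha(\beta)$ and $N:=\beta-s\alpha$, which is nef. Substituting into $(\ast)$ and into the equality on $F$ gives
\[
(p+1)N\alpha^p\leq (c_{p+1}-(p+1)s)\,\alpha^{p+1},
\qquad
p\,N\alpha^{p-1}F=(c_p-ps)\,\alpha^pF .
\]
We now need a lower bound for $N\alpha^p$ on $Z$ in terms of $N\alpha^{p-1}F$ and $\alpha^pF$. To get it, write $\alpha$ on $Z$ as $\lambda F+R$ with $\lambda:=\alpha^{p+1}/((p+1)\alpha^pF)$, the largest multiple of $F$ one can subtract while keeping the top self-intersection controlled. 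Then expand $N\alpha^p$ multilinearly using $F^2=0$, and bound the remaining mixed terms from below by Khovanskii--Teissier / Hodge-index inequalities for the nef classes $\alpha,N,F$.

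\textbf{Step 3: the target inequality.} The aim is an estimate of the form
\[
N\alpha^p\cdot Z\;\geq\;\frac{p+1}{p}\cdot\frac{\alpha^{p+1}}{\alpha^pF}\cdot N\alpha^{p-1}F
\;-\;(\text{correction}).
\]
Inserting this into the two relations from Step 2 should produce
\[
s\leq \Gamma+\frac{p-1}{p(p+1)}\,(c-n\Gamma),
\]
which contradicts the hypothesis. As a consistency check on the constant: for $p=1$ the coefficient vanishes, and the argument reduces to the observation that a moving curve on a surface $W$ with $F^2=0$ forces $s\leq\Gamma$. This matches the claim that the hypothesis is automatic for curves at the boundary.

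\textbf{Main obstacle.} The difficult step is the mixed-volume lower bound in Step 3 with exactly the constant $\frac{p-1}{p(p+1)}$. It must exploit both $F^2=0$ and the nefness of $N$, and log-concavity of the sequence $k\mapsto \alpha^{p+1-k}N^k$ restricted to fibers seems the natural tool. I would first carry out the computation in the model case $Z=V\times T$, where all intersection numbers factor explicitly, to identify which Teissier inequality yields the sharp constant. I would then transfer that inequality to a general fibration, using that $\alpha$ restricts to a Kähler class on every fiber.
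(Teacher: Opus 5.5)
\textbf{There is a genuine gap: Step 3 is the entire content of the proof, and you do not supply it.} Your Steps 1 and 2 are fine and match the paper's setup: the sweep-out $W$ of dimension $p+1$, a resolution fibred over $T$ with nef fibre class $F$, the two numerical relations, and the split $p+1<n$ versus $W=X$. Using $N=\beta-s_\alpha(\beta)\alpha$ (nef because the nef cone is closed) is a harmless variant of the paper's $\beta-\varepsilon\alpha$ with $\theta<\varepsilon<s_\alpha(\beta)$.

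What is needed is the reverse Khovanskii--Teissier inequality
\[
A^{p+1}\,(N\cdot F\cdot A^{p-1})\;\le\;2\,(N\cdot A^{p})(F\cdot A^{p}),
\]
with $A$ the pullback of $\alpha$. Equivalently, $\mu_{\alpha,N}(W)\ge\frac{p+1}{2p}\,\mu_{\alpha,N}(V_t)$. Feeding this into your two relations gives $\frac{p+1}{2}s\le c_{p+1}-\frac{p+1}{2p}c_p=\frac{p+1}{2}\theta$, where $\theta$ is the right-hand side of the hypothesis. So the constant $\frac{p-1}{p(p+1)}$ is exactly the footprint of the factor $2$.

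The paper proves this inequality by the Hodge index theorem. The form $q(u,v)=u\cdot v\cdot A^{p-1}$ has signature $(1,\ast)$. Write $N=aA+N_0$ and $F=bA+F_0$ with $q(N_0,A)=q(F_0,A)=0$. Nefness gives $q(N,N)\ge 0$ and $q(F,F)\ge 0$, hence $-q(N_0,N_0)\le a^2A^{p+1}$ and $-q(F_0,F_0)\le b^2A^{p+1}$. Cauchy--Schwarz for the negative definite form on the primitive hyperplane then gives $q(N_0,F_0)\le abA^{p+1}$, and so $q(N,F)\le 2abA^{p+1}$.

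\textbf{The route you sketch would not reach this.}
\begin{enumerate}
\item Khovanskii--Teissier/Alexandrov--Fenchel log-concavity bounds mixed terms from below. For example, $(N\cdot F\cdot A^{p-1})^2\ge (N^2\cdot A^{p-1})(F^2\cdot A^{p-1})$, which is vacuous here since $F^2=0$. You need an upper bound for $N\cdot F\cdot A^{p-1}$, and the factor $2$ cannot be dropped: on $\mathbb P^1\times\mathbb P^1$ with $A=H_1+H_2$, $N=H_1$, $F=H_2$, one has $A^2\,(N\cdot F)=2>1=(N\cdot A)(F\cdot A)$.
\item Your displayed target has the wrong coefficient. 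Since $F^2=0$ and $R^{p+1}=0$, your decomposition $\alpha=\lambda F+R$ gives the exact identity $N\alpha^p=\frac{p}{p+1}\frac{\alpha^{p+1}}{\alpha^pF}N\alpha^{p-1}F+N\cdot R^p$. The coefficient is therefore $\frac{p}{p+1}$, not $\frac{p+1}{p}$; with $\frac{p+1}{p}$ and no correction the estimate already fails on products. You would still have to prove $N\cdot R^p\ge-\frac{p-1}{2(p+1)}\frac{\alpha^{p+1}}{\alpha^pF}N\alpha^{p-1}F$ for the non-nef class $R$, and no Teissier-type inequality applies to it.
\item The model case $V\times T$ cannot identify the constant. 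With $N$ pulled back from $V$ one gets $N\cdot R^p=0$ and $\mu_{\alpha,N}(W)=\mu_{\alpha,N}(V)$, which is much better than $\frac{p+1}{2p}$. The constant in the theorem comes from the crude Hodge-index bound, not from a sharp computation on products.
\end{enumerate}
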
 

\begin{rem}
The nef-threshold condition above is genuinely quantitative, and its
range is constrained by a simple universal bound.  Indeed, at the
semistable boundary one always has
\[
s_\alpha(\beta)\leq \frac{c}{n},
\]
since $\beta-s\alpha$ nef implies
\[
0\leq (\beta-s\alpha)\cdot\alpha^{n-1}
=
\left(\frac{c}{n}-s\right)\alpha^n.
\]
Thus the condition
\[
s_\alpha(\beta)>
\frac{p-1}{p(p+1)}c
\]
is compatible with this bound whenever
\[
n(p-1)<p(p+1).
\]
In particular, it is automatic for $p=1$, has a nonempty numerical
range for $p=n-2,n-1$ in every dimension, and has such a range for
every $1\leq p\leq n-1$ when $n\leq5$.  In dimensions $n\leq5$, the
single convenient condition
\[
s_\alpha(\beta)>\frac{c}{6}
\]
therefore implies the rigidity hypothesis simultaneously in every
dimension.  In higher dimensions the criterion is more naturally
viewed dimension by dimension.
\end{rem}

\noindent The theorem is naturally formulated in terms of compact one-parameter families, since these are the families to which the sweep-out argument applies. In the projective setting, however, it has a stronger consequence. The Chow variety is projective, and hence any positive-dimensional deformation locus through a given cycle contains a complete curve. The absence of compact one-parameter families therefore becomes genuine isolation of the optimal cycle. Since a connected algebraic subgroup of $\mathrm{Aut}^0(X)$ acts trivially on cohomology, and hence preserves all the slopes appearing above, the same observation also turns rigidity into invariance under connected group actions. 

\begin{cor}\label{cor:projective-rigidity} 
Assume, in addition, that $X$ is projective, and let $1\leq p\leq n-1$ satisfy the hypothesis of Theorem~\ref{thm:threshold-rigidity}. Then every $p$-dimensional optimal subvariety is isolated in the Chow variety among irreducible reduced $p$-dimensional cycles. In particular, if $G\subset Aut^0(X)$ is a connected algebraic subgroup, then every such optimal subvariety is $G$-invariant. 
\end{cor}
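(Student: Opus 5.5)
We deduce Corollary~\ref{cor:projective-rigidity} directly from Theorem~\ref{thm:threshold-rigidity}, using the projectivity of the Chow variety. The argument has two parts: isolation in the Chow variety, then invariance under $G$.

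\textbf{Step 1: isolation.} Fix a $p$-dimensional optimal subvariety $V$ and let $[V]$ be its point in the Chow variety $\mathrm{Chow}_p(X)$. Since $X$ is projective, $\mathrm{Chow}_p(X)$ has countably many connected components, each of which is projective. Let $\mathcal C$ be the irreducible component through $[V]$, and let $\mathcal C^\circ\subset\mathcal C$ be the Zariski open subset parametrizing irreducible reduced cycles. Suppose $[V]$ is not isolated in $\mathcal C^\circ$. Then $\dim\mathcal C\geq 1$. Cutting $\mathcal C$ by general hyperplane sections through $[V]$ and normalizing, we obtain a complete irreducible curve $T\to\mathcal C$ whose image passes through $[V]$ and meets $\mathcal C^\circ$. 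So the general member $V_t$ is irreducible, and the family $\{V_t\}_{t\in T}$ is nonconstant and analytic.

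\textbf{Step 2: every member is optimal.} All fibres of a flat family of cycles over a connected base are homologous. Hence
\[
\int_{V_t}\alpha^p=\int_V\alpha^p \quad\text{and}\quad \int_{V_t}\beta\wedge\alpha^{p-1}=\int_V\beta\wedge\alpha^{p-1}
\]
for every $t\in T$, so $\mu_{\alpha,\beta}(V_t)=\mu_{\alpha,\beta}(V)$. Therefore the general member satisfies
\[
\frac{c-\mu_{\alpha,\beta}(V_t)}{n-p}=\Gamma^{\mathrm{pp}}_\beta(\alpha).
\]
This contradicts Theorem~\ref{thm:threshold-rigidity}, which proves isolation.

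\textbf{Step 3: $G$-invariance.} Let $G\subset\mathrm{Aut}^0(X)$ be connected and algebraic. The action of $G$ on $X$ induces an algebraic action on $\mathrm{Chow}_p(X)$ that preserves irreducible reduced cycles. Consider the orbit map $G\to\mathrm{Chow}_p(X)$, $g\mapsto[g(V)]$. Its image is irreducible, contains $[V]$, and lies in $\mathcal C^\circ$. By Step 1, $[V]$ is isolated in $\mathcal C^\circ$, so the orbit is the single point $[V]$, that is, $g(V)=V$ for all $g\in G$. We do not need separately that each $g(V)$ is optimal, although this also holds because $G$ acts trivially on cohomology.

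\textbf{Main obstacle.} The delicate step is the construction in Step 1 of a complete curve through $[V]$ whose general member is irreducible and reduced. One must take care that $T$ is not contained in the boundary $\mathcal C\setminus\mathcal C^\circ$ of degenerate cycles. This holds because $[V]\in\mathcal C^\circ$ is open in $\mathcal C$, so a general curve section through $[V]$ meets $\mathcal C^\circ$ in a dense open set. Taking the curve through a general point of $\mathcal C$ and specializing to $[V]$ would not do, since Theorem~\ref{thm:threshold-rigidity} only excludes families in which the general member is optimal.
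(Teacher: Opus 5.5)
Your proposal is correct and follows essentially the same route as the paper. Both arguments use curve selection in the projective Chow variety through $[V]$ with general point irreducible and reduced, then constancy of the homology class to make the general member optimal, then the contradiction with Theorem~\ref{thm:threshold-rigidity}, and finally the orbit argument for $G$. Your remark that optimality of $gV$ is not actually needed in the $G$-invariance step (isolation among irreducible reduced cycles suffices) is a valid small simplification of the paper's proof. Your closing side remark is slightly off, though: all points of the irreducible component $\mathcal C$ are homologous to $V$, so a curve through a general point of $\mathcal C$ would also have an optimal general member and would work too.
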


\noindent The group-action conclusion becomes particularly useful when combined with an attainment theorem for $\Gamma^{\mathrm{pp}}_\beta(\alpha)$. Suppose, for example, that \cite{ZD20} guarantees that a nontrivial boundary or unstable pair under consideration produces a proper optimal destabilizing subvariety. If a connected algebraic group $G\subset Aut^0(X)$ moves every proper subvariety, while the hypothesis of Theorem~\ref{thm:threshold-rigidity} holds in every possible dimension, then such an optimal destabilizer would have to be both moved by $G$ and fixed by $G$, which is impossible. Hence the corresponding boundary or unstable configuration cannot occur. Even when invariant proper subvarieties do exist, the conclusion is still restrictive. On a projective toric manifold, an optimal subvariety satisfying the hypothesis of Theorem~\ref{thm:threshold-rigidity} must be invariant under the algebraic torus, and is therefore one of the finitely many closures of torus orbits. On an abelian variety, the translation group acts trivially on cohomology, while no proper nonempty subvariety is invariant under every translation. Hence, under the same numerical hypothesis, no proper optimal destabilizing subvariety can occur.

\subsection{Comparison with the work of J. Liu} While this paper was in the final stages of being written up, two papers of J. Liu appeared on arXiv \cite{Liu1,Liu2}. The first of these papers, \cite{Liu1}, gives a rather complete picture on threefolds and in several directions goes further than our Corollary \ref{cor:three dim}. The second paper treats higher dimensions, but conditionally under certain technical conditions. In this regard our Theorem \ref{thm:finitely many curves and divisors} and Corollary \ref{cor:all dim} go further and in a different direction in dimension $n \geq 4$. Similarly, in the study of the singular behaviour of solutions to the J-equation Theorem \ref{prop:intro-forced-nonregularity} was proven in a stronger form by Liu on threefolds, but in higher dimensions our theorem is not covered by their work. The material of section \ref{sec:rigidity} is not covered by Liu. 

\subsection{Disclaimer about the use of AI} Sections 1–4 are human work, with the exception of AI assistance in drafting parts of the exposition, and in finalizing our proofs of Lemmas 4.1 and 4.2. Section 5 was developed with the assistance of ChatGPT 5.5 and 5.6 using standard prompting, without the use of agents.

\subsection{Acknowledgements} This work was supported by a Villum Young Investigator Grant from the Villum Foundation, project no. 60786.

\bigskip 
\section{Preliminaries}
\label{sec:preliminaries}
\noindent In this section we introduce our notation and recall some basic definitions which we use in the upcoming sections.
%We also collect a few conventions on cycles and their parameter spaces. 
All
subvarieties are assumed irreducible and reduced unless explicitly stated
otherwise.

 %We also use the stability of the K\"ahler property under modifications, in the form proved by Varouchas \cite{VarouchasKahler}. 
 %We shall distinguish carefully between the unconditional reverse Khovanskii--Teissier estimates and the stronger one-Rayleigh inequality introduced below, which is an additional hypothesis and does not hold for arbitrary nef triples, see \cite{HuXiao}.

\subsection{Modified K\"ahler classes and positivity conditions}
    We briefly introduce some of the basic definitions, which also has been introduced or studied in \cite{Bou02,Wu22,KSD2}, we refer the reader to the prior mentioned references for more details.
\begin{defn}
    The {\em minimal multiplicity} at $x\in X$ of the pseudo-effective class $\eta\in H^{1,1}(X,\RR)$  is defined as $\nu(\eta,x):=\sup_{\vep>0}\nu(T_{min,\vep},x) $, where $T_{min,\vep}\in\eta$ is a current with minimal singularities which satisfies $T_{\min,\vep}\geq-\vep\omega$ in the above definition and $\nu(T_{\min,\vep},x)$ is the Lelong number of  at $x$. 
    
    When $Z$ is an irreducible analytic subset, we define the generic minimal
multiplicity of $\eta$ along $Z$ as $\nu(\eta,Z):=\inf_{x\in Z}\nu(\eta,x)$.
\end{defn}

An important point to note here is that the definition is independent of the choice of the K\"ahler metric.

\begin{defn}
    Let $X$ be a compact K\"ahler manifold and $\eta\in H^{1,1}(X,\RR)$ be a pseudoeffective class. We say the class $\eta$ is {\em $p$-modified nef}, if $\nu(\eta,V)=0$, for all varieties $V\subsetneq X$ of dimension $k\geq p$, and a collection of such classes forms a closed cone, which we denote it by  $\sM_p \sN$. We call $\sM_p\sK$ the {\em $p$-modified K\"ahler} cone which is just the interior of $\sM_p \sN$. A class $\eta\in\sM_p\sK$ is called the $p$-modified K\"ahler class.
\end{defn}

We have the inclusions $$\sK=\sM_0\sK\subseteq\sM_1\sK\subseteq\sM_2\sK\subseteq\cdots\subseteq\sM_{n-1}\sK\subseteq\sM_n\sK=\mathfrak{Big}(X)$$
and for any big class $\eta\in\mathfrak{Big}(X)$, the non-K\"ahler locus of $\eta$ denoted by $E_{nK}$ is defined as 
$$ E_{nK}=\bigcap_{T\in \eta}E_+(T), $$
where the intersection is taken over all the positive currents $T$ with analytic singularities.
If $\xi\in\sM_{p+1}\sK$, then the dimension of the connected components of the analytic variety $ E_{nK}(\xi)$ is atmost $p$, which is proved as Lemma 2.3 in \cite{KSD2}.

\subsection{Cycles and slopes}
The cycle-space facts used below are standard, see \cite{BarletCycles,BarletMagnussonII}.
Let \(X\) be a compact Kähler manifold of dimension \(n\), and let
\(\alpha,\beta\in H^{1,1}(X,\mathbb R)\) be Kähler classes. If
\(V\subset X\) is an irreducible analytic subvariety of dimension \(p\),
we write
\[
\int_V \eta
:=
\langle [V],\eta\rangle
\]
for the pairing of its fundamental cycle with a closed \((p,p)\)-class
\(\eta\). Equivalently, if \(\eta\) is represented by a smooth form, this
is integration over \(V_{\mathrm{reg}}\).

We use the slope convention
\[
\mu_{\alpha,\beta}(V)
:=
p\frac{\int_V\beta\wedge\alpha^{p-1}}
{\int_V\alpha^p}.
\]
For \(X\) itself we write
\[
c=\mu_{\alpha,\beta}(X)
=
n\frac{\int_X\beta\wedge\alpha^{n-1}}
{\int_X\alpha^n}.
\]
Thus, $\mu_{\alpha,\beta}(V)=c$
is equivalent to $$
\int_V(c\alpha-p\beta)\wedge\alpha^{p-1}=0.$$

The pluripotential stability threshold is
\[
\Gamma^{pp}_{\beta}(\alpha)
:=
\inf_{V\subsetneq X}
\frac{1}{n-\dim V}
\bigl(\mu_{\alpha,\beta}(X)-\mu_{\alpha,\beta}(V)\bigr),
\]
where the infimum is over proper positive-dimensional irreducible
analytic subvarieties.

A proper $p$-dimensional irreducible subvariety $V\subset X$ is called optimal if \[ \frac{c-\mu_{\alpha,\beta}(V)}{n-p}=\Gamma^{\mathrm{pp}}_\beta(\alpha), \] or equivalently $ \mu_{\alpha,\beta}(V)=c-(n-p)\Gamma^{\mathrm{pp}}_\beta(\alpha). $ 

\begin{defn} Let $X$ be a K\"ahler manifold and $\al$ and $\be$ are two K\"ahler classes.
    \begin{enumerate}[i)]
        \item We call the pair $(\al,\be)$ {\em J-positive ( or stable )},   if $\mu_{\alpha,\beta}(V)< c,$ for every proper \(V\subsetneq X\). In this case, we have $\Gamma^{pp}_{\beta}(\alpha)>0.$
        \item We call the pair $(\al,\be)$ as {\em J-nef},   if $\mu_{\alpha,\beta}(V)\leq c,$ for every proper $V\subsetneq X$.  In this case, we have $\Gamma^{pp}_{\beta}(\alpha)\geq 0.$
        
        We then call a proper subvariety $V$ as {\em optimal} if $\mu_{\alpha,\beta}(V)=c.$
    \item We call the pair $(\al,\be)$ as {\em J-unstable},   if $\mu_{\alpha,\beta}(V) > c,$ for some proper $V\subsetneq X$.  In this case, we have $\Gamma^{pp}_{\beta}(\alpha)<0.$
    \end{enumerate}
\end{defn}

%\subsection{General optimality, shifts, and nef thresholds.} 

\subsubsection{Nef threshold:}
For every $d$-dimensional irreducible subvariety $Z\subset X$ and every $t\in\mathbb R$, one has \[ \mu_{\alpha,\beta+t\alpha}(Z) = \mu_{\alpha,\beta}(Z)+dt, \] and hence $\Gamma^{pp}_{\beta+t\alpha}(\alpha) = \Gamma^{pp}_{\beta}(\alpha)+t.$
In particular, the class \[ \beta_0:=\beta-\Gamma^{\mathrm{pp}}_\beta(\alpha)\alpha \] places the pair numerically at the boundary: $\Gamma^{pp}_{\beta_0}(\alpha)=0,$ and has the same optimal subvarieties as $(\alpha,\beta)$. Here this is a numerical identity; the class $\beta_0$ need not itself be K\"ahler. We shall also use the nef threshold of $\beta$ in the direction of $\alpha$, \[ s_\alpha(\beta) := \sup\{s\in\mathbb R:\ \beta-s\alpha\ \text{is nef}\}. \] 
Since $\alpha$ and $\beta$ are K\"ahler, $s_\alpha(\beta)>0$. Moreover, if $0<\varepsilon<s_\alpha(\beta)$, then $\beta-\varepsilon\alpha$ is K\"ahler. Indeed, one may choose $t>\varepsilon$ such that $\beta-t\alpha$ is nef and write \[ \beta-\varepsilon\alpha = (\beta-t\alpha)+(t-\varepsilon)\alpha. \]

\subsubsection{Moving towards the semistable but not stable case:}\label{subsec:linear}
We now relate the boundary statement to the original unstable pair using \cite[Lemma 14]{ZD20}. 
Consider the affine path
\[
\beta_t
:=
(1-t)\frac{c}{n}\alpha+t\beta,
\qquad
0\leq t\leq1.
\]
Its total slope is independent of $t$:
\[
\mu_{\alpha,\beta_t}(X)=c.
\]

\begin{lem}
\label{lem:optimal-independent-path}
For every proper $p$-dimensional subvariety $V\subset X$ and every
$t>0$,
\[
\frac{c-\mu_{\alpha,\beta_t}(V)}{n-p}
=
(1-t)\frac{c}{n}
+
t\frac{c-\mu_{\alpha,\beta}(V)}{n-p}.
\]
Consequently,
\[
\mathrm{Des}^{\mathrm{opt}}_{\alpha,\beta_t}(X)
=
\mathrm{Des}^{\mathrm{opt}}_{\alpha,\beta}(X)
\qquad
(t>0).
\]
\end{lem}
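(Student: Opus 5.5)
The identity is a direct computation from linearity of the slope in $\beta$. Everything is then a statement about affine maps with positive coefficient.

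\textbf{Step 1: slope of $V$ along the path.} For a $p$-dimensional $V$, the map $\beta\mapsto\int_V\beta\wedge\alpha^{p-1}$ is linear and the denominator $\int_V\alpha^p$ does not depend on $\beta$. Hence
\[
\mu_{\alpha,\beta_t}(V)=(1-t)\frac{c}{n}\,\mu_{\alpha,\alpha}(V)+t\,\mu_{\alpha,\beta}(V),
\qquad \mu_{\alpha,\alpha}(V)=p .
\]
So $\mu_{\alpha,\beta_t}(V)=(1-t)\frac{cp}{n}+t\mu_{\alpha,\beta}(V)$. The same computation with $V=X$ shows $\mu_{\alpha,\beta_t}(X)=c$, as already noted.

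\textbf{Step 2: the identity.} Writing $c=(1-t)c+tc$, we get
\[
c-\mu_{\alpha,\beta_t}(V)=(1-t)\Bigl(c-\frac{cp}{n}\Bigr)+t\bigl(c-\mu_{\alpha,\beta}(V)\bigr)
=(1-t)\frac{c(n-p)}{n}+t\bigl(c-\mu_{\alpha,\beta}(V)\bigr).
\]
Dividing by $n-p>0$ gives the displayed formula, with a constant term $(1-t)c/n$ that does not depend on $V$ or $p$. This independence is the key point, and it is why the normalization $1/(n-p)$ in $\Gamma^{pp}$ is the right one.

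\textbf{Step 3: optimal sets agree.} Set $F_t(V)=\frac{c-\mu_{\alpha,\beta_t}(V)}{n-p}$. Then $F_t=a_t+tF_1$ with $a_t=(1-t)c/n$ a constant. For $t>0$, the map $x\mapsto a_t+tx$ is strictly increasing, so taking the infimum over all proper $V$ of all dimensions gives $\Gamma^{pp}_{\beta}(\alpha)$ along the path as
\[
\Gamma^{pp}_{\beta_t}(\alpha)=a_t+t\,\Gamma^{pp}_\beta(\alpha),
\]
including the case where the infimum is $-\infty$. A subvariety $V$ then attains the infimum for $\beta_t$ if and only if $F_1(V)=\Gamma^{pp}_\beta(\alpha)$. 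Hence $\mathrm{Des}^{\mathrm{opt}}_{\alpha,\beta_t}(X)=\mathrm{Des}^{\mathrm{opt}}_{\alpha,\beta}(X)$.

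One bookkeeping point: if $\mathrm{Des}^{\mathrm{opt}}$ also requires non-positivity of the threshold, the sets still coincide at parameters where both thresholds are non-positive. This is the regime where the lemma is used, for $t$ up to the boundary parameter. I would otherwise read $\mathrm{Des}^{\mathrm{opt}}$ as the set of minimizers, and I would note explicitly that $\beta_t$ need not be Kähler for $t>1$. There is no real obstacle; the only care needed is verifying that the constant term is independent of $p$.
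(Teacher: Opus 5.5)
Your proof is correct and follows the paper's argument. Linearity gives $\mu_{\alpha,\beta_t}(V)=(1-t)\frac{pc}{n}+t\mu_{\alpha,\beta}(V)$, dividing by $n-p$ yields the identity, and a $V$-independent constant plus a positive multiple of the original quotient has the same minimizers. One small correction to your aside: under a sign-restricted reading of $\mathrm{Des}^{\mathrm{opt}}$ the two sets need not agree for $0<t<\tau$ (there $\Gamma^{pp}_{\beta_t}(\alpha)>0$), only for $t\geq\tau$. This is harmless, since the paper defines $\mathrm{Des}^{\mathrm{opt}}$ as the set of minimizers and applies the lemma at $t=\tau$.
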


\begin{proof}
Since $\dim V=p$,
\[
\mu_{\alpha,\beta_t}(V)
=
(1-t)\frac{pc}{n}
+
t\mu_{\alpha,\beta}(V).
\]
The displayed identity follows immediately. The first term on its
right-hand side is independent of $V$, while the second is a positive
multiple of the original destabilizing quotient. Hence the
minimizers are unchanged.
\end{proof}

Also, for the K\"ahler metrics $\chi\in\be$ and $\omega\in\al$ let
\[
\chi_t=(1-t)\frac{c}{n}\omega+t\chi, \qquad
\omega_t=\omega+\sqrt{-1}\,\partial\bar\partial\varphi_t,
\]
 and we consider the following continuity path equation
\begin{equation}\tag{$*_t$}\label{eq:path}
	\Lambda_{\omega_t}\chi_t=c, \qquad \varphi_t\in \mathcal{H}_{\omega}.
\end{equation}
The equation \eqref{eq:path} is solvable at $t=0$, with the solution $\varphi_0=0$. So we consider
\begin{equation}\label{eq:tau}
	\tau:=\sup\{t\in[0,1]:(*_t)\text{ is solvable at }t\}.
\end{equation}
If $\Gamma^{\mathrm{pp}}_{\beta}(\alpha)<0$, then we have $\tau<1$.
Also, by definition we get that
\[
\tau
=
\sup\left\{
t\in[0,1]:
\Gamma^{\mathrm{pp}}_{\beta_t}(\alpha)>0
\right\},
\] and hence $\Gamma^{\mathrm{pp}}_{\beta_\tau}(\alpha)=0.$

Here, the class $\be_\tau$ places the pair numerically at the boundary: $ \Gamma^{pp}_{\beta_\tau}(\alpha)=0,$ and has the same optimal subvarieties as $(\alpha,\beta)$.

\subsection{Families of cycles} 
A compact effective $p$-cycle on $X$ is a finite formal sum \( Z=\sum_i m_iV_i \) where $m_i\in\mathbb Z_{>0}$ and the $V_i$ are distinct irreducible $p$-dimensional analytic subvarieties. Its support is \[ |Z|:=\bigcup_iV_i. \] For a closed $(p,p)$-class $\eta$, we put \[ \int_Z\eta:=\sum_i m_i\int_{V_i}\eta. \] The slope extends to effective $p$-cycles by \[ \mu_{\alpha,\beta}(Z) := p\,\frac{\int_Z\beta\wedge\alpha^{p-1}} {\int_Z\alpha^p}. \] If \[ a_i:=\int_{V_i}\alpha^p>0, \] then \[ \mu_{\alpha,\beta}(Z) = \frac{\sum_i m_i a_i\mu_{\alpha,\beta}(V_i)} {\sum_i m_i a_i}. \] Thus, the slope of a cycle is a positive weighted average of the slopes of its irreducible components. In particular, if \[ \mu_{\alpha,\beta}(Z)=c-(n-p)\Gamma^{\mathrm{pp}}_\beta(\alpha), \] then every irreducible component of $Z$ is an optimal subvariety: the definition of $\Gamma^{\mathrm{pp}}_\beta(\alpha)$ gives \[ \mu_{\alpha,\beta}(V_i)\leq c-(n-p)\Gamma^{\mathrm{pp}}_\beta(\alpha) \] for every $i$, and equality of the weighted average forces equality term by term. Let $\mathcal C_p(X)$ denote the Barlet space of compact $p$-cycles on $X$. An analytic family of $p$-cycles over a reduced complex space $T$ is a holomorphic map \( \tau:T\longrightarrow\mathcal C_p(X)\), \(t\longmapsto Z_t\). The fundamental homology class $[Z_t]$ is locally constant in $t$. Hence, if $T$ is connected, then for every closed $(p,p)$-class $\eta$ the function \[ t\longmapsto\int_{Z_t}\eta \] is constant, and so is $\mu_{\alpha,\beta}(Z_t)$. This constancy holds at every parameter value. The qualification ``for general $t$'' used below refers only to geometric properties such as irreducibility or reducedness of the fiber, which may fail at special parameter values. 

\subsection{Universal families and sweep-outs}
The Barlet space carries a tautological family of cycles. Pulling it back by $\tau:T\to\mathcal C_p(X)$ gives an analytic cycle \[ \mathcal U_T\subset T\times X \] with projections \[ f_0:\mathcal U_T\longrightarrow T, \qquad q_0:\mathcal U_T\longrightarrow X, \] whose cycle-theoretic fiber over $t$ is $Z_t$. Suppose now that $T$ is an irreducible compact curve and that the general cycle $Z_t=V_t$ is irreducible and reduced. Let $\mathcal U_T^{\mathrm{main}}$ be the unique irreducible component of the support of $\mathcal U_T$ which dominates $T$. We define the \emph{sweep-out} of the family by 
\[ W := \bigl(q_0(\mathcal U_T^{\mathrm{main}})\bigr)_{\mathrm{red}} \subset X. \] Equivalently, $W$ is the closure in $X$ of the union of the supports of the general members $V_t$. Since $q_0$ is proper, $W$ is a compact analytic subvariety. If the supports of the general fibers are not all equal, then $\dim W=p+1,$ and \[ q_0:\mathcal U_T^{\mathrm{main}}\longrightarrow W \] is generically finite. Let \[ \nu:Y\longrightarrow\mathcal U_T^{\mathrm{main}} \] be a resolution, and set \[ f:=f_0\circ\nu:Y\longrightarrow T, \qquad g:=q_0\circ\nu:Y\longrightarrow W\subset X. \] Then $Y$ is a smooth compact K\"ahler manifold of dimension $p+1$, the map $g$ is generically finite onto $W$, and a general fiber \[ F_t:=f^{-1}(t) \] maps generically finitely onto $V_t$. Choose a K\"ahler class $\ell$ on $T$ normalized by \[ \int_T\ell=1. \] For \[ A:=g^*\alpha, \qquad C:=g^*\delta, \qquad L:=f^*\ell, \] where $\delta\in H^{1,1}(X,\mathbb R)$, the projection formula gives the following degree-cancellation identities. If $d_W$ is the generic degree of $g$ and $d_t$ that of $F_t\to V_t$, then \[ A^{p+1} = d_W\int_W\alpha^{p+1}, \qquad C\cdot A^p = d_W\int_W\delta\wedge\alpha^p, \] and, for general $t$, \[ L\cdot A^p = d_t\int_{V_t}\alpha^p, \qquad C\cdot L\cdot A^{p-1} = d_t\int_{V_t}\delta\wedge\alpha^{p-1}. \] Consequently, 
\[ \mu_{\alpha,\delta}(W) = (p+1)\frac{C\cdot A^p}{A^{p+1}}, \qquad \mu_{\alpha,\delta}(V_t) = p\frac{C\cdot L\cdot A^{p-1}}{L\cdot A^p}. \] 
The class $A$ is nef and big, since $g$ is generically finite and $A^{p+1}>0$. If $\delta$ is nef, then $C$ is nef. Moreover, $L$ is nef, and we have $L^2=0$ and $L\cdot A^p>0$.

\subsection{The projective case and connected group actions} Suppose that $X$ is projective and fix an embedding $X\hookrightarrow\mathbb P^N$. For fixed dimension $p$ and degree $d$, the Chow variety \( \operatorname{Chow}_{p,d}(X) \) is projective and carries its universal cycle. Let \[ \operatorname{Chow}^{\mathrm{irr}}_{p,d}(X) \subset \operatorname{Chow}_{p,d}(X) \] denote the subset parametrizing irreducible reduced cycles. If an irreducible cycle $[V]$ is not isolated in this subset, algebraic curve selection provides an integral projective curve \( C\subset\operatorname{Chow}_{p,d}(X) \) through $[V]$ whose general point belongs to $\operatorname{Chow}^{\mathrm{irr}}_{p,d}(X)$. After normalizing $C$, the restricted universal cycle gives an analytic family over a smooth compact curve to which the preceding discussion applies. By the functoriality of the Chow scheme for algebraic families of cycles
\cite[Theorem~I.3.21]{Kol96}, an algebraic action
\(G\times X\to X\) induces an algebraic action on the corresponding
Chow spaces by
\[
(g,[Z])\longmapsto [gZ].
\] Since every element of $G$ is homotopic to the identity, $G$ acts trivially on cohomology. It therefore preserves $\alpha$, $\beta$, and the slope $\mu_{\alpha,\beta}$. For an irreducible cycle $V$, the orbit $G\cdot[V]$ consists entirely of irreducible cycles of the same numerical class. In particular, if this orbit is positive-dimensional, then $[V]$ is not isolated among irreducible cycles.

\section{Finitely many optimally destabilizing curves}
    
\noindent In this section we first prove finiteness of optimally destabilizing \emph{curves} on $X$ of arbitrary dimension. We do this by observing that the class $c\al-\be$ is always big on $X$ if the pair $(\al,\be)$ is J-nef. On complex surfaces this follows easily, as we can reduce the J-equation to the complex Monge-Amp\'ere equation which is also proved in \cite{KSD1}, but in the higher dimensions we need the following lemma.
    
\begin{lem}\label{lem:positive volume}
    Let $X$ be a compact K\"ahler manifold of complex dimension $n\ge 2$, and let $\alpha,\beta$ be K\"ahler classes. Suppose that $\mu_{\al,\be}(V)\leq c$ for all proper irreducible subvarieties $V\subset X$, where 
\[\label{eq:c}
	c:=n\frac{\beta\cdot\alpha^{n-1}}{\alpha^n}.
\]
Then we have
\[\label{eq:positive-top}
	\int_X(c\alpha-\beta)^n>0.
\]
\end{lem}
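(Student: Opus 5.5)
The plan is to realise $c\alpha-\beta$ as a limit of Kähler classes coming from solutions of the J-equation along the affine path of Section~\ref{subsec:linear}. This gives nefness, hence $\int_X(c\alpha-\beta)^n\geq 0$. The real work is then to upgrade this to strict positivity.

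\emph{Step 1 (a pointwise observation).} Suppose $\omega'\in\alpha$ solves $\operatorname{tr}_{\omega'}\chi'=c$ with $\chi'>0$. Let $\lambda_1,\dots,\lambda_n$ be the eigenvalues of $\chi'$ with respect to $\omega'$. They are positive and sum to $c$, so each satisfies $\lambda_i<c$. Hence $c\omega'-\chi'$ is a Kähler form, and the class $c\alpha-[\chi']$ is Kähler.

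\emph{Step 2 (solvability along the path).} By Lemma~\ref{lem:optimal-independent-path}, the threshold along $\beta_t=(1-t)\frac{c}{n}\alpha+t\beta$ satisfies
\[
\Gamma^{\mathrm{pp}}_{\beta_t}(\alpha)=(1-t)\frac{c}{n}+t\,\Gamma^{\mathrm{pp}}_\beta(\alpha)\geq (1-t)\frac{c}{n}>0
\qquad (0\leq t<1),
\]
using the J-nef hypothesis. By the solution of the Lejmi--Sz\'ekelyhidi conjecture \cite{So20,Che21}, the equation $(*_t)$ is solvable for every $t<1$. Step~1 then shows that $c\alpha-\beta_t$ is Kähler for all $t<1$. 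Letting $t\to1$, the class $c\alpha-\beta$ is nef, and by continuity of intersection numbers $\int_X(c\alpha-\beta)^n\geq0$.

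\emph{Step 3 (strictness; the main obstacle).} Set $\gamma=c\alpha-\beta$ and suppose, for contradiction, that $\gamma^n=0$. We do know
\[
\gamma\cdot\alpha^{n-1}=\tfrac{n-1}{n}c\,\alpha^n>0,
\]
but Khovanskii--Teissier only gives upper bounds in this situation, so a different input is needed. I would argue on the solutions $\omega_t$ of $(*_t)$ and write
\[
\int_X(c\alpha-\beta_t)^n=\int_X\prod_i(c-\lambda_i^{(t)})\,\omega_t^n .
\]
The numbers $x_i=c-\lambda_i^{(t)}\in(0,c)$ satisfy $\sum_i x_i=(n-1)c$. Elementary analysis shows that $\prod_i x_i$ can only become small where the largest eigenvalue $\lambda_{\max}^{(t)}$ tends to $c$, forcing all other eigenvalues towards $0$. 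So $\gamma^n=0$ means that, as $t\to1$, the $\omega_t$-volume concentrates on the region where $\chi_t$ becomes rank-one relative to $\omega_t$.

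Following Chen's mass-concentration approach (the same current $T_{\mathrm{mc}}$ used for Theorem~\ref{thm:intro-analytic-envelope}), I would extract from this degeneration a weak limit current. Its Lelong upper-level set should produce a proper subvariety $V$ on which the slope inequality $\mu_{\alpha,\beta}(V)\leq c$ fails strictly. This contradicts the J-nef hypothesis. Alternatively, one could try to show via Demailly--P\u{a}un that a nef, non-big $\gamma$ has positive-dimensional null locus covering $X$, and test the slope inequality on a suitable subvariety there.

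I expect this strictness step to be the hardest part. Nefness alone cannot give it, so the J-nef condition on subvarieties must enter quantitatively.
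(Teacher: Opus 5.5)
\paragraph{Gap in Step 3.} Your Steps 1 and 2 are correct, but they only give $\int_X(c\alpha-\beta)^n\geq 0$. They are essentially the nefness argument the paper uses in the corollary right after this lemma. The strict inequality is the whole content of the lemma, and it is not proved: Step 3 is a heuristic. You give no mechanism by which a weak limit of the $\omega_t$, or Chen's current, would produce a proper subvariety $V$ with $\mu_{\alpha,\beta}(V)>c$. The Demailly--P\u{a}un alternative does not help either, since non-bigness of a nef class does not by itself single out a proper subvariety on which the slope inequality fails.

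More importantly, the premises steering Step 3 are wrong. The inequality $\int_X(c\alpha-\beta)^n>0$ holds for \emph{all} K\"ahler classes $\alpha,\beta$; the paper's proof never uses the hypothesis $\mu_{\alpha,\beta}(V)\leq c$. So the J-nef condition does not need to ``enter quantitatively''. Also, Khovanskii--Teissier does give the lower bound once you apply it to the pair $(\alpha,\beta)$ rather than to $\gamma$.

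\paragraph{The missing idea.} This is the paper's argument. Put $S_k=\alpha^{n-k}\cdot\beta^k$. Log-concavity $S_k^2\geq S_{k-1}S_{k+1}$ gives $S_{k+1}/S_k\leq S_1/S_0=c/n$ for all $k$. Expand $\int_X(c\alpha-\beta)^n=\sum_k(-1)^k\binom nk c^{n-k}S_k$ and group the terms $k=2j,2j+1$. Each pair equals
\[
\binom{n}{2j}c^{n-2j-1}S_{2j}\Bigl(c-\frac{n-2j}{2j+1}\,\frac{S_{2j+1}}{S_{2j}}\Bigr)\;\geq\;\binom{n}{2j}c^{n-2j}S_{2j}\,\frac{2j(n+1)}{n(2j+1)}.
\]
The $j=0$ pair vanishes, every pair with $j\geq1$ is strictly positive, and for even $n$ the leftover term is $\beta^n>0$.

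If you prefer to stay within your PDE framework, do the same computation pointwise. Newton's inequalities for the eigenvalues $\lambda^{(t)}_i>0$, which satisfy $e_1(\lambda^{(t)})=c$, give
\[
\prod_i\bigl(c-\lambda^{(t)}_i\bigr)\geq\frac{2(n+1)}{3n}\,c^{n-2}\,e_2\bigl(\lambda^{(t)}\bigr)\qquad(n\geq3).
\]
Integrating yields $\int_X(c\alpha-\beta_t)^n\geq\frac{n^2-1}{3}c^{n-2}\,\beta_t^2\cdot\alpha^{n-2}$, a bound that survives $t\to1$. For $n=2$ one has exactly $\int_X(c\alpha-\beta_t)^2=\beta_t^2$. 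In particular, the volume-concentration scenario you describe in Step 3 cannot occur.
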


\begin{proof}

Consider the following shorthand notation: For any $0\le k\le n$ write	$S_k:=\alpha^{n-k}\cdot\beta^k $
and for $1\le k\le n$ define
\[	r_k:=\frac{S_k}{S_{k-1}}.\]

By the Khovanskii--Teissier inequalities for nef classes $\alpha,\beta$, the sequence $k\mapsto\log(S_k)$ is, concave or equivalently $S_k^2\geq S_{k-1}S_{k+1}$. Hence, the sequence $(r_k)_{k=1}^n$ is non-increasing, and in particular we deduce that
\begin{equation}\label{eq:r-bound}
	r_{k+1}\le r_1=\frac{S_1}{S_0}=\frac{\beta\cdot\alpha^{n-1}}{\alpha^n}=\frac{c}{n}
\end{equation}
holds, for all $0\le k\le n$.

We now expand and regroup the binomial terms in consecutive pairs as follows
\begin{align*}
	\int_X(c\alpha-\beta)^n
	&=\sum_{k=0}^n(-1)^k\binom{n}{k}c^{n-k}S_k \\
	&=\sum_{j=0}^{\lfloor(n-1)/2\rfloor}\left(\binom{n}{2j}c^{n-2j}S_{2j}-\binom{n}{2j+1}c^{n-2j-1}S_{2j+1}\right)
	+\mathbf{1}_{\{n\text{ even}\}}S_n. 
\end{align*}
where $\mathbf{1}_{\{n\text{ even}\}}$ is the indicator function. Each term in brackets factors as follows,
\[
	\binom{n}{2j}c^{n-2j-1}S_{2j}\left(c-\frac{n-2j}{2j+1}\frac{S_{2j+1}}{S_{2j}}\right) =\binom{n}{2j}c^{n-2j-1}S_{2j}\left(c-\frac{n-2j}{2j+1}r_{2j+1}\right).	
\]
Imposing \eqref{eq:r-bound} we deduce from this a uniform lower bound
\[
	c-\frac{n-2j}{2j+1}r_{2j+1}
	\ge c-\frac{n-2j}{2j+1}\frac{c}{n}
	=\frac{c}{n}\cdot\frac{2j(n+1)}{2j+1}.
\]
Hence for every $j\ge 1$, we have
\[
	\binom{n}{2j}c^{n-2j}S_{2j}-\binom{n}{2j+1}c^{n-2j-1}S_{2j+1}
	\ge \binom{n}{2j}c^{n-2j-1}S_{2j}\cdot\frac{c}{n}\cdot\frac{2j(n+1)}{2j+1}>0.
\]
In the case $j=0$, the expression in the brackets is non-negative. Finally, if $n$ is even, there is also the leftover term $S_n=\beta^n$, which is positive since $\beta$ is assumed to be a K\"ahler class. Summing these contributions, we conclude that
\[
	\int_X(c\alpha-\beta)^n>0,
\]
finishing the proof.
\end{proof}

\begin{cor}
    Let $\alpha$ and $\beta$ be two K\"ahler classes on a K\"ahler manifold $X$ with $\dim X>2$ such that $(\alpha,\beta)$ is J-nef. Then $c\alpha-\beta$ is big and modified K\"ahler. Moreover, $c\alpha-\beta\in \mathcal{M}_2\mathcal{K}$.
\end{cor}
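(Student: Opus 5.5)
Bigness of $c\alpha-\beta$ should come from Lemma~\ref{lem:positive volume} combined with a Demailly--P\u{a}un/Song type numerical characterization. Set $\gamma:=c\alpha-\beta$. The key observation is that J-nefness controls the restriction of $\gamma$ to every proper subvariety. If $V\subsetneq X$ has dimension $p$, then $\mu_{\alpha,\beta}(V)\le c$ gives
\[
\int_V\gamma\wedge\alpha^{p-1}=\int_V(c\alpha-\beta)\wedge\alpha^{p-1}\;\ge\;\int_V\Bigl(c-\tfrac{c}{p}\Bigr)\alpha^p\;\geq\;0,
\]
and this is strictly positive when $p\ge2$. I would first check that $\gamma$ is pseudoeffective, or at least has nonnegative $\alpha$-degree: $\int_X\gamma\wedge\alpha^{n-1}=(c-c/n)\alpha^n>0$. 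Bigness should then follow from $\int_X\gamma^n>0$ (Lemma~\ref{lem:positive volume}) via a holomorphic Morse/Demailly--P\u{a}un type argument.

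The concrete route I would take is the following. For $t\ge0$, the class $\gamma_t:=\gamma+t\alpha$ is K\"ahler for $t\ge t_0$ large. Let $t_*:=\inf\{t:\gamma_s \text{ is K\"ahler for all } s>t\}$. At $t=t_*$ the class $\gamma_{t_*}$ is nef. If $t_*\le 0$, then $\gamma$ is nef with $\gamma^n>0$, hence big. If $t_*>0$, apply the Lemma to the pair $(\alpha,\beta-t\alpha)$ along the segment; this shifts the slopes uniformly as in the nef-threshold identities, so every $(c_t\alpha-\beta_t)^n$ stays positive along the segment. Together with Demailly--P\u{a}un, bigness of $\gamma=\gamma_{t_*}-t_*\alpha$ is then obtained by the Boucksom/Xiao criterion
\[
\gamma_{t_*}^n-n\,\gamma_{t_*}^{n-1}\cdot t_*\alpha>0,
\]
with the needed inequality extracted by the same regrouping of binomial terms as in Lemma~\ref{lem:positive volume}. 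I expect this step to be the main obstacle. If it fails, my first fallback is Song's theorem: J-nefness means $\alpha$ solves the J-equation for $\beta$ after a small perturbation, and this should give directly a positive current in $\gamma$ that is bounded below by a multiple of $\omega$.

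For the modified K\"ahler and $\mathcal{M}_2\mathcal{K}$ statements, I would show that the non-K\"ahler locus $E_{nK}(\gamma)$ contains no subvariety of dimension $\ge2$. By Collins--Tosatti, $E_{nK}(\gamma)$ is the union of subvarieties $V$ with $\int_V\gamma^{\dim V}=0$ (the null locus). For $V$ of dimension $p\ge2$, the restricted pair $(\alpha|_V,\beta|_V)$ on a resolution of $V$ is again J-nef with the same constant $c\ge\mu(V)$, since sub-subvarieties of $V$ are subvarieties of $X$. Applying Lemma~\ref{lem:positive volume} on that resolution, with $c$ in place of the slope of $V$ (which only helps, since $\gamma$ increases with $c$), gives $\int_V\gamma^p>0$. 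Hence the null locus consists only of curves and points, i.e.\ $\nu(\gamma,V)=0$ for every $V$ of dimension $\ge2$. Then $\gamma\in\mathcal{M}_2\mathcal{K}\subset\mathcal{M}_{n-1}\mathcal{K}$, which is the modified K\"ahler property since $\dim X>2$.
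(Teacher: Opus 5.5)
Your proposal has a genuine gap: you never prove that $\gamma=c\alpha-\beta$ is nef, and both halves of your argument depend on it. Demailly--P\u{a}un turns $\int_X\gamma^n>0$ into bigness only for nef classes; for $n\geq3$, positivity of $\int_X\gamma^n$ and of $\int_X\gamma\wedge\alpha^{n-1}$ does not by itself force bigness. The Collins--Tosatti identification of $E_{nK}(\gamma)$ with the null locus likewise requires $\gamma$ to be nef and big.

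Your $t_*$-dichotomy does not supply nefness. In the branch $t_*>0$ you apply Lemma~\ref{lem:positive volume} to $(\alpha,\beta-t\alpha)$, but $\beta-t\alpha$ need not be K\"ahler or even nef, so the Khovanskii--Teissier concavity behind that lemma is unavailable. Moreover, for this pair $c'\alpha-\beta'=\gamma-(n-1)t\alpha$, which moves away from $\gamma_{t_*}$ rather than towards it. The Xiao-type inequality $\gamma_{t_*}^n>n\,t_*\,\gamma_{t_*}^{n-1}\cdot\alpha$ is simply left open. Your fallback also overclaims: perturbing the J-equation does not give a positive current in $\gamma$ bounded below by a multiple of $\omega$. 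Such a current would be a K\"ahler current, i.e.\ bigness itself, and the lower bounds degenerate in the limit.

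The missing idea is the correct use of Song's theorem \cite{So20} along the path $\beta_t=(1-t)\frac{c}{n}\alpha+t\beta$, which is what the paper does.
\begin{enumerate}
\item By Lemma~\ref{lem:optimal-independent-path}, J-nefness gives $\frac{c-\mu_{\alpha,\beta_t}(V)}{n-p}\geq(1-t)\frac{c}{n}>0$ for every proper $V$ and every $t<1$.
\item Hence $\operatorname{tr}_{\omega_t}\chi_t=c$ is solvable. The eigenvalues of $\chi_t$ with respect to $\omega_t$ are positive and sum to $c$, so each is less than $c$. Thus $c\omega_t-\chi_t>0$ and $c\alpha-\beta_t$ is K\"ahler.
\item Letting $t\to1$ shows that $\gamma$ is nef, so in fact $t_*\leq0$ always.
\end{enumerate}
Then Lemma~\ref{lem:positive volume} with Demailly--P\u{a}un gives bigness. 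After that, your restriction argument together with Collins--Tosatti gives $\gamma\in\mathcal{M}_2\mathcal{K}$, as in the paper.

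Two smaller repairs are needed there. First, the claim that $\int_V(c\alpha-\beta)^p$ increases with $c$ is not automatic without nefness, but you do not need it. The proof of Lemma~\ref{lem:positive volume} on a resolution of $V$ works verbatim with any constant $c'\geq\mu_{\alpha,\beta}(V)$: the $j=0$ bracket becomes $c'-\mu_{\alpha,\beta}(V)\geq0$, and the $j\geq1$ brackets only use $r_{2j+1}\leq r_1\leq c'/p$. Second, $\nu(\gamma,V)=0$ for all $V$ with $\dim V\geq2$ only places $\gamma$ in the closed cone $\mathcal{M}_2\mathcal{N}$. Passing to the interior needs an openness argument; the paper invokes \cite[Lemma 2.3]{KSD2} here. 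One way is via a K\"ahler current in $\gamma$ with analytic singularities exactly along $E_{nK}(\gamma)$, which survives small perturbations of the class.
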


\begin{proof}
   Given that $(\al,\be)$ is J-nef, this implies that $\Gamma^{pp}_{\be_t}(\al)>0,$ for all $t\in[0,1).$ Therefore, by Corollary 1.2 of \cite{So20}, $c\al-\be_t$ is K\"ahler, for all $t\in[0,1)$, so $c\al-\be$ is nef.
    Then by Lemma \ref{lem:positive volume} and \cite[Theorem 2.12]{DP04}, we see that $c\alpha-\beta$ is big.

To prove the modified K\"ahler condition, let $V\subset X$ be any variety with $2\leq\dim V\leq n-1$. Then a calculation similar to Lemma \ref{lem:positive volume} using Khovanskii-Teissier inequality on $V$ implies that
\[
\int_V(c\alpha-\beta)^{\dim V}>0.
\]
Then by \cite[Lemma 2.3]{KSD2}, since for nef and big classes the non-Kähler locus equals the null locus, we have $c\alpha-\beta\in \mathcal{M}_2\mathcal{K}$. Thus, $c\alpha-\beta$ is a $2$-modified K\"ahler class.
\end{proof}

\noindent As a consequence, we have the following corollary, which we stated at the beginning of this section.

\begin{cor}\label{cor:finite curves}
	Let \(X\) be a compact Kähler manifold of dimension \(n>2\), and let \(\alpha,\beta\) be Kähler classes such that \((\alpha,\beta)\) is J-nef. Then the non-Kähler locus
$E_{\rm nK}(c\alpha-\beta) $
has no components of dimension two or higher and contains only finitely many curves. These curves are precisely the optimally destabilizing curves of \((X,\alpha,\beta)\).
%Assume that $(\al,\be)$ is $J$-nef. Then the non-K\"ahler locus $E_{nK}(c\alpha - \beta)$ contains only finitely many curves, and no higher-dimensional components, and these curves are precisely the optimally destabilizing curves of the pair $(\al,\be)$.
\end{cor}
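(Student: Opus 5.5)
The plan is to identify the non-K\"ahler locus of the nef and big class $\eta:=c\alpha-\beta$ with its null locus, and to read off both statements from the preceding corollary.

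By that corollary, $\eta$ is nef, big, and lies in $\mathcal M_2\mathcal K$. Hence Lemma 2.3 of \cite{KSD2} shows that every connected component of $E_{\rm nK}(\eta)$ has dimension at most $1$. By the result of Collins--Tosatti for nef and big classes,
\[
E_{\rm nK}(\eta)=\mathrm{Null}(\eta)=\bigcup_{\int_V\eta^{\dim V}=0}V,
\]
where the union runs over positive-dimensional irreducible subvarieties $V$. This set is a proper analytic subset of $X$, so it has finitely many irreducible components. Since it has no components of dimension $\ge2$, it consists of finitely many curves together with possibly some isolated points. This proves the first two claims.

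For the identification with the optimally destabilizing curves, note that J-nefness gives $\Gamma^{pp}_\beta(\alpha)\ge0$. I would split into two cases.

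\textbf{Case $\Gamma^{pp}_\beta(\alpha)=0$.} A curve $C$ is optimal exactly when $\mu_{\alpha,\beta}(C)=c$. Since $p=1$, this reads $\int_C(c\alpha-\beta)=0$, i.e.\ $\int_C\eta=0$, which is exactly the condition for $C\subset\mathrm{Null}(\eta)$. Conversely, any curve contained in $E_{\rm nK}(\eta)=\mathrm{Null}(\eta)$ is an irreducible component, hence a maximal null subvariety, so $\int_C\eta=0$ and $C$ is optimal.

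\textbf{Case $\Gamma^{pp}_\beta(\alpha)>0$.} Every curve satisfies $\int_C\eta>0$. Moreover, every $V$ of dimension $\ge2$ satisfies $\int_V\eta^{\dim V}>0$, by the corollary's Khovanskii--Teissier computation, which applies on each such $V$. Hence the null locus is empty, and there is nothing to match except the optimal curves. Here the statement's ``precisely'' should be read as referring to curves attaining slope $c$, i.e.\ optimal in the J-nef sense of the Definition, where a subvariety is called optimal when $\mu_{\alpha,\beta}(V)=c$. Under that convention both sets are empty and the claim holds. If instead $\Gamma^{pp}_\beta(\alpha)>0$ and optimality is meant in the $\Gamma$-attaining sense, I would replace $\beta$ by $\beta_0=\beta-\Gamma^{pp}_\beta(\alpha)\alpha$. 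This class is not necessarily K\"ahler, so that reduction would need care. I would therefore state the result using the J-nef definition of optimality.

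\textbf{Main obstacle.} The delicate point is justifying the equality $E_{\rm nK}=\mathrm{Null}$ and checking that no positive-dimensional null subvariety of dimension $\ge2$ can hide a curve that is not an irreducible component. The first is handled by citing Collins--Tosatti, as is done implicitly in the corollary. The second follows because every null curve lies in the null locus, and that locus has only one-dimensional components, so every null curve is one of them.
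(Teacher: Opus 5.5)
Your proposal is correct and follows essentially the same route as the paper, which gives no separate proof but reads the statement off the preceding corollary: $c\alpha-\beta$ is nef, big and lies in $\mathcal M_2\mathcal K$, so \cite[Lemma 2.3]{KSD2} bounds the dimension of $E_{\rm nK}(c\alpha-\beta)$, this locus coincides with the null locus for nef and big classes, and a curve $C$ is null exactly when $\mu_{\alpha,\beta}(C)=c$. Your separate treatment of the case $\Gamma^{pp}_\beta(\alpha)>0$, and your remark on which notion of optimality the word ``precisely'' requires, make explicit a point the paper leaves implicit.
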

\medskip
%\noindent\textbf{Remark 6.} \emph{The above corollary implies that the non-K\"ahler locus of the class $c\alpha-\beta$ denoted by $E_{\mathrm{nK}}(c\alpha-\beta)$ contains only finitely many curves.}

\medskip 

\section{A fixed analytic set containing the optimal destabilizing locus}
\label{sec:optimal-analytic-envelope}

\noindent With the goal of proving finiteness of optimally destabilizing divisors in arbitrary dimension, we show that the optimal destabilizing subvarieties are contained in a proper analytic set of $X$, by drawing closely on the techniques of G. Chen \cite{Che21}. 

Let $X$ be a compact K\"ahler manifold of dimension $n$, and let
$\alpha,\beta\in H^{1,1}(X,\mathbb R)$ be K\"ahler classes. Fix
K\"ahler representatives
\(
\omega \in \alpha,
\chi\in\beta,
\)
and write
\[
c:=\mu_{\alpha,\beta}(X)
=
n\frac{\beta\cdot\alpha^{n-1}}{\alpha^n}.
\]

For a smooth positive $(1,1)$-form $A$, set
\[
P_\chi(A)(x)
:=
\max_{\substack{H\subset T_x^{1,0}X\\ \dim_{\mathbb C}H=n-1}}
\operatorname{tr}_{A|_H}(\chi|_H).
\]
Thus
\[
P_\chi(A)<c
\]
is equivalent to the strict cone condition
\[
cA^{n-1}-(n-1)\chi\wedge A^{n-2}>0.
\]
For positive currents, we use the local-convolution interpretation of
this inequality introduced in \cite[Definition~3.3]{Che21}.
For an arbitrary pair $(\alpha,\beta)$, define
\[
\mathrm{Des}^{\mathrm{opt}}_{\alpha,\beta}(X)
:=
\left\{
V\subsetneq X:
\frac{c-\mu_{\alpha,\beta}(V)}
     {n-\dim V}
=
\Gamma^{\mathrm{pp}}_\beta(\alpha)
\right\}.
\]
We call
\[
\mathcal D^{\mathrm{opt}}_{\alpha,\beta}
:=
\bigcup_{V\in
\mathrm{Des}^{\mathrm{opt}}_{\alpha,\beta}(X)}
V
\]
the optimal destabilizing locus. When
$\Gamma^{\mathrm{pp}}_\beta(\alpha)=0,$
one has
\[
\mathrm{Des}^{\mathrm{opt}}_{\alpha,\beta}(X)
=
\{V\subsetneq X:\mu_{\alpha,\beta}(V)=c\}.
\]

\subsection{The mass-concentration current}

We first record the two consequences of G. Chen's \cite{Che21} argument, which will be
used below.

\begin{lem}
\label{lem:boundary-mass-concentration}
Assume that
${\Gamma^{\mathrm{pp}}_\beta(\alpha) = 0}.$
Then the following hold.

\begin{enumerate}
\item For every $s>0$, there exists a smooth K\"ahler form
$\Omega_s\in(1+s)\alpha$
such that
\[
P_\chi(\Omega_s)<c.
\]

\item There exist $\varepsilon_0>0$ and a closed positive current
$S\in\alpha-\varepsilon_0\beta$ 
such that
$P_\chi(S)\leq c$
in the local-convolution sense.
Consequently, the current
$T_{\mathrm{mc}}:=S+\varepsilon_0\chi\in\alpha$ 
satisfies
\[
    T_{\mathrm{mc}}\geq\varepsilon_0\chi \qquad \text{ and }\qquad
P_\chi(T_{\mathrm{mc}})
\leq
c_{\varepsilon_0}
:=
\frac{c}
{1+\frac{\varepsilon_0c}{n-1}}
<c
\]

in the local-convolution sense.
\end{enumerate}
\end{lem}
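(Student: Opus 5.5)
Part (1) follows from the path $\beta_t$ and Song's theorem, and part (2) from a compactness argument in Chen's framework.

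\emph{Part (1).} Since $\Gamma^{\mathrm{pp}}_\beta(\alpha)=0$, scaling gives, for $s>0$,
\[
\Gamma^{\mathrm{pp}}_\beta\bigl((1+s)\alpha\bigr) = \frac{c_s - c}{?}
\]
in a form that is strictly positive. Concretely, for a $p$-dimensional $V$,
\[
\mu_{(1+s)\alpha,\beta}(V)=\mu_{\alpha,\beta}(V)/(1+s),
\]
so the constant becomes $c/(1+s)$. To keep the constant $c$, I would instead compare the cone condition for $\Omega\in(1+s)\alpha$ with the constant $c$, not with $c/(1+s)$. The inequality
\[
\int_V \bigl(c(1+s)^p\alpha^p - p(1+s)^{p-1}\beta\wedge\alpha^{p-1}\bigr) > 0
\]
holds for all proper $V$ and for $V=X$, because $\int_V(c\alpha-p\beta)\wedge\alpha^{p-1}\ge 0$ and the extra factor $(1+s)$ on the first term adds a positive amount $((1+s)-1)c\int_V\alpha^p>0$. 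This is exactly the uniform (strict) numerical criterion. By Song's theorem \cite{So20}, in the form of Chen's subsolution statement \cite{Che21} (the numerical condition with strict inequality including $V=X$), there is a smooth $\Omega_s\in(1+s)\alpha$ with
\[
c\,\Omega_s^{n-1}-(n-1)\chi\wedge\Omega_s^{n-2}>0,
\]
that is, $P_\chi(\Omega_s)<c$. Here one uses that Song's criterion allows a general constant $c'$ in place of the slope, namely $c' \ge \mu$ with strict inequality on subvarieties.

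\emph{Part (2).} I would follow Chen's mass-concentration argument. Take the $\Omega_s$ from part (1) and normalize $S_s:=\Omega_s/(1+s)-\varepsilon_0\chi$. The aim is a uniform $\varepsilon_0$ such that these forms, or suitable modifications of them, stay positive with $P_\chi\le c$ up to errors tending to $0$; weak compactness as $s\to0$ then produces $S$. Chen's actual route is to solve the J-equation (or the cone condition) for $\alpha$ with $\chi$ replaced by $\chi_\varepsilon$ on $X\setminus\{\text{concentration}\}$, using the mass-concentration lemma: a current in $\alpha$ of the form $\alpha+\varepsilon$-multiple of $\beta$-mass concentrated at the diagonal.

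The key steps are as follows. (a) Use Demailly's mass concentration to obtain a Kähler current $\Theta\in\alpha$ with $\Theta\ge\varepsilon_0\chi$, which requires $\alpha-\varepsilon_0\beta$ to be big. This holds for small $\varepsilon_0$ because $\alpha$ is Kähler. (b) Run Chen's induction on dimension with limits of $\Omega_s$ to obtain $S\in\alpha-\varepsilon_0\beta$ that is positive with $P_\chi(S)\le c$ in the local-convolution sense. (c) Compute that adding $\varepsilon_0\chi$ lowers $P_\chi$. On an $(n-1)$-plane $H$ with eigenvalues $\lambda_i$ of $S$ relative to $\chi$,
\[
\sum_i \frac{1}{\lambda_i+\varepsilon_0}\le \frac{c}{1+\varepsilon_0c/(n-1)}.
\]
This follows by a convexity (Jensen/Cauchy–Schwarz) estimate from $\sum_i 1/\lambda_i\le c$: for the harmonic-type sum, the maximal value given $\sum_i 1/\lambda_i\le c$ is attained at equal $\lambda_i=(n-1)/c$. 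This elementary inequality then passes to the local-convolution definition via the monotonicity built into Chen's Definition 3.3.

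\emph{Main obstacle.} The hardest step is (b): extracting a limit current with $P_\chi\le c$ in the local-convolution sense while keeping a uniform $\varepsilon_0$. I would try to quote Chen's proof essentially verbatim, namely his Proposition establishing that the J-nef condition yields such a current.
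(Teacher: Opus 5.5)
Your proposal follows the paper's proof. In (1) you check the strict, uniform subvariety inequalities for $(1+s)\alpha$ with the unchanged constant $c$. The positive $V=X$ term $cs(1+s)^{n-1}\alpha^n$ is exactly $f_s\beta^n$ for the constant $f_s>0$ that makes Chen's solvability theorem for $c\,\Omega^n=n\chi\wedge\Omega^{n-1}+f_s\chi^n$ applicable, and that theorem yields $P_\chi(\Omega_s)<c$. In (2) the current $S$ comes from Chen's mass-concentration theorem (Theorem 1.18 in Che21) applied to the family $\{\Omega_s\}$. Your concavity estimate for $x\mapsto x/(1+\varepsilon_0x)$ is the same Jensen argument the paper uses.

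The one thing to clean up is how you describe that black box. Step (a) is vacuous: for small $\varepsilon_0$ the K\"ahler form $\omega$ itself already dominates $\varepsilon_0\chi$. Also, the forms $\Omega_s/(1+s)-\varepsilon_0\chi$ have no reason to be positive, so taking weak limits of them does not produce $S$. The entire content of (b) is therefore the citation of Chen's mass-concentration theorem with input $\{\Omega_s\}$, exactly as in the paper.
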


\begin{proof}
We first explain the strict approximation. For $s>0$, put
\[
\delta_s
:=
\frac{cs}{2(n-1)(1+s)}.
\]
Let $V\subsetneq X$ be an irreducible $p$-dimensional subvariety.
Since $\Gamma^{\mathrm{pp}}_\beta(\alpha)=0$, one has
\[
\int_V
\left(
c\alpha^p-p\beta\wedge\alpha^{p-1}
\right)
\geq0.
\]
A direct calculation gives
\[
\begin{aligned}
&
\int_V
\left[
\bigl(c-(n-p)\delta_s\bigr)
\bigl((1+s)\alpha\bigr)^p
-
p\beta\wedge
\bigl((1+s)\alpha\bigr)^{p-1}
\right]
\\
&\quad=
(1+s)^{p-1}
\int_V
\left[
c\alpha^p-p\beta\wedge\alpha^{p-1}
+
\bigl(cs-(n-p)\delta_s(1+s)\bigr)\alpha^p
\right].
\end{aligned}
\]
Since $n-p\leq n-1$,
\[
cs-(n-p)\delta_s(1+s)
\geq\frac{cs}{2}>0.
\]
%Thus Chen's uniform numerical inequalities hold for the class
%$(1+s)\alpha$. Applying the solvability theorem of
%\cite{Che21}, with the corresponding positive top-degree term,
%produces $\Omega_s\in(1+s)\alpha$ satisfying
%\[
%P_\chi(\Omega_s)<c.
%\]

Thus all the hypotheses of \cite[Theorem~1.11]{Che21} are satisfied for the class $(1+s)\alpha$. More explicitly, put \[ f_s := \frac{cs(1+s)^{n-1}\alpha^n}{\beta^n}>0. \] The cohomological compatibility condition is \[ \begin{aligned} f_s\beta^n &= c(1+s)^n\alpha^n - n(1+s)^{n-1}\beta\cdot\alpha^{n-1} \\ &= cs(1+s)^{n-1}\alpha^n, \end{aligned} \]
where we used $ n\beta\cdot\alpha^{n-1}=c\alpha^n. $ Consequently, \cite[Theorem~1.11]{Che21} produces a K\"ahler form \[ \Omega_s\in(1+s)\alpha \] satisfying \[ n\chi\wedge\Omega_s^{n-1} + f_s\chi^n = c\Omega_s^n \] and \[ P_\chi(\Omega_s)<c. \] The family $\{\Omega_s\}_{s>0}$ therefore satisfies the hypotheses of the mass-concentration theorem \cite[Theorem~1.18]{Che21}. Hence there exist $\varepsilon_0>0$ and a closed positive current $ S\in\alpha-\varepsilon_0\beta $ such that \( P_\chi(S)\leq c \) in the local-convolution sense.
The mass-concentration theorem
\cite[Theorem~1.18]{Che21} now gives
$\varepsilon_0>0$ and a positive current
\(
S\in\alpha-\varepsilon_0\beta
\)
satisfying $P_\chi(S)\leq c$ in the weak sense.

It remains to verify the strict inequality for
$T_{\mathrm{mc}}=S+\varepsilon_0\chi$. Work on a coordinate chart
and let $\chi_0\leq\chi$ be a constant-coefficient K\"ahler form.
If $S_\rho$ is a local convolution regularization, let
$\lambda_1,\ldots,\lambda_n$ be the eigenvalues of $\chi_0$
relative to $S_\rho$. The weak cone inequality gives
\[
\sum_{j\neq k}\lambda_j\leq c
\qquad
(1\leq k\leq n).
\]
After adding $\varepsilon_0\chi_0$, the corresponding eigenvalues
become
$\frac{\lambda_j}{1+\varepsilon_0\lambda_j}.$
The function
\[
x\longmapsto\frac{x}{1+\varepsilon_0x}
\]
is increasing and concave. Hence Jensen's inequality gives
\[
\begin{aligned}
\sum_{j\neq k}
\frac{\lambda_j}{1+\varepsilon_0\lambda_j}
&\leq
(n-1)
\frac{
\frac{1}{n-1}\sum_{j\neq k}\lambda_j
}{
1+\frac{\varepsilon_0}{n-1}
\sum_{j\neq k}\lambda_j
}
\\
&\leq
\frac{c}
{1+\frac{\varepsilon_0c}{n-1}}
=
c_{\varepsilon_0}.
\end{aligned}
\]
Since convolution preserves $\chi-\chi_0\geq0$, and since
$P_{\chi_0}$ is order-reversing, the same bound holds for the
regularizations of $T_{\mathrm{mc}}$. This proves the lemma.
\end{proof}

For $\delta>0$, define the Siu upper-level set
\[
E_\delta(T_{\mathrm{mc}})
:=
\{x\in X:\nu(T_{\mathrm{mc}},x)\geq\delta\}.
\]
By Siu's theorem, this is a proper analytic subset of $X$.

The next lemma is the key point of the argument. It is a partial
version of Chen's regularization \cite[Section~4]{Che21} construction: we do not attempt to
regularize $T_{\mathrm{mc}}$ globally while preserving a uniform
strict cone margin. We only require such a margin on compact subsets
outside one fixed Siu level.

\begin{lem}
\label{lem:partial-cone-regularization}
There exists $\delta_0>0$ with the following property. Let
$K\Subset X\setminus E_{\delta_0}(T_{\mathrm{mc}}).$
Then there are constants $a_K>0,\eta_K>0,$ and $ s_K>0,$ 
such that, for every $0<s<s_K$, there exists a smooth K\"ahler form
$\widehat\Omega_{s,K}\in(1+s)\alpha$
satisfying
\[
P_\chi(\widehat\Omega_{s,K})<c
\qquad\text{on }X,
\]
and
\[
\widehat\Omega_{s,K}\geq a_K\chi,
\qquad
P_\chi(\widehat\Omega_{s,K})\leq c-\eta_K
\qquad\text{on }K.
\]
\end{lem}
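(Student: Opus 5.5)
The plan is to run a localized version of Chen's regularization \cite[Section~4]{Che21}, starting from the mass-concentration current $T_{\mathrm{mc}}=S+\varepsilon_0\chi$ of Lemma~\ref{lem:boundary-mass-concentration}, and to glue the result to the global strict approximant $\Omega_s$ from part (1) of that lemma.

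\textbf{Step 1: choice of $\delta_0$.} The first step is to fix the Lelong threshold. Local convolution of a current with Lelong number $<\delta$ at $x$ loses at most an amount of positivity comparable to $\delta$ times a fixed K\"ahler form; this is the Demailly--Błocki--Kołodziej type estimate that Chen uses. Since $T_{\mathrm{mc}}\geq\varepsilon_0\chi$, there is a positivity margin of order $\varepsilon_0$. In addition, $c_{\varepsilon_0}<c$ gives a cone margin of $c-c_{\varepsilon_0}$. I would choose $\delta_0$ so small, in terms of $\varepsilon_0$, $c-c_{\varepsilon_0}$ and the geometry of $(X,\omega,\chi)$, that on $X\setminus E_{\delta_0}(T_{\mathrm{mc}})$ the regularizations keep, say, half of both margins.

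\textbf{Step 2: regularization near $K$.} Given $K\Subset X\setminus E_{\delta_0}(T_{\mathrm{mc}})$, I would cover a neighborhood $U$ of $K$ by finitely many coordinate balls. On each ball I would take local convolutions $T_\rho$ of the local potentials of $T_{\mathrm{mc}}$. By Lemma~\ref{lem:boundary-mass-concentration}(2), $P_{\chi_0}(T_\rho)\leq c_{\varepsilon_0}$ with respect to constant-coefficient forms $\chi_0\leq\chi$. Letting $\chi_0$ approximate $\chi$ as the balls shrink changes this only by an error $o(1)$, so $P_\chi(T_\rho)\leq c-\tfrac12(c-c_{\varepsilon_0})$ on $U$. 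Upper semicontinuity of Lelong numbers together with Step 1 gives $T_\rho\geq\tfrac{\varepsilon_0}{2}\chi$ there.

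I would then patch the local pieces by a regularized maximum of potentials, $\widetilde\max(\varphi_{\rho,i}+\text{bump}_i)$, in the spirit of Richberg/Demailly gluing. The cone condition $P_\chi<c'$ is preserved under regularized maxima, because the set of forms satisfying it is convex and stable under adding positive forms. So I obtain a smooth form $\Theta_K\in\alpha$ on $U$ with $\Theta_K\geq a\chi$ and $P_\chi(\Theta_K)\leq c-2\eta$.

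\textbf{Step 3: global gluing.} Next I would write $\Omega_s=(1+s)\omega+dd^c\psi_s$ and compare it with $(1+s)\Theta_K$. I would take
\[
\widehat\Omega_{s,K}=(1+s)\omega+dd^c\,\widetilde\max\bigl(\psi_s,\,(1+s)\theta_K+C_K\rho_K\bigr),
\]
where $\theta_K$ is the local potential of $\Theta_K$ and $\rho_K$ is a cutoff: it is $0$ near $K$ and very negative near $\partial U$. This ensures the second branch dominates near $K$ and the first branch dominates near $\partial U$. Two points need checking. First, the scaled form $(1+s)\Theta_K$ still has $P_\chi<c$ with margin about $2\eta/(1+s)$, because $P_\chi$ scales like $1/(1+s)$. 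Second, the cutoff term must not destroy the cone condition. To handle the second point, I would use a strictly psh exhaustion-type correction whose Hessian error is absorbed into the margin. Then, for $s<s_K$, the max is globally a smooth form satisfying $P_\chi<c$, and near $K$ it equals $(1+s)\Theta_K$ up to the normalizing constant. This gives $a_K=a$ and $\eta_K=\eta$.

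\textbf{Main obstacle.} I expect the hardest part to be the quantitative Lelong-number estimate in Step 1. The difficulty is to show that convolution of $T_{\mathrm{mc}}$ away from $E_{\delta_0}$ preserves the \emph{nonlinear} cone inequality with a uniform margin, not merely positivity. Chen's Section~4 does this globally with an induction on Siu sets. Here I would instead reuse his local estimate, in which the loss in $P_\chi$ is bounded by $C\nu(T,x)$ plus $o(1)$, and apply it only on $K$.

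A secondary difficulty is in Step 3. The required domination pattern $\psi_s<(1+s)\theta_K$ near $K$ needs the potentials to be normalized compatibly. If that fails, I would fall back on replacing $\Omega_s$ by $(1-\lambda)\Omega_s+\lambda(1+s)T$-type convex combinations. These preserve the cone condition by convexity.
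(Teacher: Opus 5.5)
There is a genuine gap in Step 3, and it is exactly the difficulty the lemma is designed to overcome.

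Your gluing $\widetilde{\max}\bigl(\psi_s,(1+s)\theta_K+C_K\rho_K\bigr)$ needs the second branch to win on $K$ and lose near $\partial U$. So the drop $M:=C_K\sup|\rho_K|$ must exceed the oscillation of $(1+s)\theta_K-\psi_s$ between $K$ and the collar of $U$. Adding pluriharmonic functions to $\theta_K$ does not reduce this, because $dd^c\bigl((1+s)\theta_K-\psi_s\bigr)=(1+s)\Theta_K-\Omega_s$. And $\Omega_s$ has no estimates uniform in $s$; if it had, the lemma would be superfluous.

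On the other hand, a function that vanishes near $K$ and is $\leq -M$ near $\partial U$ has an interior maximum, so it is never psh. The maximum principle applied to $C_K\rho_K+\epsilon\phi$, with $dd^c\phi=\omega$ locally, shows that $dd^c(C_K\rho_K)\geq-\epsilon\omega$ forces $M\leq\epsilon\operatorname{osc}\phi$. Thus the negative part of the cutoff's Hessian is roughly $M$ divided by the square of the transition width. It cannot be absorbed into a fixed margin $\eta$, and the ``strictly psh exhaustion-type correction'' you appeal to does not exist for this purpose.

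The fallback fails as well. With constant $\lambda$, $(1-\lambda)\Omega_s+\lambda(1+s)T_{\mathrm{mc}}$ is not smooth. With variable $\lambda$, the uncontrolled terms $d\lambda\wedge d^c(\cdot)$ and $(dd^c\lambda)(\cdot)$ reappear.

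The missing idea, which the paper takes from \cite[Proposition~4.1]{Che21}, is to reverse the roles and let Lelong numbers, rather than a cutoff, produce the domination pattern. The paper glues the local convolution potentials $v_{s,i,\rho}$ of $\lambda_0(T_{\mathrm{mc}}+s\omega)$ over a cover of \emph{all} of $X$. It uses $\Omega_s$ only through the comparison potential $u_s+3\varepsilon_*\log\rho$ on a neighborhood $O$ of $Z=E_{\delta_0}(T_{\mathrm{mc}})$.

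\begin{itemize}
\item The shift $3\varepsilon_*\log\rho$ is a constant, so it costs nothing in the Hessian.
\item For fixed $s$ and $\rho\to0$, the local branches beat it wherever the approximate Lelong numbers are at most $2\varepsilon_*$, in particular on $K$ and near $\partial O$.
\item Wherever a local branch is active, its approximate Lelong numbers are below $4\varepsilon_*$. This is precisely the regime in which the discrepancy between charts is dominated by the quadratic term $-100\sigma(1+s)\psi_i$ created by the shrink factor $\lambda_0=1-100\sigma$.
\end{itemize}

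This is what fixes $\delta_0=\varepsilon_*/\lambda_0$. With your assignment of roles, a constant shift cannot work either: the $\Omega_s$-branch would have to win near $\partial U$, where the regularization of $T_{\mathrm{mc}}$ has small Lelong numbers and so is not very negative.

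Relatedly, Step 1 places the threshold in the wrong spot. Local convolution in flat coordinates loses neither positivity nor cone margin relative to a constant-coefficient $\chi_i^0$. So $T_\rho\geq\varepsilon_0\chi_i^0$ and $P_{\chi_i^0}(T_\rho)\leq c_{\varepsilon_0}$ hold regardless of Lelong numbers, and the only losses are the fixed factors $1+\sigma$ and $1/\lambda_0$.

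Lelong numbers enter only through the gluing. This happens in two places. The first is between charts, in your Step 2, where Richberg-type patching fails for unbounded potentials unless you have approximate-Lelong-number control and the shrink factor to absorb the bump Hessians. The second is against the comparison branch.
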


\begin{proof} 
Set \[ c_0 := \frac{c} {1+\frac{\varepsilon_0c}{n-1}} <c, \] so that Lemma~\ref{lem:boundary-mass-concentration} gives 
\[ T_{\mathrm{mc}}\geq\varepsilon_0\chi, \qquad P_\chi(T_{\mathrm{mc}})\leq c_0 \] 
in the local-convolution sense. We first introduce the shrinking step which is used in \cite[Section~4]{Che21}. 
Choose a number $\sigma>0$ sufficiently small that \[ \lambda_0:=1-100\sigma>0 \] and \begin{equation}\label{ineq: c}
     \frac{(1+\sigma)c_0}{\lambda_0}<c.
\end{equation}
Fix also $s_0\in(0,1)$. Choose a finite collection of coordinate balls 
\[ B_r(x_i)\Subset B_{2r}(x_i), \qquad 1\leq i\leq N, \] such that the balls $B_r(x_i)$ cover $X$. After decreasing $r$ and $\sigma$, choose on $B_{2r}(x_i)$: 
\begin{enumerate} 
    \item a local K\"ahler potential $\psi_i$ for $\omega$, i.e.,$\sqrt{-1}\partial\bar\partial\psi_i=\omega; $
    \item a constant-coefficient K\"ahler form $\chi_i^0$ satisfying $ \chi_i^0\leq\chi\leq(1+\sigma)\chi_i^0; $
    \item the normalizations and coordinate estimates used in the setup preceding \cite[Proposition~4.1]{Che21}, with Chen's constant $\epsilon_{4.1}$ replaced by $\sigma$. 
\end{enumerate}

Write 
\[ T_{\mathrm{mc}} = \omega+\sqrt{-1}\partial\bar\partial\varphi \]
for a global quasi-plurisubharmonic function $\varphi$. For $0<s<s_0$, define the shrunk auxiliary current 
\[\widetilde R_s := \lambda_0\bigl(T_{\mathrm{mc}}+s\omega\bigr). \] Then 
$ [\widetilde R_s] = \lambda_0(1+s)\alpha $ and 
\begin{equation}\label{eq:lelong numbers}
    \nu(\widetilde R_s,x) = \lambda_0\nu(T_{\mathrm{mc}},x) \qquad (x\in X).
\end{equation} 
In particular, its Lelong numbers are independent of $s$ up to the fixed factor $\lambda_0$. A local plurisubharmonic potential for $\widetilde R_s$ on $B_{2r}(x_i)$ is 
\[ \Phi_{s,i} := \lambda_0\bigl(\varphi+(1+s)\psi_i\bigr).\]
Let $\Phi_{s,i,\rho}$ denote the corrected convolution regularization used in \cite[Section~4]{Che21}, at scale $0<\rho<r/20$, and define the corresponding local plurisubharmonic potential relative to the background $(1+s)\omega$ by
\begin{equation}\label{fun:local branch rel}
    v_{s,i,\rho} := \Phi_{s,i,\rho}-(1+s)\psi_i.
\end{equation} 
Thus 
\[ (1+s)\omega + \sqrt{-1}\partial\bar\partial v_{s,i,\rho} = \sqrt{-1}\partial\bar\partial\Phi_{s,i,\rho}. \] We next prove the uniform positivity and cone estimates for these potentials. Since adding the positive form $s\omega$ can only improve the cone inequality, the local-convolution definition gives 
\[ P_{\chi_i^0} \bigl((T_{\mathrm{mc}}+s\omega)_\rho\bigr) \leq c_0. \] By homogeneity of $P_{\chi_i^0}$, \[ P_{\chi_i^0} \bigl(\sqrt{-1}\partial\bar\partial\Phi_{s,i,\rho}\bigr) \leq \frac{c_0}{\lambda_0}. \] Since $\chi\leq(1+\sigma)\chi_i^0$, it follows from \eqref{ineq: c} that \[ P_\chi \bigl(\sqrt{-1}\partial\bar\partial\Phi_{s,i,\rho}\bigr) \leq \frac{(1+\sigma)c_0}{\lambda_0} =:c_1<c.\] Likewise, $ T_{\mathrm{mc}}+s\omega \geq \varepsilon_0\chi \geq \varepsilon_0\chi_i^0, $ so, after convolution, \[ \sqrt{-1}\partial\bar\partial\Phi_{s,i,\rho} \geq \lambda_0\varepsilon_0\chi_i^0 \geq \frac{\lambda_0\varepsilon_0}{1+\sigma}\chi. \] 

Thus there are fixed constants 
$ a_0 := \frac{\lambda_0\varepsilon_0}{1+\sigma}>0, $ and $ \eta_0:=c-c_1>0, $
such that every local potential satisfies 
\begin{equation}\label{ineq:local bbranch control}
    \sqrt{-1}\partial\bar\partial\Phi_{s,i,\rho} \geq a_0\chi, \qquad P_\chi \bigl(\sqrt{-1}\partial\bar\partial\Phi_{s,i,\rho}\bigr) \leq c-\eta_0.
\end{equation}  
The constants in \eqref{ineq:local bbranch control} are independent of $s$ and $\rho$. The role of the factor $\lambda_0=1-100\sigma$ is visible directly in \eqref{fun:local branch rel}. The local potential contains the background contribution \[ -100\sigma(1+s)\psi_i, \] up to the harmless convolution error. In the normalized coordinates, this is the negative quadratic term used in \cite[Proposition~4.1(3)]{Che21} to force a potential defined on an interior coordinate ball to dominate potentials approaching the outer boundary of another chart. For completeness, we now match the remaining notation with \cite[Proposition~4.1]{Che21}.

Let 
\[ \widehat\Phi_{s,i,\rho}(x) := \sup_{B_\rho^i(x)}\Phi_{s,i}, \] and define the approximate Lelong number 
\[ \nu_{s,i}(x,\rho) := \frac{ \widehat\Phi_{s,i,r/16}(x) - \widehat\Phi_{s,i,\rho}(x) }{ \log(r/16)-\log\rho }.\] 
Let $\varepsilon_*>0$ be the number obtained from $\sigma$, $r$, and the fixed convolution kernel by the formula defining $\epsilon_{4.5}$ in the proof of \cite[Proposition~4.1]{Che21}. Set \[ \delta_0:=\frac{\varepsilon_*}{\lambda_0} \] and 
\[ Z:=E_{\delta_0}(T_{\mathrm{mc}}). \] By \eqref{eq:lelong numbers}, \[ Z = \{x\in X:\nu(\widetilde R_s,x)\geq\varepsilon_*\}, \]
and this set is independent of $s$.
Let now \[ K\Subset X\setminus Z. \] 
Choose open neighborhoods $ Z\subset O'\Subset O $ such that 
\begin{equation}\label{eq:set equal}
    \overline O\cap K=\varnothing. 
\end{equation} 
Because $\overline{X\setminus O'}$ is compact and disjoint from $Z$, upper semicontinuity of Lelong numbers gives
\begin{equation}\label{ineq:lelong cont}
    \sup_{x\in X\setminus O'} \lambda_0\nu(T_{\mathrm{mc}},x) < \varepsilon_*.
\end{equation}  
Let $\Omega_s = (1+s)\omega + \sqrt{-1}\partial\bar\partial u_s $ be the smooth strict-cone form given by the first part of Lemma~\ref{lem:boundary-mass-concentration}. We use on $O$ the shifted smooth comparison potential
\begin{equation}\label{fun:comparison branch}
    u_s+3\varepsilon_*\log\rho.
\end{equation}  Notice that $u_s$ is smooth on all of $X$. This is the simplification which removes the need to assume that $Z$ is smooth or to resolve its singularities. The proof of \cite[Proposition~4.1]{Che21} now applies to the potentials $\Phi_{s,i}$ and the comparison potential \eqref{fun:comparison branch}, with Chen's parameters \[ \epsilon_{4.1}=\sigma, \qquad \epsilon_{4.5}=\varepsilon_*. \] The added term $\lambda_0s\psi_i$ varies in a uniformly bounded family of smooth functions for $0<s<s_0$, so all local constants in that proof can be chosen independently of $s$. The bounds of the smooth function $u_s$ may depend on $s$; this only affects how small the regularization scale $\rho$ must be for that particular $s$. More precisely, after choosing $\rho=\rho(s,K)>0$ sufficiently small, the three conclusions of \cite[Proposition~4.1]{Che21} give: 
\begin{enumerate}
    \item on a neighborhood of $\partial O$, the local convolution potentials dominate the shifted comparison potential \eqref{fun:comparison branch}; 
    \item if a local convolution potential can be compared with the comparison potential on $O$, then all relevant approximate Lelong numbers are strictly smaller than $4\varepsilon_*$; 
    \item whenever the relevant approximate Lelong numbers are at most $4\varepsilon_*$, a potential approaching the outer boundary of one coordinate chart is dominated, with a fixed positive gap, by a potential defined on an interior chart.
\end{enumerate}
    The first assertion follows from \eqref{ineq:lelong cont} and \cite[Proposition~4.1(1)]{Che21}; the second and third are precisely the roles of parts (2) and (3) of that proposition. These three separation properties ensure that the finite regularized maximum of \[ u_s+3\varepsilon_*\log\rho \quad\text{on }O \] and \[ v_{s,i,\rho} \quad\text{on the coordinate balls} \] defines one globally smooth function, which we denote by $\widehat u_{s,K}$. Set 
    \begin{equation}\label{eqn:omega widehat}
        \widehat\Omega_{s,K} := (1+s)\omega + \sqrt{-1}\partial\bar\partial\widehat u_{s,K}.
    \end{equation} 
    Since $\widehat u_{s,K}$ is globally defined, $ [\widehat\Omega_{s,K}]=(1+s)\alpha. $ 
    At a transition point, the complex Hessian of a regularized maximum has the form 
    \begin{equation}\label{eqns:reg max}
        \sum_j\lambda_jA_j+Q, \qquad \lambda_j\geq0, \qquad \sum_j\lambda_j=1, \qquad Q\geq0,
    \end{equation}  
    where the $A_j$ are the forms associated with the active potentials. Every local potential satisfies \eqref{ineq:local bbranch control}, while the comparison potential is the K\"ahler form $\Omega_s$ and satisfies \[ P_\chi(\Omega_s)<c. \] 
Convexity and order reversal of $P_\chi$ therefore give 
\[ P_\chi(\widehat\Omega_{s,K})<c \] on all of $X$. In particular, $\widehat\Omega_{s,K}$ is K\"ahler. Finally, by \eqref{eq:set equal} the compact set $K$ lies outside the region where the comparison branch is needed. By \eqref{ineq:lelong cont}, after decreasing $\rho(s,K)$, the approximate Lelong numbers on $K$ are at most $2\varepsilon_*$. 
Part (1) of \cite[Proposition~4.1]{Che21} then shows that the comparison potential is inactive on $K$. Thus only the local potentials satisfying \eqref{ineq:local bbranch control} participate in the regularized maximum on $K$. Equations \eqref{ineq:local bbranch control} and \eqref{eqns:reg max} consequently imply \[ \widehat\Omega_{s,K}\geq a_0\chi, \qquad P_\chi(\widehat\Omega_{s,K})\leq c-\eta_0 \] on $K$. 
Taking $a_K:=a_0, \eta_K:=\eta_0,$ and $ s_K:=s_0 $ proves the lemma. 
\end{proof}

\subsection{Analytic containment}

In this section, we prove that the Siu's upper-level set contains the optimal destabilizing subvarieties in the case $ \Gamma^{pp}_\be(\al)\leq 0 $.

\begin{prop}
\label{prop:optimal-contained-siu}
Assume that $ \Gamma^{pp}_\be(\al)\leq 0 $.
Then there exist a K\"ahler current $ T_{\mathrm{mc}}\in\alpha $ and a number $\delta_0>0$ such that
\[
\mathcal D^{\mathrm{opt}}_{\alpha,\beta}
\subset
E_{\delta_0}(T_{\mathrm{mc}}).
\]
In particular, the optimal destabilizing locus is contained in one
fixed proper analytic subset of $X$.
\end{prop}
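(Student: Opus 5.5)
First reduce to the boundary case. By Lemma~\ref{lem:optimal-independent-path} and the discussion in Section~\ref{subsec:linear}, the pair $(\alpha,\beta_\tau)$ satisfies $\Gamma^{\mathrm{pp}}_{\beta_\tau}(\alpha)=0$ and has the same optimal subvarieties as $(\alpha,\beta)$. For $\tau>0$, $\beta_\tau$ is a positive combination of K\"ahler classes and hence K\"ahler. If $\tau=0$, then $\Gamma^{pp}_{\beta_0}(\alpha)=c/n>0$, which is excluded; so $\tau>0$. So we may replace $\beta$ by $\beta_\tau$ and $\chi$ by $\chi_\tau$, and assume $\Gamma^{\mathrm{pp}}_\beta(\alpha)=0$. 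Optimal subvarieties are then exactly those $V$ with $\int_V(c\alpha^p-p\beta\wedge\alpha^{p-1})=0$. Let $T_{\mathrm{mc}}$ be the current of Lemma~\ref{lem:boundary-mass-concentration}; it is K\"ahler since $T_{\mathrm{mc}}\ge\varepsilon_0\chi$. Let $\delta_0$ be the number from Lemma~\ref{lem:partial-cone-regularization}.

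Argue by contradiction. Suppose $V$ is optimal of dimension $p$ and $V\not\subset E_{\delta_0}(T_{\mathrm{mc}})$. Since $E_{\delta_0}$ is analytic and $V$ is irreducible, $V\setminus E_{\delta_0}$ is a nonempty Zariski open subset of $V$. Choose a compact set $K\subset X\setminus E_{\delta_0}$ with $\int_{V\cap K}\omega^p>0$, for instance a small closed ball around a regular point of $V$ outside $E_{\delta_0}$. Lemma~\ref{lem:partial-cone-regularization} then gives forms $\widehat\Omega_s:=\widehat\Omega_{s,K}\in(1+s)\alpha$ for $0<s<s_K$ with the following properties: $P_\chi(\widehat\Omega_s)<c$ everywhere, and on $K$ we have $\widehat\Omega_s\ge a_K\chi$ and $P_\chi(\widehat\Omega_s)\le c-\eta_K$.

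The key pointwise step is a restriction inequality. If a K\"ahler form $A$ satisfies $P_\chi(A)\le c'$ at $x$, then for every $p$-dimensional subspace $W\subset T_xX$ we have $\operatorname{tr}_{A|_W}\chi|_W\le \frac{p}{n-1}c'$. Indeed, the trace over $W$ is at most the sum of the $p$ largest eigenvalues of $\chi$ relative to $A$, and that sum is at most $\frac{p}{n-1}$ times the sum of the $n-1$ largest, by averaging. This bound loses a factor, so I would instead use Chen's sharper subvariety form. Following \cite{Che21}, the cone condition $P_\chi(A)<c$ implies $(cA-p\chi)\wedge A^{p-1}|_W>0$ on $p$-planes. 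This is the statement that $cA^{n-1}-(n-1)\chi\wedge A^{n-2}>0$ restricts positively to all lower-dimensional cones; Chen shows it via eigenvalue majorization, $\sum_{j\in I}\lambda_j<c$ for $|I|=p\le n-1$. The same argument under $P_\chi\le c-\eta_K$ and $A\ge a_K\chi$ gives a quantitative version: $(cA-p\chi)\wedge A^{p-1}|_W\ge \eta' A^p|_W$, with $\eta'>0$ depending only on $\eta_K,a_K,n$. Integrating over $V_{\mathrm{reg}}$ and splitting into $V\cap K$ and $V\setminus K$ yields
\[
\int_V\bigl(c(1+s)\alpha-p\beta\bigr)\wedge\bigl((1+s)\alpha\bigr)^{p-1}\ \ge\ \eta'\int_{V\cap K}\widehat\Omega_s^p\ \ge\ \eta' a_K^p\int_{V\cap K}\chi^p=:\kappa>0,
\]
where $\kappa$ is independent of $s$.

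Now let $s\to0$. The left side is a polynomial in $s$ that tends to $\int_V(c\alpha^p-p\beta\wedge\alpha^{p-1})=0$ by optimality, while the right side stays at least $\kappa>0$. This is a contradiction, so $V\subset E_{\delta_0}(T_{\mathrm{mc}})$. By Siu's theorem $E_{\delta_0}$ is analytic, and it is proper since $T_{\mathrm{mc}}$ has Lelong numbers zero almost everywhere. The main obstacle is making the quantitative restricted cone inequality rigorous: the strict margin $\eta_K$ on $n-1$ eigenvalue sums must be converted into a uniform positive lower bound for $(cA-p\chi)\wedge A^{p-1}$ on arbitrary $p$-planes. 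The lower bound $A\ge a_K\chi$ is exactly what keeps this margin from degenerating, since it bounds the eigenvalues $\lambda_j\le 1/a_K$.
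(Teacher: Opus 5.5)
Your proposal is correct and follows the paper's proof step for step. You reduce to $\Gamma^{\mathrm{pp}}=0$ via $\beta_\tau$, use the forms $\widehat\Omega_{s,K}$ from Lemma~\ref{lem:partial-cone-regularization}, prove a pointwise bound $c\widehat\Omega_s^p-p\chi\wedge\widehat\Omega_s^{p-1}\geq \eta a_K^p\chi^p$ on $V\cap K$, and contradict $\int_V(\cdots)=cs(1+s)^{p-1}\int_V\alpha^p\to 0$.

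The step you flag as the main obstacle is actually immediate, and your averaging inequality runs the wrong way (though you do not use it). Since $\chi>0$, the trace of $\chi$ on a $p$-plane $W$ is at most its trace on any $(n-1)$-plane containing $W$. Hence $\operatorname{tr}_{A|_W}\chi|_W\leq P_\chi(A)\leq c-\eta_K$, so you can take $\eta'=\eta_K$; the bound $A\geq a_K\chi$ is needed only to make the lower bound independent of $s$.
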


\begin{proof}
By Lemma~\ref{lem:optimal-independent-path}, we have $\mathrm{Des}^{\mathrm{opt}}_{\alpha,\beta_\tau}(X)
=\mathrm{Des}^{\mathrm{opt}}_{\alpha,\beta}(X)$, so without loss of generality we can assume that $ \Gamma^{pp}_\be(\al)= 0 $.

Let $T_{\mathrm{mc}}$ and $\delta_0$ be given by
Lemmas~\ref{lem:boundary-mass-concentration} and
\ref{lem:partial-cone-regularization}. Put
\[
Z:=E_{\delta_0}(T_{\mathrm{mc}}).
\]

Suppose that an irreducible $p$-dimensional optimal destabilizing
subvariety $V$ is not contained in $Z$. Choose
$x\in V_{\mathrm{reg}}\setminus Z $ 
and a relatively compact open set
$ U\Subset X\setminus Z  $ such that
\[
U\cap V_{\mathrm{reg}}\neq\varnothing.
\]
Choose also a compact set
$ K\Subset X\setminus Z $ whose interior contains $\overline U$.

For sufficiently small $s>0$, let
\[
\widehat\Omega_s
:=
\widehat\Omega_{s,K}\in(1+s)\alpha
\]
be the form supplied by
Lemma~\ref{lem:partial-cone-regularization}. On $K$ one has
\begin{equation}\label{eq:omega low bound}
    \widehat\Omega_s\geq a_K\chi,
\qquad
P_\chi(\widehat\Omega_s)\leq c-\eta_K.
\end{equation}

At a smooth point of $V$, let $W=T_x^{1,0}V$. The elementary trace
identity gives
\[
p\chi\wedge\widehat\Omega_s^{p-1}\big|_W
=
\operatorname{tr}_{\widehat\Omega_s|_W}(\chi|_W)
\widehat\Omega_s^p\big|_W.
\]
Since the trace on a $p$-plane is bounded above by
$P_\chi(\widehat\Omega_s)$, equation \eqref{eq:omega low bound} yields
\[
\begin{aligned}
c\widehat\Omega_s^p
-
p\chi\wedge\widehat\Omega_s^{p-1}
&\geq
\eta_K\widehat\Omega_s^p
\\
&\geq
\eta_Ka_K^p\chi^p
\end{aligned}
\]
on $U\cap V_{\mathrm{reg}}$.
\iffalse
It follows that
\[
\int_{U\cap V_{\mathrm{reg}}}
\left(
c\widehat\Omega_s^p
-
p\chi\wedge\widehat\Omega_s^{p-1}
\right)
\geq
\delta_V,
\tag{5.3}
\]
where
\[
\delta_V
:=
\eta_Ka_K^p
\int_{U\cap V_{\mathrm{reg}}}\chi^p
>0
\]
is independent of $s$.
\fi
Globally,
$P_\chi(\widehat\Omega_s)<c,$
so the form
\[
c\widehat\Omega_s^p
-
p\chi\wedge\widehat\Omega_s^{p-1}
\]
is nonnegative on all of $V_{\mathrm{reg}}$. Therefore
\begin{equation}
    \label{ineq:local mass} \int_V
\left(
c\widehat\Omega_s^p - p\chi\wedge\widehat\Omega_s^{p-1} \right)
\geq \int_{U\cap V_{\mathrm{reg}}}
\left(
c\widehat\Omega_s^p
-
p\chi\wedge\widehat\Omega_s^{p-1}
\right) \geq 
\delta_V,
\end{equation}
where
\[
\delta_V
:=
\eta_Ka_K^p
\int_{U\cap V_{\mathrm{reg}}}\chi^p
>0
\]
is independent of $s$.
On the other hand, since $
[\widehat\Omega_s]=(1+s)\alpha,$ 
one has
\[
\begin{aligned}
\int_V
\left(
c\widehat\Omega_s^p
-
p\chi\wedge\widehat\Omega_s^{p-1}
\right)
&=
(1+s)^{p-1}
\int_V
\left(
c(1+s)\alpha^p
-
p\beta\wedge\alpha^{p-1}
\right).
\end{aligned}
\]
Optimality at the boundary gives
\[
\int_V
\left(
c\alpha^p-p\beta\wedge\alpha^{p-1}
\right)=0.
\]
Consequently,
\[
\int_V
\left(
c\widehat\Omega_s^p
-
p\chi\wedge\widehat\Omega_s^{p-1}
\right)
=
cs(1+s)^{p-1}\int_V\alpha^p
\longrightarrow0
\]
as $s\to0$. This contradicts the inequality \eqref{ineq:local mass}. Hence
\[
V\subset Z.
\]
Since $V$ was arbitrary, the proposition follows.
\end{proof}

\begin{cor}
\label{cor:finite-optimal-divisors}
Assume that $ \Gamma^{pp}_\be(\al)\leq 0 $. Then there are only finitely many optimal destabilizing prime divisors.
\end{cor}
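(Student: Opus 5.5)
The plan is to deduce the corollary directly from Proposition~\ref{prop:optimal-contained-siu}. Since $\Gamma^{pp}_\be(\al)\leq 0$, that proposition supplies a K\"ahler current $T_{\mathrm{mc}}\in\alpha$ and a number $\delta_0>0$ such that every optimal destabilizing subvariety lies in $Z:=E_{\delta_0}(T_{\mathrm{mc}})$. By Siu's theorem, $Z$ is a closed analytic subset of $X$. It is also proper: $T_{\mathrm{mc}}$ is a closed positive current, so its Lelong numbers vanish outside a countable union of proper analytic sets, and in particular $Z\neq X$. Hence $Z$ has finitely many irreducible components, each of dimension at most $n-1$.

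Next I will take an optimal destabilizing prime divisor $D$, which is an irreducible analytic subset of dimension $n-1$. Since $D\subset Z=\bigcup_i Z_i$ is a finite union of irreducible analytic sets and $D$ is irreducible, $D$ lies in some single component $Z_i$. Then $n-1=\dim D\leq\dim Z_i\leq n-1$. Because both sets are irreducible of the same dimension, this forces $D=Z_i$. Therefore every optimal prime divisor is one of the $(n-1)$-dimensional irreducible components of $Z$, and there are only finitely many of these.

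There is no real obstacle once Proposition~\ref{prop:optimal-contained-siu} is available; the substantive analytic input is the partial regularization in Lemma~\ref{lem:partial-cone-regularization}. The only points to check are that the reduction to $\Gamma^{pp}_\be(\al)=0$ (via Lemma~\ref{lem:optimal-independent-path}) preserves the set of optimal subvarieties, which is already built into the proposition, and that $Z$ is proper, which holds because $\nu(T_{\mathrm{mc}},x)=0$ for generic $x$.
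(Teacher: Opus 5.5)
Your proposal is correct and follows the paper's proof. Both arguments place every optimal prime divisor inside the fixed proper analytic set $E_{\delta_0}(T_{\mathrm{mc}})$ given by Proposition~\ref{prop:optimal-contained-siu}, identify each such divisor with one of its codimension-one irreducible components, and conclude because a compact analytic set has finitely many components.
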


\begin{proof}
 By Proposition~\ref{prop:optimal-contained-siu}, every optimal
destabilizing prime divisor is contained in the fixed proper analytic
set
\(
E_{\delta_0}(T_{\mathrm{mc}}).
\)
An irreducible divisor contained in this set is necessarily one of
its codimension-one irreducible components. A compact analytic set has
only finitely many irreducible components, proving the assertion.
\end{proof}

%Combining the two endpoint results with the argument of
%\cite{KSD2} gives the desired conditional finiteness theorem.

\noindent We prove the finiteness of optimal destabilizing varieties by assuming a slightly weaker condition than \cite[Theorem 3.9]{KSD2}. As we already established the finiteness of curves and divisors, we can use these results to drop the assumptions on the modified K\"ahler condition for the classes $ c\al-\be $ and $ c\al-(n-1)\be $.

\begin{cor}
\label{cor:conditional-full-finiteness}
Assume $ \Gamma^{pp}_\be(\al)\leq 0 $ and 
\[
c\alpha-p\beta\in\mathcal M_{p+1}\mathcal K
\]
for every $2\leq p\leq n-2$.
Then $ \mathrm{Des}^{\mathrm{opt}}_{\alpha,\beta}(X) $ is finite.
\end{cor}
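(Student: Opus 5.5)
We reduce to the boundary case and then handle each dimension separately. By Lemma~\ref{lem:optimal-independent-path}, replacing $\beta$ by $\beta_\tau$ leaves $\mathrm{Des}^{\mathrm{opt}}_{\alpha,\beta}(X)$ unchanged and gives $\Gamma^{\mathrm{pp}}_{\beta_\tau}(\alpha)=0$. So we may assume the pair is J-nef and that the optimal subvarieties are exactly those $V$ with $\mu_{\alpha,\beta}(V)=c$. The constant $c$ is unchanged along the path. However, the hypothesis $c\alpha-p\beta\in\mathcal M_{p+1}\mathcal K$ is stated for the original $\beta$, so we first transfer it to $\beta_\tau$. We write
\[
c\alpha-p\beta_\tau=(1-\tau)\Bigl(c-\tfrac{pc}{n}\Bigr)\alpha+\tau\,(c\alpha-p\beta).
\]
This is a Kähler class plus a positive multiple of a $(p+1)$-modified Kähler class. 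Since $\mathcal M_{p+1}\mathcal K$ is an open convex cone containing $\mathcal K$, the class $c\alpha-p\beta_\tau$ lies in $\mathcal M_{p+1}\mathcal K$ as well. (If $\tau=0$, this instead follows directly.)

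Next we stratify $\mathrm{Des}^{\mathrm{opt}}$ by dimension $p\in\{1,\dots,n-1\}$. For $p=1$, Corollary~\ref{cor:finite curves} applies (for $n>2$; on surfaces curves are divisors). It shows that the optimal curves are exactly the curves in $E_{nK}(c\alpha-\beta)$, and there are finitely many of them. For $p=n-1$, Corollary~\ref{cor:finite-optimal-divisors} gives finiteness of optimal prime divisors. This leaves the intermediate range $2\le p\le n-2$.

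For intermediate $p$ we follow the argument of \cite[Theorem~3.9]{KSD2}. Take an optimal $V$ of dimension $p$. Optimality gives $\int_V(c\alpha-p\beta)\wedge\alpha^{p-1}=0$. The key step is to show that this forces $V\subset E_{nK}(c\alpha-p\beta)$. Suppose instead that $V\not\subset E_{nK}(c\alpha-p\beta)$. Then there is a Kähler current $T\in c\alpha-p\beta$ with analytic singularities that is smooth near a generic point of $V$. Restricting $T$ to $V$ and pairing with $\alpha^{p-1}$ would give a strictly positive value, a contradiction. This is the standard fact that the restriction of a big class to a subvariety not contained in its non-Kähler locus is big, so the pairing with a Kähler class is positive. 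By \cite[Lemma~2.3]{KSD2} and the hypothesis $c\alpha-p\beta\in\mathcal M_{p+1}\mathcal K$, every component of $E_{nK}(c\alpha-p\beta)$ has dimension at most $p$. Hence $V$ is one of its finitely many $p$-dimensional irreducible components. Taking the union over $p$ gives finiteness.

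The main obstacle is the middle step. We need a clean justification that optimal $V$ lie in $E_{nK}(c\alpha-p\beta)$ when $c\alpha-p\beta$ is only big and modified Kähler, not nef. We also need the transfer of the hypothesis along the path to $\beta_\tau$. For the first point, we would rely on the argument already carried out in \cite{KSD2}, which is the proof of their Theorem~3.9 restricted to each single $p$. This is exactly why the hypotheses at $p=1$ and $p=n-1$ can be dropped and replaced by Corollaries~\ref{cor:finite curves} and \ref{cor:finite-optimal-divisors}.
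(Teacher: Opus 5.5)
Your proposal is correct and follows the paper's proof. Curves are handled by Corollary~\ref{cor:finite curves}, divisors by Corollary~\ref{cor:finite-optimal-divisors}, and for $2\le p\le n-2$ you use the containment $V\subset E_{nK}(c\alpha-p\beta)$ (the argument of \cite[Lemma 2.4]{KSD2}) together with the dimension bound from \cite[Lemma 2.3]{KSD2}. Your explicit transfer of the hypothesis to $\beta_\tau$ is valid but can be bypassed in the intermediate range: for the original $\beta$, an optimal $V$ already satisfies $\int_V(c\alpha-p\beta)\wedge\alpha^{p-1}=(n-p)\Gamma^{\mathrm{pp}}_\beta(\alpha)\int_V\alpha^p\leq 0$, which is all the containment argument needs.
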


\begin{proof}
The proof is the same as in \cite{KSD2}, using that in addition the optimal destabilizing curves are finite by the nef-and-big
argument of Corollary \ref{cor:finite curves}, and the optimal
destabilizing divisors are finite by
Corollary \ref{cor:finite-optimal-divisors}.
Indeed, given this, let $2\leq p\leq n-2$, and let $V$ be a $p$-dimensional optimal
destabilizing subvariety. Then
\(
V\subset
E_{\mathrm{nK}}(c\alpha-p\beta)
\)
by the argument of \cite[Lemma 2.4]{KSD2}. Since
\(
c\alpha-p\beta\in\mathcal M_{p+1}\mathcal K,
\)
its non-K\"ahler locus has dimension at most $p$. Hence every
$p$-dimensional optimal destabilizing subvariety is an irreducible
component of this fixed analytic set, $E_{nK}(c\al-p\be)$. There are only finitely many
such components. This proves the assertion.
\end{proof}

\begin{rem} \label{rem:comparison-liu-analytic-envelope} The final mass comparison in Proposition~\ref{prop:optimal-contained-siu} is closely related to the forced-locus argument in \cite{Liu1,Liu2}. The analytic inputs are nevertheless different. Liu begins with a weak cone current of the form \[ T=S+\varepsilon\chi, \] where $S$ already has analytic singularities, and proves that every numerically null subvariety is contained in the positive-Lelong locus $E_+(T)$. In higher dimensions, the availability of such a current is the additional J-bigness hypothesis. Here we begin instead with the mass-concentration current $T_{\mathrm{mc}}$ supplied by \cite[Theorem~1.18]{Che21}; its singularities need not be analytic. Lemma \ref{lem:partial-cone-regularization} regularizes this current away from one fixed Siu upper-level set \[ E_{\delta_0}(T_{\mathrm{mc}}), \] and the same type of local-mass contradiction then gives \[ \mathcal D^{\mathrm{opt}}_{\alpha,\beta} \subset E_{\delta_0}(T_{\mathrm{mc}}). \]

\iffalse
Our conclusion is weaker than Liu's analytic identification: $E_{\delta_0}(T_{\mathrm{mc}})$ may contain extraneous components, and we do not prove that $T_{\mathrm{mc}}$ has analytic singularities or assume that the pair is J-big. The gain is that the containment theorem requires neither analytic singularities nor a J-bigness assumption. 
\fi

Proposition~\ref{prop:optimal-contained-siu} instead starts from the
possibly non-analytic mass-concentration current of \cite{Che21}.
The partial regularization lemma replaces the full polar set by one
fixed Siu upper-level set
\[
E_{\delta_0}(T_{\mathrm{mc}}).
\]
The conclusion is correspondingly weaker: we prove containment in a
possibly larger analytic set, rather than equality with an analytic
non-ample locus. The advantage is that no analytic-singularity or
J-bigness assumption is imposed.
\end{rem}

\iffalse
\begin{cor}
\label{cor:original-optimal-analytic}
Assume that $\Gamma^{pp}_\be(\al)\leq 0.$ Then there exist a K\"ahler current $T_{\mathrm{mc}}\in\alpha$ and a
number $\delta_0>0$ such that
\[
\mathcal D^{\mathrm{opt}}_{\alpha,\beta}
=
\mathcal D^{\mathrm{opt}}_{\alpha,\beta_\tau}
\subset
E_{\delta_0}(T_{\mathrm{mc}}).
\]
\end{cor}

\begin{proof}
Lemma~\ref{lem:optimal-independent-path} identifies the optimal
destabilizing subvarieties of the original pair with those of the
boundary pair $(\alpha,\beta_\tau)$, that is $\mathrm{Des}^{\mathrm{opt}}_{\alpha,\beta_\tau}(X)
=\mathrm{Des}^{\mathrm{opt}}_{\alpha,\beta}(X)$. Apply
Proposition~\ref{prop:optimal-contained-siu} to that boundary pair.
\end{proof}
\fi

\begin{thm}
Let \(X\) be a compact Kähler manifold and let \(\alpha,\beta\) be Kähler classes with \(\Gamma^{pp}_\beta(\alpha)\le0\). Let
$ V\in\operatorname{Des}^{\rm opt}_{\alpha,\beta}(X) $
be a smooth \(m\)-dimensional optimal destabilizing subvariety. Then the set of \((m-1)\)-dimensional optimally destabilizing subvarieties of
$ (V,\alpha|_V,\beta|_V) $
is finite.

 %Let $ \Gamma^{pp}_\be(\al)\leq 0 $ on a K\"ahler manifold  $M$  of arbitrary  dimension $n\geq2$, and $V\in\mathrm{Des}_{\alpha,\beta}^{\mathrm{opt}}(M) $ be smooth variety of dimension $m$.  Then for the K\"ahler manifold $(V,\al)$ and the pair $ (\al|_V,\be|_V) $ the collection of $m-1$ varieties in  $ \mathrm{Des}_{\alpha,\beta}^{\mathrm{opt}}(
 %   V) $ is finite.
\end{thm}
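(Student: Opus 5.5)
Here is the plan: reduce to the divisor finiteness result, Corollary~\ref{cor:finite-optimal-divisors}, applied on the compact K\"ahler manifold $V$ with the restricted K\"ahler classes $\alpha|_V,\beta|_V$. The $(m-1)$-dimensional subvarieties of $V$ are exactly its prime divisors. So it suffices to check that the restricted pair satisfies $\Gamma^{pp}_{\beta|_V}(\alpha|_V)\le 0$ on $V$, and then identify which of its optimal destabilizers are meant.

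\emph{Step 1: the threshold on $V$.} By Lemma~\ref{lem:optimal-independent-path} and the shift $\beta_0=\beta-\Gamma^{pp}_\beta(\alpha)\alpha$ (which changes all quotients by the same constant), I will first normalize to the boundary situation where optimality of $V$ in $X$ means $\mu_{\alpha,\beta}(V)=c$. I will then compute the threshold of the pair on $V$, with total slope $c_V:=\mu_{\alpha|_V,\beta|_V}(V)$. For any proper $W\subsetneq V$ of dimension $q$, $W$ is also a proper subvariety of $X$, so
\[
\mu_{\alpha,\beta}(W)\le c-(n-q)\Gamma^{pp}_\beta(\alpha),\qquad c_V=c-(n-m)\Gamma^{pp}_\beta(\alpha).
\]
Subtracting gives
\[
\frac{c_V-\mu(W)}{m-q}\ \ge\ \Gamma^{pp}_\beta(\alpha).
\]
Hence $\Gamma^{pp}_{\beta|_V}(\alpha|_V)\ge\Gamma^{pp}_\beta(\alpha)$, with equality exactly when some $W\subset V$ is optimal in $X$.

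\emph{Step 2: case split.} If $\Gamma^{pp}_{\beta|_V}(\alpha|_V)\le 0$, Corollary~\ref{cor:finite-optimal-divisors} applies on $V$ (a smooth compact K\"ahler manifold, with $\dim V\ge2$; the case $m=1$ is trivial since there are no proper positive-dimensional subvarieties). It gives finitely many optimal prime divisors of $(V,\alpha|_V,\beta|_V)$. If instead the restricted threshold is positive, I would substitute the path $\beta_t$ on $V$ and invoke the $J$-positivity theory: by Song's theorem the restricted equation is solvable and the threshold is attained, if at all, only after moving to the boundary parameter $\tau_V$ on $V$. Lemma~\ref{lem:optimal-independent-path}, applied on $V$ with its own $c_V$, shows that the optimal set is unchanged along the affine path. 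So I would move to $\beta_{t}$ with $t>1$ (extrapolation, which keeps $\beta|_V$-type classes only numerically meaningful) or, better, rewrite the quotient identity to push the threshold to $0$ while keeping a K\"ahler representative. This is the point to handle carefully.

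\emph{Main obstacle.} The main obstacle is exactly this positive-threshold case. Corollary~\ref{cor:finite-optimal-divisors} needs a K\"ahler pair with nonpositive threshold, and the boundary shift $\beta|_V-\Gamma\alpha|_V$ may fail to be K\"ahler. What I would try first is to observe that Proposition~\ref{prop:optimal-contained-siu} really only uses Chen's theorems for K\"ahler $\chi$. By Lemma~\ref{lem:optimal-independent-path} applied on $V$, replacing $\beta|_V$ by $\beta_t|_V$ with $t>1$ chosen so the threshold becomes exactly zero keeps the optimal set. It remains K\"ahler as long as $t$ is below the point where $(1-t)\frac{c_V}{m}\alpha+t\beta$ leaves the K\"ahler cone. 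If that fails, I would instead argue that with positive threshold on $V$ the infimum is still attained only on a set contained in $E_{\rm nK}$ of a big class on $V$, mimicking Corollary~\ref{cor:finite curves}, and conclude by finiteness of components.
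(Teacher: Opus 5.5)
Your first case is correct and is the same move the paper makes. When the restricted pair has non-positive threshold, Corollary~\ref{cor:finite-optimal-divisors}, applied on the compact K\"ahler manifold $V$ with the K\"ahler classes $\alpha|_V,\beta|_V$, gives finiteness of optimal prime divisors.

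The gap is the second case. You leave it open, and the remedies you propose do not close it:
\begin{itemize}
\item Pushing a positive threshold $\Gamma_V$ of $V$ to zero by an affine change $\beta|_V\mapsto a\alpha|_V+t\beta|_V$ with $t>0$ forces $a=-t\Gamma_V$. The result is a positive multiple of $\beta|_V-\Gamma_V\alpha|_V$, which need not be K\"ahler. Chen's theorems behind Lemma~\ref{lem:boundary-mass-concentration} need a K\"ahler $\chi$.
\item The non-K\"ahler-locus fallback has no input. Corollary~\ref{cor:finite curves} works because J-nefness makes $c\alpha-\beta$ nef and big, and it only controls curves. For divisors of a J-stable pair you would need positivity hypotheses of the kind in \cite{KSD2}.
\end{itemize}
In effect you are trying to prove finiteness of minimizers for a genuinely J-stable pair. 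The theorem does not assert this, and the paper does not prove it.

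The missing observation is that this case requires nothing. After your normalization to $\Gamma^{\mathrm{pp}}_\beta(\alpha)=0$, optimality of $V$ gives $c_V=\mu_{\alpha,\beta}(V)=c$. For every $E\subsetneq V$ you then have $\mu_{\alpha|_V,\beta|_V}(E)=\mu_{\alpha,\beta}(E)\le c=c_V$. So the restricted pair is J-nef, and its optimally destabilizing subvarieties are exactly those $E\subsetneq V$ with $\mu_{\alpha,\beta}(E)=c$. That is, $\mathrm{Des}^{\mathrm{opt}}(V)=\mathrm{Des}^{\mathrm{opt}}_{\alpha,\beta}(X)|_V$, which is the identity the paper's proof rests on.

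The argument then splits cleanly:
\begin{itemize}
\item If this set contains an $(m-1)$-dimensional member, the restricted threshold is $0$ and Corollary~\ref{cor:finite-optimal-divisors} applies.
\item If the restricted threshold is positive, no subvariety of $V$ is destabilizing, since destabilizing requires a non-positive threshold. The set is empty.
\end{itemize}
Your Step~1 inequality already contains this once you read it at the boundary.

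A minor point: $\Gamma_V=\Gamma^{\mathrm{pp}}_\beta(\alpha)$ does not force some $W\subset V$ to be optimal in $X$, because the infimum on $V$ need not be attained. This is harmless for the argument.
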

\begin{proof}
    By Lemma~\ref{lem:optimal-independent-path}, we have $\mathrm{Des}^{\mathrm{opt}}_{\alpha,\beta_\tau}(X)
=\mathrm{Des}^{\mathrm{opt}}_{\alpha,\beta}(X)$, we can assume that $ \Gamma^{pp}_\be(\al)= 0 $
    For any subvariety $V\subset X$ we define
\[
\mathrm{Des}_{\alpha,\beta}^{\mathrm{opt}}(X)|_V
=\{E\in \mathrm{Des}_{\alpha,\beta}^{\mathrm{opt}}(X):E\subset V\},
\]
Then for any subvariety $V\in \mathrm{Des}_{\alpha,\beta}^{\mathrm{opt}}(X)$, we have
\[
\mathrm{Des}_{\alpha,\beta}^{\mathrm{opt}}(V)=\mathrm{Des}_{\alpha,\beta}^{\mathrm{opt}}(X)|_V.
\] Indeed, this following from $\mu_{\al|V,\be|V}(E)=\mu_{\al,\be}(E).$
Let $ \dim V= m $. Now, we define the J-equation on the manifold $V$, using the restriction of the K\"ahler classes $\al$ and $\be$ from $X$, as follows $$\Lambda_{\omega_{\psi_t}} {\chi_t}=m\dfrac{\chi_t\wedge\omega_{\psi_t}^{m-1}}{\omega_{\psi_t}^m}=c,$$

where $\chi, \omega $ and $ \chi_t$ are the restriction of the metrics on the manifold $V$ defined as in the equation (\ref{eq:path}) defined in the beginning of Section \ref{subsec:linear} and $\psi_t$ is the solution at the time of $t.$ Then by Corollary \ref{cor:finite-optimal-divisors}, we can see that, the $m-1$ dimensional varieties in $\mathrm{Des}_{\alpha,\beta}^{\mathrm{opt}}(V)$ is finite.

\end{proof}

\subsection{Relation with degeneration of the continuity path}

The analytic set in
Proposition ~\ref{prop:optimal-contained-siu} is constructed from an
auxiliary mass-concentration current. It should be distinguished from
the actual locus where the solutions of the continuity path lose
regularity.

Let $\chi_t\in\beta_t$ and suppose that, for $t<\tau$, the equation
\[
\operatorname{tr}_{\omega_t}\chi_t=c,
\qquad
\omega_t\in\alpha,
\]
admits a smooth solution. Define the local smooth-compactness locus
\[
\mathcal R_\tau
:=
\left\{
x\in X:
\begin{array}{c}
\text{there exist a neighborhood }U\ni x
\text{ and a sequence }t_j\uparrow\tau\\
\text{such that }\omega_{t_j}
\text{ converges in }C^\infty_{\mathrm{loc}}(U)
\end{array}
\right\}.
\]

\begin{prop}
\label{prop:optimal-forces-nonregularity}
Assume that $ \Gamma^{pp}_\be(\al)\leq 0 $. Then one has an inclusion $V\subset X\setminus\mathcal{R}_\tau$, for any $V\in\mathrm{Des}_{\alpha,\beta}^{\mathrm{opt}}(X) $ and hence
\[
\mathcal D^{\mathrm{opt}}_{\alpha,\beta}
\subset
X\setminus\mathcal R_\tau.
\]
\end{prop}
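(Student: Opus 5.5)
We argue by contradiction, using a local mass comparison analogous to Proposition~\ref{prop:optimal-contained-siu}. Here the smooth forms are the solutions $\omega_{t_j}$ of the continuity path rather than Chen's regularizations. Fix $V\in\mathrm{Des}^{\mathrm{opt}}_{\alpha,\beta}(X)$ of dimension $p$. By Lemma~\ref{lem:optimal-independent-path}, $V$ is also optimal for $(\alpha,\beta_\tau)$, and $\Gamma^{\mathrm{pp}}_{\beta_\tau}(\alpha)=0$. Hence
\[
\int_V\bigl(c\alpha^p-p\beta_\tau\wedge\alpha^{p-1}\bigr)=0 .
\]
Suppose some $x\in V$ lies in $\mathcal R_\tau$. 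Then there are a neighborhood $U\ni x$ and $t_j\uparrow\tau$ such that $\omega_{t_j}\to\omega_\tau$ in $C^\infty_{\mathrm{loc}}(U)$. Shrinking $U$, we may take $x\in V_{\mathrm{reg}}$, since $V_{\mathrm{reg}}$ is dense in $V$ and $U$ is open.

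The first step is to show that the limit $\omega_\tau$ is a genuine K\"ahler form on $U$ satisfying a strict cone condition there. Each $\omega_{t_j}$ solves $\operatorname{tr}_{\omega_{t_j}}\chi_{t_j}=c$ with $\chi_{t_j}>0$. This forces every eigenvalue of $\chi_{t_j}$ relative to $\omega_{t_j}$ to be at most $c$, so $\omega_{t_j}\geq c^{-1}\chi_{t_j}\geq a\chi$ uniformly, because $\chi_t\geq\min(1,c/n)\cdot$const$\cdot\chi$ along the path. The limit is therefore smooth and positive on $U$. Passing to the limit gives $\operatorname{tr}_{\omega_\tau}\chi_\tau=c$ on $U$. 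Since $\chi_\tau>0$, the trace over any proper subspace is strictly less than the full trace. Hence on a smaller neighborhood $U'\Subset U$ of $x$ we get $P_{\chi_\tau}(\omega_\tau)\leq c-\eta$ for some $\eta>0$. By smooth convergence, the same holds for $\omega_{t_j}$ with $\eta/2$ for $j$ large, after controlling $\chi_{t_j}-\chi_\tau$, which is $O(\tau-t_j)$.

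Next we integrate over $V$. Globally, each $\omega_{t_j}$ satisfies $\operatorname{tr}_{\omega_{t_j}}\chi_{t_j}=c$ with $\chi_{t_j}>0$. This gives $P_{\chi_{t_j}}(\omega_{t_j})<c$, so the restriction of $c\,\omega_{t_j}^p-p\chi_{t_j}\wedge\omega_{t_j}^{p-1}$ to $V_{\mathrm{reg}}$ is pointwise nonnegative. On $U'\cap V_{\mathrm{reg}}$ it is bounded below by $\tfrac{\eta}{2}\omega_{t_j}^p\geq\tfrac{\eta}{2}a^p\chi^p$. We thus obtain a lower bound
\[
\int_V\bigl(c\,\omega_{t_j}^p-p\chi_{t_j}\wedge\omega_{t_j}^{p-1}\bigr)\geq\delta_V:=\tfrac{\eta}{2}a^p\int_{U'\cap V_{\mathrm{reg}}}\chi^p>0
\]
that is independent of $j$. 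The left side is cohomological and equals $\int_V\bigl(c\alpha^p-p\beta_{t_j}\wedge\alpha^{p-1}\bigr)$. This expression is affine in $t_j$ and tends to $\int_V\bigl(c\alpha^p-p\beta_\tau\wedge\alpha^{p-1}\bigr)=0$ as $j\to\infty$, which is a contradiction. Therefore $V\cap\mathcal R_\tau=\varnothing$, and taking the union over all $V$ gives the proposition.

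The main obstacle is the first step: obtaining a uniform strict cone margin and uniform lower bound near $x$. This requires care at the endpoint, where $\chi_\tau$ need only be positive if $\tau<1$ or $\beta$ is K\"ahler; both hold here, since $\chi_t$ is a convex combination of K\"ahler forms. I would first try the eigenvalue argument above, noting that subtracting the smallest eigenvalue term gives a margin equal to the minimal relative eigenvalue of $\chi_\tau$, which is positive on compact subsets of $U$.
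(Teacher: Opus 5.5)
Your proposal is correct and follows the paper's proof essentially step for step. Both arguments reduce to the boundary pair $(\alpha,\beta_\tau)$ and use $\omega_{t_j}\geq c^{-1}\chi_{t_j}$ to see that the limit is K\"ahler. Both then compare the pointwise-nonnegative integrand on $V_{\mathrm{reg}}$ with its cohomological value, which tends to $0$.

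The only difference is cosmetic. The paper obtains the $j$-independent positive lower bound by letting $\int_{U'\cap V_{\mathrm{reg}}}$ converge to the strictly positive integral for $\omega_\tau$. You instead build an explicit uniform cone margin $\eta/2$.
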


\begin{proof}
By Lemma~\ref{lem:optimal-independent-path}, it is enough to consider
an irreducible $p$-dimensional optimal destabilizing subvariety $V$
for the boundary pair $(\alpha,\beta_\tau)$.

Suppose that some $x\in V$ belongs to $\mathcal R_\tau$. Then there
is a neighborhood $U\ni x$ and a sequence $t_j\uparrow\tau$ such
that
\(
\omega_{t_j}\longrightarrow\omega_\tau
\)
smoothly on compact subsets of $U$. Choose
\(
U'\Subset U
\)
with
\[
U'\cap V_{\mathrm{reg}}\neq\varnothing.
\]
Passing to the limit in the equation gives
\[
\operatorname{tr}_{\omega_\tau}\chi_\tau=c
\]
on $U$. And locally, this implies that $\chi_{t_j}\le c\omega_{t_j}$, hence $\omega_\tau\geq\frac{1}{c}\chi_\tau$. So, the limit remains K\"ahler on $U$.
Also at every point of $V_{\mathrm{reg}}\cap U$, the trace of $\chi_\tau$
on the proper $p$-plane $T^{1,0}V$ is strictly smaller than its full
$n$-dimensional trace. 

Therefore
$c\omega_\tau^p-p\chi_\tau\wedge\omega_\tau^{p-1}>0$
on $V_{\mathrm{reg}}\cap U$. It follows that
\begin{equation}\label{ineq:positive mass}
    \int_{U'\cap V_{\mathrm{reg}}}
\left(
c\omega_\tau^p
-
p\chi_\tau\wedge\omega_\tau^{p-1}
\right)>0.
\end{equation}

For every $t_j<\tau$, the same linear-algebra argument gives
\[
c\omega_{t_j}^p
-
p\chi_{t_j}\wedge\omega_{t_j}^{p-1}
>0
\]
on $V_{\mathrm{reg}}$. Hence
\[
\int_V\left(c\omega_{t_j}^p-
p\chi_{t_j}\wedge\omega_{t_j}^{p-1}
\right)\geq
\int_{U'\cap V_{\mathrm{reg}}}
\left(c\omega_{t_j}^p-p\chi_{t_j}\wedge\omega_{t_j}^{p-1}
\right).
\]
The right-hand side converges, by smooth convergence, to the strictly
positive number in \eqref{ineq:positive mass}.
On the other hand, the left-hand side is the cohomological quantity

${\int_V\left(c\alpha^p-p\beta_{t_j}\wedge\alpha^{p-1}\right)},$
which converges to
\[
\int_V
\left(
c\alpha^p
-
p\beta_\tau\wedge\alpha^{p-1}
\right)=0
\]
because $V$ is optimal for the boundary pair. This is a
contradiction.
\end{proof}

\begin{rem}
Proposition~\ref{prop:optimal-forces-nonregularity} says that every optimal destabilizing subvariety is contained in the locus where local smooth compactness fails.
No analyticity is asserted for $ X\setminus\mathcal R_\tau.$
The two loci introduced above therefore serve different purposes:
\[
E_{\delta_0}(T_{\mathrm{mc}})
\]
is a fixed auxiliary analytic set used for finiteness, whereas
\[
X\setminus\mathcal R_\tau
\]
records actual degeneration of the continuity path.
\end{rem}

\bigskip

\section{Rigidity of optimally destabilizing cycles}
\label{sec:rigidity}
\noindent In this section, we record a useful rigidity property of optimal
destabilizing subvarieties. The point is that, the optimal destabilizing subvarieties cannot move
in positive-dimensional families. Thus, any possible failure of finiteness
in higher dimension must come from isolated rigid cycles.

%Throughout this section we assume that $$\Gamma^{pp}_{\beta}(\alpha)=0.$$
%Thus, an optimal destabilizing subvariety is a proper irreducible analytic
%subvariety \(V\subsetneq X\) satisfying
%\(
%\mu_{\alpha,\beta}(V)=c.
%\)

%We shall distinguish carefully between the unconditional reverse Khovanskii--Teissier estimates and a stronger one-Rayleigh inequality, which is an additional hypothesis and does not hold for arbitrary nef triples according to \cite{HuXiao}. 

\subsection{A rigidity criterion in terms of a nef threshold}
\label{subsec:nef-threshold-rigidity}

In this subsection we record a rigidity consequence of the  weak reverse Khovanskii--Teissier inequality. The point is
that, the weaker coefficient-$2$ inequality is always available and gives useful rigidity provided the nef threshold of $\beta$ in
the direction of $\alpha$ is sufficiently large.

Let $X$ be a compact K\"ahler manifold of dimension $n$, and let
$\alpha,\beta$ be K\"ahler classes. Consider the following {\em nef threshold}
\[
s_\alpha(\beta)
:=
\sup\{s\in\mathbb R:\beta-s\alpha\ \text{is nef}\}.
\]
Since $\beta$ is K\"ahler, this number is positive. Moreover, if
$0<\varepsilon<s_\alpha(\beta)$, then $\beta-\varepsilon\alpha$ is
K\"ahler: indeed, choose $t>\varepsilon$ such that
$\beta-t\alpha$ is nef; then
\[
\beta-\varepsilon\alpha
=
(\beta-t\alpha)+(t-\varepsilon)\alpha
\]
is the sum of a nef class and a K\"ahler class.

We first record the following standard consequence of the Hodge index theorem in the precise form needed below (see \cite{LehmannXiao} for background).

\begin{lem}[Weak reverse Khovanskii--Teissier inequality]
\label{lem:weak-reverse-KT}
Let $Y$ be a compact K\"ahler manifold of dimension $m$. Let $A$ be
nef with $A^m>0$, and let $C,D$ be nef classes. Then
\[
(A^m)(C\cdot D\cdot A^{m-2})
\leq
2(C\cdot A^{m-1})(D\cdot A^{m-1}).
\]
\end{lem}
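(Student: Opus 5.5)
We plan to reduce to the Hodge index theorem for the intersection form
\[
Q(x,y):=x\cdot y\cdot A^{m-2}
\]
on $H^{1,1}(Y,\mathbb R)$. We first approximate: fix a K\"ahler class $\omega$ on $Y$ and replace $A$ by $A_\varepsilon:=A+\varepsilon\omega$, which is K\"ahler. Both sides of the desired inequality are polynomial, hence continuous, in $\varepsilon$. It therefore suffices to prove the inequality for $A_\varepsilon$ and let $\varepsilon\to0$. Since $A^m>0$, we also have $A_\varepsilon^m>0$ for all small $\varepsilon$. So we may assume $A$ is K\"ahler. The classes $C,D$ stay merely nef; we use nefness only through $C^2\cdot A^{m-2}\ge0$, $D^2\cdot A^{m-2}\ge0$, $C\cdot A^{m-1}\ge0$ and $D\cdot A^{m-1}\ge0$. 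Each of these holds for K\"ahler classes and passes to nef limits.

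With $A$ K\"ahler, the Hodge--Riemann bilinear relations say that $Q$ has signature $(1,h^{1,1}-1)$. Moreover, $Q(A,A)=A^m>0$, and $Q$ is negative definite on the primitive subspace $P:=\{x: x\cdot A^{m-1}=0\}$. We decompose
\[
C=aA+C',\qquad D=bA+D',\qquad a=\frac{C\cdot A^{m-1}}{A^m}\ge0,\quad b=\frac{D\cdot A^{m-1}}{A^m}\ge0,
\]
with $C',D'\in P$. Then
\[
Q(C,D)=ab\,A^m+Q(C',D').
\]
Since $-Q$ is positive definite on $P$, the Cauchy--Schwarz inequality gives
\[
Q(C',D')\le \sqrt{-Q(C',C')}\,\sqrt{-Q(D',D')}.
\]

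The key input is nefness. We have $0\le Q(C,C)=a^2A^m+Q(C',C')$, so $-Q(C',C')\le a^2A^m$, and similarly $-Q(D',D')\le b^2A^m$. Substituting gives $Q(C',D')\le ab\,A^m$, and hence $Q(C,D)\le 2ab\,A^m$. Multiplying by $A^m$ yields
\[
(A^m)(C\cdot D\cdot A^{m-2})\le 2(C\cdot A^{m-1})(D\cdot A^{m-1}),
\]
as claimed.

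There is no serious obstacle here. The only points needing care are that the Hodge index statement requires $A$ to be K\"ahler, which the approximation step handles, and the cases $m=1$ (vacuous) and $m=2$, where $A^{m-2}=1$ and the argument applies verbatim.
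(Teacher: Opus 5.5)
Your proposal is correct and follows essentially the same route as the paper: reduce to $A$ K\"ahler by approximation, split $C$ and $D$ along $A$ and the $A$-primitive hyperplane, use nefness to get $-q(C_0,C_0)\le a^2A^m$ and $-q(D_0,D_0)\le b^2A^m$, and conclude with Cauchy--Schwarz for the negative definite form. Your explicit remark that $a,b\geq 0$ is exactly what turns $\sqrt{(-q(C_0,C_0))(-q(D_0,D_0))}\le |a|\,|b|\,A^m$ into the bound $ab\,A^m$; the paper leaves this point implicit.
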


\begin{proof}
We recall the argument for the convenience of the reader.
By approximation, it is enough to prove the statement when $A$ is
K\"ahler. Set
\(
q(u,v):=u\cdot v\cdot A^{m-2}.
\)
By the Hodge index theorem, $q$ has signature $(1,*)$, and it is
negative definite on the primitive hyperplane
\(
\{u:q(u,A)=0\}.
\)
Let $ C_0=C-aA$ and $ D_0=D-bA $, where the constant $a$ and $b$ are chosen in such way to satisfy the conditions $q(C_0,A)=q(D_0,A)=0.$
Then
\[
a=\frac{C\cdot A^{m-1}}{A^m},
\qquad
b=\frac{D\cdot A^{m-1}}{A^m}.
\]
Since $C$ and $D$ are nef, we have $q(C,C)\geq0$ and $q(D,D)\geq0$.
Therefore
\[
-q(C_0,C_0)\leq a^2A^m,
\qquad
-q(D_0,D_0)\leq b^2A^m.
\]
By the Cauchy--Schwarz inequality for the negative definite form $-q$
on the primitive hyperplane,
\[
q(C_0,D_0)
\leq
\sqrt{(-q(C_0,C_0))(-q(D_0,D_0))}
\leq
abA^m.
\]
Hence
\[
q(C,D)
=
abA^m+q(C_0,D_0)
\leq
2abA^m.
\]
Substituting the values of $a$ and $b$ gives
\[
C\cdot D\cdot A^{m-2}
\leq
2\frac{(C\cdot A^{m-1})(D\cdot A^{m-1})}{A^m},
\]
which is the desired inequality.
\end{proof}

We now prove the threshold rigidity statement.

\begin{thm}
\label{thm:threshold-rigidity1}
Let $X$ be a compact K\"ahler manifold of dimension $n$, and let
$\alpha,\beta$ be K\"ahler classes.
Fix an integer $1\leq p\leq n-1$ and assume that
\[\label{ineq:nef threshold}
s_\alpha(\beta)>\Gamma^{\mathrm{pp}}_\beta(\alpha)+\frac{p-1}{p(p+1)}\bigl(c-n\Gamma^{\mathrm{pp}}_\beta(\alpha)\bigr).\tag{$\star$}
\]
Then the $p$-dimensional optimal subvarieties
of $X$ cannot move in a nonconstant compact one-parameter family. More precisely, there is no nonconstant analytic family \(\{V_t\}_{t\in T}\) over a connected compact complex curve \(T\) whose general member is an irreducible \(p\)-dimensional optimal subvariety.
\end{thm}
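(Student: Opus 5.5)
The idea is to sweep out the hypothetical family, pull everything back to a resolution of the universal family, and play the weak reverse Khovanskii--Teissier inequality (Lemma~\ref{lem:weak-reverse-KT}) against the optimality of the fibers and of the sweep-out. Suppose, for contradiction, that $\{V_t\}_{t\in T}$ is a nonconstant analytic family over a connected compact curve $T$ whose general member is an irreducible $p$-dimensional optimal subvariety. Since the family is nonconstant, the supports of the general members are not all equal. The setup of the subsection on universal families and sweep-outs then gives the following objects:
\begin{itemize}
\item the sweep-out $W\subset X$, with $\dim W=p+1$;
\item a resolution $Y$ of $\mathcal U_T^{\mathrm{main}}$, which is compact K\"ahler of dimension $p+1$, with maps $f\colon Y\to T$ and $g\colon Y\to W$;
\item the classes $A=g^*\alpha$ (nef and big) and $L=f^*\ell$ (nef, with $L\cdot A^p>0$).
\end{itemize}
By the constancy of intersection numbers in connected families, optimality holds at every parameter value. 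So for general $t$,
\[
\mu_{\alpha,\beta}(V_t)=c-(n-p)\Gamma^{\mathrm{pp}}_\beta(\alpha).
\]

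\textbf{Upper bound on the slope of $W$.} If $p+1<n$, then $W$ is a proper subvariety, and the definition of $\Gamma^{\mathrm{pp}}_\beta(\alpha)$ gives
\[
\mu_{\alpha,\beta}(W)\leq c-(n-p-1)\Gamma^{\mathrm{pp}}_\beta(\alpha).
\]
If $p=n-1$, then $W=X$ and $\mu_{\alpha,\beta}(W)=c$, which is the same formula. So this bound holds in all cases.

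\textbf{Twisting by a nef class.} Fix $s$ with
\[
\Gamma^{\mathrm{pp}}_\beta(\alpha)+\tfrac{p-1}{p(p+1)}\bigl(c-n\Gamma^{\mathrm{pp}}_\beta(\alpha)\bigr)<s<s_\alpha(\beta),
\]
which is possible by $(\star)$. Then $\delta:=\beta-s\alpha$ is nef, so $C:=g^*\delta$ is nef. The shift formula $\mu_{\alpha,\beta-s\alpha}(Z)=\mu_{\alpha,\beta}(Z)-s\dim Z$ yields
\[
\mu_{\alpha,\delta}(V_t)=c-(n-p)\Gamma^{\mathrm{pp}}_\beta(\alpha)-ps,
\qquad
\mu_{\alpha,\delta}(W)\leq c-(n-p-1)\Gamma^{\mathrm{pp}}_\beta(\alpha)-(p+1)s.
\]

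\textbf{Applying the reverse inequality.} Apply Lemma~\ref{lem:weak-reverse-KT} on $Y$ with $m=p+1$, $D=L$, and the nef classes $A$ and $C$:
\[
A^{p+1}\,(C\cdot L\cdot A^{p-1})\leq 2\,(C\cdot A^p)(L\cdot A^p).
\]
By the degree-cancellation identities, in which $d_t$ and $d_W$ cancel in each ratio, this translates into
\[
\frac{\mu_{\alpha,\delta}(V_t)}{p}\leq \frac{2\,\mu_{\alpha,\delta}(W)}{p+1}.
\]

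\textbf{Conclusion.} Substitute the expression for $\mu_{\alpha,\delta}(V_t)$ and the upper bound for $\mu_{\alpha,\delta}(W)$, multiply by $p(p+1)$, and collect the $\Gamma:=\Gamma^{\mathrm{pp}}_\beta(\alpha)$ terms. This is routine linear algebra; the coefficient of $\Gamma$ is
\[
(n-p)(1-p)+2p=-n(p-1)+p(p+1).
\]
The result is
\[
p(p+1)s\leq (p-1)(c-n\Gamma)+p(p+1)\Gamma,
\qquad\text{i.e.}\qquad
s\leq \Gamma+\tfrac{p-1}{p(p+1)}(c-n\Gamma).
\]
This contradicts the choice of $s$, so no such family exists.

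\textbf{Main obstacle.} The delicate part is not the numerics but justifying the geometric reduction. One must check that a nonconstant family really produces a $(p+1)$-dimensional sweep-out, with $g$ generically finite and $F_t\to V_t$ generically finite. One must also verify that the slope bound on $W$ is legitimate, which requires care when $W=X$. For these points I would rely directly on the cycle-space discussion in the preliminaries. I would also note that Lemma~\ref{lem:weak-reverse-KT} only needs $A$ nef with $A^{p+1}>0$, so no K\"ahler assumption on $Y$'s pulled-back classes is needed beyond what is recorded there.
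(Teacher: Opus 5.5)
Your proposal is correct and follows essentially the paper's proof. You twist by the nef class $\beta-s\alpha$ with $s$ strictly between the threshold in $(\star)$ and $s_\alpha(\beta)$, apply Lemma~\ref{lem:weak-reverse-KT} on the resolved universal family with $D=L$, and compare the result with the upper bound on $\mu_{\alpha,\beta}(W)$. The only cosmetic difference is that you handle $p\le n-2$ and $p=n-1$ together through the single bound $\mu_{\alpha,\beta}(W)\le c-(n-p-1)\Gamma^{\mathrm{pp}}_\beta(\alpha)$, whereas the paper treats the two cases separately.
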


\begin{proof}
Suppose, for contradiction, that such a family exists. Thus we have a
compact connected curve $T$ and a nonconstant family of irreducible
$p$-dimensional subvarieties
$\{V_t\}_{t\in T}$ 
such that a general member $V_t$ is  optimal.
Let
\[
W:=\overline{\bigcup_{t\in T}V_t}
\]
be the sweep-out. Since the family is nonconstant,
$\dim W=p+1.$
In particular, if $p\leq n-2$ then $W$ is a proper subvariety of
$X$, while if $p=n-1$ then necessarily $W=X$.

Put
\[
\theta
:=
\Gamma^{\mathrm{pp}}_\beta(\alpha)+
\frac{p-1}{p(p+1)}
\bigl(c-n\Gamma^{\mathrm{pp}}_\beta(\alpha)\bigr).
\]
By hypothesis, $s_\alpha(\beta)>\theta$. Choose $ 0<\varepsilon<s_\alpha(\beta)$ 
such that
\begin{equation}\label{ineq:theta bound}
    \varepsilon>\theta.
\end{equation}
Let  $\beta_\varepsilon:=\beta-\varepsilon\alpha, $ 
then $\beta_\varepsilon$ is K\"ahler, hence nef.
Let
\[
f:Y\longrightarrow T,
\qquad
g:Y\longrightarrow W\subset X
\]
be a resolution of the universal family. Thus $Y$ is smooth compact
K\"ahler, $f$ has general fiber mapping generically finitely onto
$V_t$, and $g$ is generically finite onto $W$. Let $\ell$ be a
K\"ahler class on $T$, and set
\[
A:=g^*\alpha,
\qquad
B_\varepsilon:=g^*\beta_\varepsilon,
\qquad
L:=f^*\ell.
\]
The classes $A,B_\varepsilon,L$ are nef, and
$A^{p+1}>0$ because $g$ is generically finite onto the $(p+1)$-dimensional
subvariety $W$.
The standard degree-cancellation formulas give
\begin{equation}\label{eq:V family threshold}
    \mu_{\alpha,\beta_\varepsilon}(V_t)=p\frac{B_\varepsilon\cdot L\cdot A^{p-1}}{L\cdot A^p},
\end{equation}

and
\begin{equation}\label{eq:W threshold}
    \mu_{\alpha,\beta_\varepsilon}(W)=(p+1)\frac{B_\varepsilon\cdot A^p}{A^{p+1}}.
\end{equation}

Indeed, intersecting with $L=f^*\ell$ restricts the intersection
calculation to a general fiber of $f$, while the generically finite
degree of the map to $V_t$ cancels from numerator and denominator.
Similarly, the generically finite degree of $g:Y\to W$ cancels in
the slope computation for $W$.

If we apply Lemma~\ref{lem:weak-reverse-KT} on the $(p+1)$-fold $Y$ with
$
m=p+1, C=B_\varepsilon,$ and $ D=L,$ then we obtain
\[
(A^{p+1})(B_\varepsilon\cdot L\cdot A^{p-1})
\leq
2(B_\varepsilon\cdot A^p)(L\cdot A^p).
\]
Equivalently,
\[
(p+1)\frac{B_\varepsilon\cdot A^p}{A^{p+1}}
\geq
\frac{p+1}{2}
\frac{B_\varepsilon\cdot L\cdot A^{p-1}}
     {L\cdot A^p}.
\]
Using \eqref{eq:V family threshold} and \eqref{eq:W threshold}, this becomes
\begin{equation}\label{ineq:W and V threshold}
    \mu_{\alpha,\beta_\varepsilon}(W)\geq \frac{p+1}{2p} \mu_{\alpha,\beta_\varepsilon}(V_t).
\end{equation}

Since $V_t$ is optimal for the original class
$\beta$, we have
\begin{equation}\label{eq:perturbed V threshold}
    \mu_{\alpha,\beta}(V_t)=c-(n-p)\Gamma^{\mathrm{pp}}_\beta(\alpha).
\end{equation}

Because replacing $\beta$ by
$\beta_\varepsilon=\beta-\varepsilon\alpha$ subtracts
$d\varepsilon$ from the slope of every $d$-dimensional subvariety,
we get
\begin{equation}\label{eq:V family threshold1}
    \mu_{\alpha,\beta_\varepsilon}(V_t)=c-(n-p)\Gamma^{\mathrm{pp}}_\beta(\alpha)-p\varepsilon.
\end{equation}

Similarly,
\begin{equation}\label{eq:perturbed W threshold}
    \mu_{\alpha,\beta_\varepsilon}(W)=\mu_{\alpha,\beta}(W)-(p+1)\varepsilon.
\end{equation}

Combining \eqref{ineq:W and V threshold}, \eqref{eq:perturbed V threshold}, and \eqref{eq:perturbed W threshold}, we find
\[
\mu_{\alpha,\beta}(W)-(p+1)\varepsilon
\geq
\frac{p+1}{2p}
\bigl(c-(n-p)\Gamma^{\mathrm{pp}}_\beta(\alpha)-p\varepsilon\bigr).
\]
Therefore
\begin{equation}\label{ineq:weak W threshold}
    \mu_{\alpha,\beta}(W) \geq\frac{p+1}{2p}\bigl(c-(n-p)\Gamma^{\mathrm{pp}}_\beta(\alpha)\bigr)+\frac{p+1}{2}\varepsilon.
\end{equation}

We now distinguish two cases: $p\leq n-2$ and $p=n-1$.

Suppose first that $p\leq n-2$. Then $W\subsetneq X$, and by
definition 
\[
\Gamma^{\mathrm{pp}}_\beta(\alpha)
\leq
\frac{c-\mu_{\alpha,\beta}(W)}
     {n-p-1}.
\]
Equivalently,
\begin{equation}\label{ineq: W threshold}
    \mu_{\alpha,\beta}(W)\leq c-(n-p-1)\Gamma^{\mathrm{pp}}_\beta(\alpha).
\end{equation}

On the other hand, a direct calculation gives
\[  c-(n-p-1)\Gamma^{\mathrm{pp}}_\beta(\alpha)
-\frac{p+1}{2p}
\bigl(c-(n-p)\Gamma^{\mathrm{pp}}_\beta(\alpha)\bigr)
=\frac{p+1}{2}\theta.
\]
Since $\varepsilon>\theta$ by \eqref{ineq:theta bound}, inequality \eqref{ineq:weak W threshold} implies
\[
\mu_{\alpha,\beta}(W)
>
c-(n-p-1)\Gamma^{\mathrm{pp}}_\beta(\alpha),
\]
contradicting \eqref{ineq: W threshold}.

It remains to consider $p=n-1$. In this case $W=X$, and \eqref{ineq:weak W threshold} becomes
\begin{equation}\label{ineq:c and threshold}
    c=\mu_{\alpha,\beta}(X)\geq\frac{n}{2(n-1)}(c-\Gamma^{\mathrm{pp}}_\beta(\alpha))+\frac{n}{2}\varepsilon.
\end{equation}

But now
\[
c-
\frac{n}{2(n-1)}(c-\Gamma^{\mathrm{pp}}_\beta(\alpha))
=
\frac{n}{2}
\left[
\Gamma^{\mathrm{pp}}_\beta(\alpha)+
\frac{n-2}{n(n-1)}
(c-n\Gamma^{\mathrm{pp}}_\beta(\alpha))
\right]
=
\frac{n}{2}\theta.
\]
Since $\varepsilon>\theta$, the right-hand side of \eqref{ineq:c and threshold} is strictly
larger than $c$, a contradiction.

Thus no such nonconstant compact one-parameter family exists.
\end{proof}

\begin{rem}
The theorem is often most transparent after the linear normalization
\[
\beta_0:=\beta-\Gamma^{\mathrm{pp}}_\beta(\alpha)\alpha.
\]
Numerically,
\[
\mu_{\alpha,\beta_0}(Z)
=
\mu_{\alpha,\beta}(Z)-(\dim Z)\Gamma^{\mathrm{pp}}_\beta(\alpha),
\]
and hence
\[
\mu_{\alpha,\beta_0}(X)=c-n\Gamma^{\mathrm{pp}}_\beta(\alpha).
\]
Moreover, for every proper positive-dimensional subvariety
$Z\subset X$,
\[
\frac{\mu_{\alpha,\beta_0}(X)-\mu_{\alpha,\beta_0}(Z)}{n-\dim Z}=
\frac{c-\mu_{\alpha,\beta}(Z)}{n-\dim Z}
-\Gamma^{\mathrm{pp}}_\beta(\alpha).
\]
Thus the same subvarieties are optimal, but the infimum has been
shifted to zero:
\[
\Gamma^{\mathrm{pp}}_{\beta_0}(\alpha)=0
\]
as a numerical statement. Also
$ s_\alpha(\beta_0)=s_\alpha(\beta)-\Gamma^{\mathrm{pp}}_\beta(\alpha). $
Therefore, the hypothesis \eqref{ineq:nef threshold} is equivalent to
\[
s_\alpha(\beta_0)
>
\frac{p-1}{p(p+1)}
\mu_{\alpha,\beta_0}(X).
\]
This is the $\Gamma^{\mathrm{pp}}_\be(\al)=0$ version of the threshold
criterion. The proof above does not require $\beta_0$ itself to be
K\"ahler; the K\"ahler class used in the argument is
\(
\beta_\varepsilon=\beta-\varepsilon\alpha
\)
with $0<\varepsilon<s_\alpha(\beta)$.
\end{rem}

\begin{rem}
When $p\leq n-2$, the proof shows more explicitly that a moving
family of $p$-dimensional optimal
subvarieties would force its sweep-out $W$ to satisfy
\[
\mu_{\alpha,\beta}(W)
>
c-(n-p-1)\Gamma^{\mathrm{pp}}_\beta(\alpha),
\]
or equivalently
\[
\frac{c-\mu_{\alpha,\beta}(W)}{n-p-1}
<\Gamma^{\mathrm{pp}}_\beta(\alpha).
\]
This contradicts the defining property of
$\Gamma^{\mathrm{pp}}_\beta(\alpha)$.

When $p=n-1$, the sweep-out is the whole manifold $X$, and the same
intersection-theoretic estimate instead forces
\(
\mu_{\alpha,\beta}(X)>c,
\)
which directly contradicts the definition
$c=\mu_{\alpha,\beta}(X)$.
\end{rem}

\begin{rem}[Relation with adding multiples of $\alpha$]
The criterion is invariant under replacing $\beta$ by
$\beta+t\alpha$. Indeed,
\[
s_\alpha(\beta+t\alpha)=s_\alpha(\beta)+t, \mu_{\alpha,\beta+t\alpha}(X)=c+nt, 
\]
and
\[
\Gamma^{\mathrm{pp}}_{\beta+t\alpha}(\alpha)=\Gamma^{\mathrm{pp}}_\beta(\alpha)+t.
\]
Thus
\[
s_\alpha(\beta+t\alpha)
-
\Gamma^{\mathrm{pp}}_{\beta+t\alpha}(\alpha)
=
s_\alpha(\beta)-\Gamma^{\mathrm{pp}}_\beta(\alpha),
\]
and
\[
\mu_{\alpha,\beta+t\alpha}(X)
-
n\Gamma^{\mathrm{pp}}_{\beta+t\alpha}(\alpha)
=
c-n\Gamma^{\mathrm{pp}}_\beta(\alpha).
\]
As a consequence, the threshold condition depends only on the
boundary-normalized class $\beta-\Gamma^{\mathrm{pp}}_\beta(\alpha)\alpha$ and 
adding a large multiple of $\alpha$ only shifts all destabilizing
quotients by the same constant.
\end{rem}

\subsection{Applications of rigidity}
\label{subsec:applications-rigidity}

We now record some consequences of
Theorem~\ref{thm:threshold-rigidity}. The first two statements are
naturally formulated dimension by dimension. Thus, fix
$1\leq p\leq n-1$
and assume that
\[
s_\alpha(\beta)
>
\Gamma^{\mathrm{pp}}_\beta(\alpha) +
\frac{p-1}{p(p+1)}
\bigl(c-n\Gamma^{\mathrm{pp}}_\beta(\alpha)\bigr).
\tag{$\star_p$}
\]
At the J-semistable boundary $\Gamma^{\mathrm{pp}}_\beta(\alpha)=0$, this becomes
\[
s_\alpha(\beta)>
\frac{p-1}{p(p+1)}c.
\]
Since $ \frac{p-1}{p(p+1)}\leq\frac16$
for every $1\leq p\leq n-1$, the single condition
\[
s_\alpha(\beta)>\frac{c}{6}
\tag{$\star_{\mathrm{all}}$}
\]
implies $(\star_p)$ simultaneously in every dimension at the
boundary.

\subsubsection{Projective cycle-rigidity}

\begin{cor}[Projective cycle-rigidity]
\label{cor:projective-cycle-rigidity}
Let $X$ be projective, and suppose that $(\star_p)$ holds. Then every
$p$-dimensional optimal subvariety is isolated
in the Chow variety among irreducible reduced $p$-dimensional cycles.
\end{cor}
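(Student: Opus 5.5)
The plan is to argue by contradiction and reduce to Theorem~\ref{thm:threshold-rigidity1}, using the projective setup from the preliminaries on the Chow variety. Let $V$ be a $p$-dimensional optimal subvariety, let $d$ be its degree with respect to the fixed embedding $X\hookrightarrow\mathbb P^N$, and suppose that $[V]$ is not isolated in $\operatorname{Chow}^{\mathrm{irr}}_{p,d}(X)$.

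\textbf{Step 1: produce a complete curve of cycles.} Since $\operatorname{Chow}_{p,d}(X)$ is projective, algebraic curve selection gives an integral projective curve $C\subset\operatorname{Chow}_{p,d}(X)$ through $[V]$ whose general point parametrizes an irreducible reduced cycle. Let $T\to C$ be the normalization. Pulling back the universal cycle gives an analytic family $\{Z_t\}_{t\in T}$ over a smooth connected compact curve, with $Z_{t_0}=V$ for some $t_0\in T$ and $Z_t=V_t$ irreducible and reduced for general $t$. The family is nonconstant, because $C$ is not a point and distinct points of the Chow variety parametrize distinct cycles.

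\textbf{Step 2: general members are optimal.} By the discussion of families of cycles, the homology class $[Z_t]$ is locally constant on the connected base $T$. Hence $\int_{Z_t}\beta\wedge\alpha^{p-1}$ and $\int_{Z_t}\alpha^p$ are constant in $t$, and therefore
\[
\mu_{\alpha,\beta}(Z_t)=\mu_{\alpha,\beta}(V)=c-(n-p)\Gamma^{\mathrm{pp}}_\beta(\alpha)
\]
for every $t\in T$. For general $t$, the cycle $Z_t=V_t$ is an irreducible $p$-dimensional subvariety with this optimal slope. It remains to check that $V_t$ is proper. This is automatic for $p\le n-1$. So the general member is optimal.

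\textbf{Step 3: contradiction.} We now have a nonconstant analytic family over a connected compact complex curve whose general member is an irreducible $p$-dimensional optimal subvariety. Under $(\star_p)$, Theorem~\ref{thm:threshold-rigidity1} rules out exactly such a family. Hence $[V]$ is isolated.

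\textbf{Main obstacle.} The main point to check is that ``nonconstant'' in Theorem~\ref{thm:threshold-rigidity1} matches what Step 1 produces. The theorem's proof uses that the supports of the general members vary, which gives $\dim W=p+1$. Suppose instead that the supports were constant. Then, since the general members are irreducible, reduced and of fixed degree, the cycles themselves would coincide, so $C$ would map to a point, which is impossible. I would check this point carefully, invoking \cite[Theorem~I.3.21]{Kol96} for the fact that the Chow point determines the cycle.
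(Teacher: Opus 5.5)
Your proposal is correct and follows the paper's proof essentially step by step. You use curve selection in the projective Chow variety, normalize the curve to get a compact family, use constancy of the fundamental class to make the general member optimal, and then contradict Theorem~\ref{thm:threshold-rigidity1}. Your extra check that a nonconstant Chow curve forces the supports of the general members to vary (so that the sweep-out has dimension $p+1$) is a point the paper leaves implicit.
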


\begin{proof}
Fix a projective embedding of $X$, and let $d$ be the degree of the
subvariety under consideration. Denote by
\[
\operatorname{Chow}^{\mathrm{irr}}_{p,d}(X)
\subset
\operatorname{Chow}_{p,d}(X)
\]
the locus parametrizing irreducible reduced cycles.

Suppose that a $p$-dimensional optimal subvariety $V$ is not isolated in
$\operatorname{Chow}^{\mathrm{irr}}_{p,d}(X)$. Since the Chow variety
is projective, algebraic curve selection gives an integral projective
curve
\(
C\subset\operatorname{Chow}_{p,d}(X)
\)
through $[V]$ whose general point represents an irreducible reduced
cycle. After replacing $C$ by its normalization, the universal cycle
over the Chow variety restricts to an analytic family
\(
\{V_t\}_{t\in C}
\)
over a smooth compact curve.

The fundamental homology class of the members of a connected family
of cycles is constant. Hence
\[
\mu_{\alpha,\beta}(V_t)
=
\mu_{\alpha,\beta}(V)
\]
for every $t\in C$. In particular, the general member $V_t$ is again
 optimal. The family is nonconstant, contrary
to Theorem~\ref{thm:threshold-rigidity}.
\end{proof}

\subsubsection{Connected automorphism groups}

\begin{cor}[Automorphism invariance]
\label{cor:automorphism-invariance}
Let $X$ be projective, suppose that $(\star_p)$ holds, and let
\(
G\subset\operatorname{Aut}^0(X)
\)
be a connected algebraic subgroup. Then every $p$-dimensional
optimal subvariety is $G$-invariant.
\end{cor}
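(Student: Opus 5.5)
The plan is to deduce the statement directly from Corollary~\ref{cor:projective-cycle-rigidity}, using the facts collected in the subsection on the projective case and connected group actions. Fix a $p$-dimensional optimal subvariety $V$ of degree $d$ with respect to a fixed projective embedding. By the functoriality of Chow varieties \cite[Theorem~I.3.21]{Kol96}, the algebraic action $G\times X\to X$ induces an algebraic action of $G$ on $\operatorname{Chow}_{p,d}(X)$ given by $(g,[Z])\mapsto[gZ]$. We then consider the orbit map
\[
G\longrightarrow\operatorname{Chow}_{p,d}(X),\qquad g\longmapsto[gV].
\]

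The first step is to show that the orbit stays inside the locus of irreducible reduced optimal cycles. Each $g\in G$ is an automorphism, so $gV$ is again an irreducible reduced $p$-dimensional subvariety. Since $G$ is connected, every $g\in G$ is homotopic to the identity, so $g$ acts trivially on cohomology. In particular $g^*\alpha=\alpha$ and $g^*\beta=\beta$, and the projection formula gives
\[
\mu_{\alpha,\beta}(gV)=\mu_{\alpha,\beta}(V).
\]
The degree is also preserved, because $gV$ and $V$ are homologous. Hence $gV$ is optimal and lies in the same component $\operatorname{Chow}^{\mathrm{irr}}_{p,d}(X)$.

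The second step uses that $G$ is a connected, hence irreducible, algebraic group. Its orbit $G\cdot[V]$ is therefore an irreducible constructible subset of $\operatorname{Chow}^{\mathrm{irr}}_{p,d}(X)$ containing $[V]$. Corollary~\ref{cor:projective-cycle-rigidity} says that $[V]$ is isolated in $\operatorname{Chow}^{\mathrm{irr}}_{p,d}(X)$. If the orbit were positive-dimensional, $[V]$ would lie in its closure as a non-isolated point, which is a contradiction. So the orbit is zero-dimensional, and since it is irreducible it consists of the single point $[V]$. This means $[gV]=[V]$ for all $g\in G$. Because $V$ is reduced, equality of cycles gives equality of supports, so $gV=V$ and $V$ is $G$-invariant.

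The main obstacle is the bookkeeping for the induced action. One needs the orbit map $G\to\operatorname{Chow}_{p,d}(X)$ to be a morphism, so that the orbit is irreducible and constructible and ``isolated'' can be applied. One also needs an irreducible reduced cycle to be determined by its support. I would cite Koll\'ar's functoriality for the first point and treat the second as immediate from the definition of the fundamental cycle. An alternative is to bypass the orbit argument: take a one-parameter subgroup or a curve in $G$ through the identity, push $V$ along it, compactify via the projective Chow variety, and apply Theorem~\ref{thm:threshold-rigidity} directly. This essentially repeats the proof of Corollary~\ref{cor:projective-cycle-rigidity}, so I would prefer the orbit argument.
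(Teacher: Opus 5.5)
Your proposal is correct and follows essentially the same route as the paper. Connectedness of $G$ gives a trivial action on cohomology, hence $\mu_{\alpha,\beta}(gV)=\mu_{\alpha,\beta}(V)$; the orbit $G\cdot[V]$ in the Chow variety is then forced to be a single point by the isolation statement of Corollary~\ref{cor:projective-cycle-rigidity}. Your explicit use of irreducibility and constructibility of the orbit, to see that $[V]$ is non-isolated in a positive-dimensional orbit, is a slightly more careful version of the paper's ``connected algebraic subset'' step (note that $\operatorname{Chow}^{\mathrm{irr}}_{p,d}(X)$ is just the locus of irreducible reduced cycles, not a component).
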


\begin{proof}
Since $G$ is connected, it acts trivially on cohomology. Indeed, the
homomorphism
\[
G\longrightarrow
\operatorname{GL}\bigl(H^\bullet(X,\mathbb Z)\bigr)
\]
is continuous, whereas its target is discrete, and is therefore
constant. Consequently,
\(
g^*\alpha=\alpha,
\)
and
\(
g^*\beta=\beta
\)
for every $g\in G$.

Let $V$ be a $p$-dimensional optimal subvariety. For every $g\in G$,
\[
\begin{aligned}
\mu_{\alpha,\beta}(gV)
&=
p\frac{\int_{gV}\beta\wedge\alpha^{p-1}}
        {\int_{gV}\alpha^p}
\\
&=
p\frac{\int_Vg^*\beta\wedge(g^*\alpha)^{p-1}}
        {\int_V(g^*\alpha)^p}
=
\mu_{\alpha,\beta}(V).
\end{aligned}
\]
Thus $gV$ is again optimal.

The orbit
\(
G\cdot[V]
\)
is a connected algebraic subset of the relevant Chow variety. If it
were positive-dimensional, then $[V]$ would not be isolated among
irreducible reduced cycles, contradicting
Corollary~\ref{cor:projective-cycle-rigidity}. Hence the orbit is a
single point, and therefore
\(
gV=V
\)
for every $g\in G$.
\end{proof}

\begin{rem}[No invariance assumption on the K\"ahler classes]
\label{rem:no-invariance-assumption}
No additional $G$-invariance assumption on $\alpha$ or $\beta$ is
needed in Corollary~\ref{cor:automorphism-invariance}. Their
invariance in cohomology follows automatically from the connectedness
of $G$. In particular, in the toric application below, neither the
K\"ahler classes nor their chosen representatives need to be assumed
torus-invariant. The argument uses only the cohomology classes and
the associated numerical slopes.
\end{rem}

\subsubsection{Toric varieties}

\begin{cor}[Toric varieties]
\label{cor:toric-rigidity}
Let $X$ be a smooth projective toric manifold with dense algebraic
torus
\(
T\cong(\mathbb C^*)^n.
\)
If $(\star_p)$ holds, then there are only finitely many
$p$-dimensional optimal subvarieties.
In particular, if
\(
\Gamma^{\mathrm{pp}}_\beta(\alpha)=0
\)
and
$
s_\alpha(\beta)>\frac{c}{6},
$
then the set of all optimal destabilizing subvarieties of $X$ is
finite.
\end{cor}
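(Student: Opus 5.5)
The plan is to combine Corollary~\ref{cor:automorphism-invariance} with the finiteness of torus-invariant irreducible subvarieties of a toric manifold. Since $X$ is smooth projective toric, the torus $T\cong(\mathbb C^*)^n$ is a connected algebraic subgroup of $\operatorname{Aut}^0(X)$. By Corollary~\ref{cor:automorphism-invariance}, under $(\star_p)$ every $p$-dimensional optimal subvariety $V$ satisfies $tV=V$ for all $t\in T$. It remains to show that a smooth projective toric manifold has only finitely many $T$-invariant irreducible subvarieties of each dimension.

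\textbf{Finiteness of invariant subvarieties.} By the orbit--cone correspondence, $X$ is a finite disjoint union of $T$-orbits $O(\sigma)$, one for each cone $\sigma$ of the fan. Let $V$ be an irreducible $T$-invariant closed subvariety. Then $V$ is a union of orbits, and $V=\bigcup_\sigma (V\cap O(\sigma))$ is a finite union of locally closed pieces. By irreducibility, $V$ is the closure of exactly one of them, say $V\cap O(\sigma)$. Since $V\cap O(\sigma)$ is nonempty, $T$-invariant and contained in the homogeneous space $O(\sigma)$, it equals $O(\sigma)$. Hence $V=\overline{O(\sigma)}$. So the $T$-invariant irreducible subvarieties are exactly the orbit closures $V(\sigma)$, and there are finitely many of them, namely the number of cones of the fan. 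In particular there are finitely many with $\dim V(\sigma)=n-\dim\sigma=p$. This proves the first assertion.

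\textbf{The boundary case.} Assume $\Gamma^{\mathrm{pp}}_\beta(\alpha)=0$ and $s_\alpha(\beta)>c/6$. As noted before Corollary~\ref{cor:projective-cycle-rigidity}, $\frac{p-1}{p(p+1)}\leq\frac16$ for every $p\geq1$: this is equivalent to $p^2-5p+6\geq0$, i.e.\ $(p-2)(p-3)\geq0$, which holds for all integers $p$. Also $c>0$ because $\alpha,\beta$ are K\"ahler. Therefore $(\star_p)$ holds for every $1\leq p\leq n-1$. Applying the first part in each of these finitely many dimensions shows that the union over $p$ of the $p$-dimensional optimal subvarieties is finite, which is the full set of optimal destabilizing subvarieties.

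\textbf{Main obstacle.} The only step needing care is identifying invariant irreducible subvarieties with orbit closures. That step is resolved by the finite orbit decomposition together with irreducibility, as above. Everything else is a direct application of Corollary~\ref{cor:automorphism-invariance}.
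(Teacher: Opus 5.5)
Your proposal is correct and follows essentially the same route as the paper: apply Corollary~\ref{cor:automorphism-invariance} to the torus, identify $T$-invariant irreducible subvarieties with the finitely many orbit closures, and at the boundary use $\frac{p-1}{p(p+1)}\leq\frac16$ together with $c>0$ to obtain $(\star_p)$ for every $p$. Your orbit-decomposition argument for the identification with orbit closures proves a fact that the paper simply states as standard.
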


\begin{proof}
The torus $T$ is connected, and its action on $X$ gives a connected
algebraic subgroup of $\operatorname{Aut}^0(X)$. By
Corollary~\ref{cor:automorphism-invariance}, every
$p$-dimensional optimal subvariety is
$T$-invariant.

The irreducible $T$-invariant subvarieties of a toric variety are
precisely the closures of torus orbits. Equivalently, they are the
subvarieties associated with the cones of the defining fan. Since
the fan contains only finitely many cones, there are only finitely
many such subvarieties.
At the J-semistable boundary, the inequality
\(
s_\alpha(\beta)>\frac{c}{6}
\)
implies $(\star_p)$ for every $1\leq p\leq n-1$, proving the final
assertion.
\end{proof}

\subsubsection{Abelian varieties}

\begin{cor}[Abelian varieties]
\label{cor:abelian-rigidity}
Let $X$ be an abelian variety. If $(\star_p)$ holds, then $X$ admits
no proper $p$-dimensional optimal subvariety.

In particular, if $ \Gamma^{\mathrm{pp}}_\beta(\alpha)=0 $
and
$ s_\alpha(\beta)>\frac{c}{6},$
then $X$ admits no proper optimal destabilizing subvariety.
\end{cor}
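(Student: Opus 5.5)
The plan is to mirror the proof of Corollary~\ref{cor:toric-rigidity}, replacing the torus by the translation group of $X$. Write $X=\mathbb C^n/\Lambda$ for the abelian variety, and let $G\subset\operatorname{Aut}^0(X)$ be the group of translations $t_a\colon x\mapsto x+a$, $a\in X$. This is a connected algebraic group, isomorphic to $X$ itself, and since $X$ is projective it is a connected algebraic subgroup of $\operatorname{Aut}^0(X)$. Hence all hypotheses of Corollary~\ref{cor:automorphism-invariance} are met. Suppose $V\subsetneq X$ is a proper $p$-dimensional optimal subvariety with $1\leq p\leq n-1$ and $(\star_p)$ in force. Corollary~\ref{cor:automorphism-invariance} then gives $V+a=V$ for every $a\in X$.

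The key step is the elementary observation that a nonempty subset of $X$ stable under all translations is all of $X$. Pick $v\in V$; for any $x\in X$ we have $x=v+(x-v)\in V+(x-v)=V$. So $V=X$, which contradicts properness (and also $\dim V=p\leq n-1<n$). This proves the first assertion.

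For the second assertion, assume $\Gamma^{\mathrm{pp}}_\beta(\alpha)=0$ and $s_\alpha(\beta)>c/6$. As recorded before Corollary~\ref{cor:projective-cycle-rigidity}, the bound $\frac{p-1}{p(p+1)}\leq\frac16$ shows that $(\star_p)$ holds for every $1\leq p\leq n-1$. Every proper optimal destabilizing subvariety is positive-dimensional of some dimension $p$ in this range, so the first part excludes each of them.

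There is no serious obstacle here, since the rigidity input is entirely contained in Theorem~\ref{thm:threshold-rigidity} and Corollary~\ref{cor:automorphism-invariance}. The only point to check is that the translation group is a connected \emph{algebraic} subgroup acting algebraically, so that it induces an action on $\operatorname{Chow}_{p,d}(X)$ as described in the preliminaries. This is standard, since the action is the group law $X\times X\to X$ restricted appropriately.
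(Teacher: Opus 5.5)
Your proposal is correct and matches the paper's proof essentially step for step. You apply Corollary~\ref{cor:automorphism-invariance} to the connected translation group of $X$, then translate a point $v\in V$ by $x-v$ to force $V=X$, which contradicts properness. The second assertion then follows from $\frac{p-1}{p(p+1)}\leq\frac16$, which makes $s_\alpha(\beta)>c/6$ imply $(\star_p)$ for every $1\leq p\leq n-1$ at the boundary.
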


\begin{proof}
The abelian variety $X$ acts on itself by translations. This is a
connected algebraic subgroup of $\operatorname{Aut}^0(X)$, and hence
Corollary~\ref{cor:automorphism-invariance} implies that every
$p$-dimensional optimal subvariety $V$ would
be invariant under every translation.

Choose a point $v\in V$. Given any $x\in X$, translation by $x-v$
maps $v$ to $x$. Translation-invariance of $V$ would therefore imply
$x\in V$. Hence $V=X$, contradicting the assumption that $V$ is
proper.

As before, the final assertion follows because
$s_\alpha(\beta)>c/6$ implies $(\star_p)$ in every dimension at the
J-semistable boundary.
\end{proof}

\begin{rem}
Thus, whenever the relevant nef-threshold condition holds,
optimal subvarieties are not merely
numerically distinguished: in the projective setting they are
isolated cycles and are fixed by every connected algebraic group
preserving the underlying geometric data. In particular, under the
uniform boundary condition $(\star_{\mathrm{all}})$, any failure of
finiteness would have to come from infinitely many isolated
subvarieties rather than from a moving family.
\end{rem}

\end{document}